\title{Deformation Quantization for Supermanifolds via Gelfand-Kazhdan Descent}
\date{}
\author{Araminta Amabel}
\begin{document}

\begin{abstract}
We construct a canonical deformation quantization for symplectic supermanifolds.  
This gives a novel proof of the super-analogue of Fedosov quantization.
Our proof uses the formalism of Gelfand-Kazhdan descent, 
whose foundations we establish in the super-symplectic setting. 
\end{abstract}
 
\maketitle
\tableofcontents

\section{Introduction}

Given a symplectic manifold $(M,\omegans)$, 
it is a classic question to ask whether there exists a deformation of the algebra of functions $\mathcal{O}_M$ compatible with the symplectic form $\omegans$. 
The space of such deformations was first described independently by De Wilde-Lecomte \cite{DWL} and Fedosov \cite{Fedosov}. 
This result was extended by Kontsevich to apply to all Poisson manifolds, \cite{Kontsevich}.

Here, we give a new proof of the super-analogue of Fedosov's quantization result, 
showing that for symplectic supermanifolds there exists a deformation quantization. 
A symplectic supermanifold is a supermanifold $\bb{M}$ together with an even, closed, nondegenerate 2-form $\omega$ on $\bb{M}$, 
(Definition \ref{def-sympsuper}). 
In particular, we work with \emph{even} symplectic supermanifolds. 

Fedosov's quantization of a (non-super) symplectic manifold $(M,\omegans)$ requires the data of a symplectic connection on $M$. 
In our formulation, for a symplectic supermanifold $(\bb{M},\omega)$, 
the connection data is replaced with an \emph{$\hbar$-formal exponential} 
(Definitions \ref{def-FormalExponential} and \ref{def-hExp}).

\begin{thm}\label{thm-1}
Let $(\bb{M},\omega,\sigma)$ be a symplectic supermanifold with an $\hbar$-formal exponential $\sigma$. 
Then there exists a canonical deformation quantization $\cal{A}_\sigma(\bb{M})$ of the Poisson superalgebra $\mathcal{O}_\bb{M}$. 
\end{thm}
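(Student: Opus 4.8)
The strategy is to realize $\cal{A}_\sigma(\bb{M})$ as the Gelfand--Kazhdan descent of the formal Weyl superalgebra along the formal exponential $\sigma$, and then to check directly that this descent satisfies the three axioms of a deformation quantization: flatness over $\bb{C}[[\hbar]]$, recovery of $\mathcal{O}_\bb{M}$ modulo $\hbar$, and agreement of the first-order commutator with the Poisson bracket of $\omega$.

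First I would fix the local model. Let $\widehat{\bb{D}}$ be the formal symplectic super-polydisk carrying its standard constant even symplectic form $\omega_0$, and let $\mathcal{W}$ denote the associated \emph{formal Weyl superalgebra}: the $\bb{C}[[\hbar]]$-module $\mathcal{O}_{\widehat{\bb{D}}}[[\hbar]]$ equipped with the Moyal--Weyl star product $\star_\hbar$ quantizing the Poisson bracket of $\omega_0$. Associativity of $\star_\hbar$ is formal; the only care needed is the bookkeeping of Koszul signs coming from the odd coordinates. The essential structural input is that $\mathcal{W}$ is an \emph{associative algebra object in $(\mathfrak{g},K)$-modules}, where $(\mathfrak{g},K)$ is the Harish--Chandra pair governing formal super-symplectic geometry: $K$ is the structure group preserving $\omega_0$ and $\mathfrak{g}$ the Lie superalgebra of formal symplectic vector fields on $\widehat{\bb{D}}$, acting on $\mathcal{W}$ through the canonical inner action obtained by dividing the $\star_\hbar$-commutator by $\hbar$. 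Verifying that this action is well defined — that the relevant infinite sums converge in the appropriate complete topologies and that all signs are consistent — is the place where the super case genuinely differs from the classical one.

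Next I would apply descent. By hypothesis the data of a formal exponential $\sigma$ on $(\bb{M},\omega)$ is exactly (Definition \ref{def-FormalExponential}) a principal $(\mathfrak{g},K)$-bundle over $\bb{M}$ together with a flat connection, the super-symplectic avatar of the Fedosov connection. Feeding $\mathcal{W}$ through the Gelfand--Kazhdan descent functor set up earlier in the paper produces a sheaf $\cal{A}_\sigma(\bb{M})$ on $\bb{M}$; since that functor is symmetric monoidal, and in particular carries algebra objects to sheaves of algebras, $\cal{A}_\sigma(\bb{M})$ is a sheaf of associative $\bb{C}[[\hbar]]$-algebras. This is the candidate quantization, and it is canonical in $\sigma$ because no further choices have entered the construction.

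Finally I would verify the axioms. Flatness over $\bb{C}[[\hbar]]$ follows from flatness of $\mathcal{W}$ together with exactness of descent. Reduction modulo $\hbar$ commutes with descent, and $\mathcal{W}/\hbar\mathcal{W}=\mathcal{O}_{\widehat{\bb{D}}}$ descends to $\mathcal{O}_\bb{M}$ — this is precisely the Gelfand--Kazhdan reconstruction statement that a formal exponential recovers the structure sheaf of $\bb{M}$ — so $\cal{A}_\sigma(\bb{M})/\hbar\,\cal{A}_\sigma(\bb{M})\cong\mathcal{O}_\bb{M}$. And $\tfrac{1}{\hbar}[\,\cdot\,,\,\cdot\,]\bmod\hbar$ on $\mathcal{W}$ is the Poisson bracket of $\omega_0$, which monoidality of descent carries to the Poisson bracket of $\omega$ on $\mathcal{O}_\bb{M}$. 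I expect the main obstacle to be the first step: showing that the formal Weyl superalgebra is honestly a Harish--Chandra algebra for the correct super-symplectic pair, with convergence and signs under control. Once that is in place, the descent machinery of the earlier sections does the rest essentially formally.
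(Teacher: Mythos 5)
Your proposal follows the paper's proof in all essentials: quantize the formal super-disk by a Moyal-type star product on the Weyl--Clifford algebra $\AQ$ (Definition \ref{def-AQ}, Proposition \ref{prop-AQisBD}), regard it as an algebra object in modules over the super-symplectic Harish--Chandra pair $(\mfrk{g}_{2n\vert a,b},\mrm{Sp}(2n\vert a,b))$, push it through the (lax) symmetric monoidal Gelfand--Kazhdan descent functor determined by $\sigma$, identify the $\hbar=0$ reduction with $\mathcal{O}_\bb{M}$ via the jet-bundle reconstruction (Lemma \ref{lem-DescO}), and let monoidality carry the commutator to the Poisson bracket (Lemma \ref{lem-DefQuantDescends}). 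The only notable cosmetic difference is that you phrase the output as a sheaf of algebras where the paper works with flat (pro-)bundles and their de Rham complexes.

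The one substantive place you diverge is in how $\mfrk{g}_{2n\vert a,b}$ is made to act on the Weyl superalgebra, and your choice is the harder one. You propose the inner action $v\mapsto \tfrac{1}{\hbar}[h_v,-]_\star$, where $h_v$ is a Hamiltonian of $v$. But for the Moyal product one has $\tfrac{1}{\hbar}[h_1,h_2]_\star=\{h_1,h_2\}+O(\hbar^2)$, with the $O(\hbar^2)$ term nonzero for cubic and higher Hamiltonians, so this assignment is \emph{not} a Lie superalgebra homomorphism out of the formal symplectic vector fields with their usual bracket; it is an action of a deformation of $\mfrk{g}_{2n\vert a,b}$. Repairing this is exactly the complication that leads Bezrukavnikov--Kaledin to work with the larger pair $(\mrm{Der}(D),\mrm{Aut}(D))$, and it is why Fedosov's construction needs a recursive correction of the connection. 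The paper sidesteps it entirely (Lemma \ref{lem-QuantHC}): one extends the \emph{geometric} action of $\mfrk{g}_{2n\vert a,b}$ on $\widehat{\mathcal{O}}_{2n\vert a,b}$ $\hbar$-linearly to $\AQ$, and this already acts by derivations of $\star$ because formal symplectic vector fields preserve the constant bivector $\tilde{\alpha}$ appearing in the Moyal exponential. With that simpler choice of action, the ``main obstacle'' you flag dissolves and your argument coincides with the paper's.
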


\noindent This is Theorem \ref{thm-SuperFedosov} below. 
For $(M,\omega)$ a non-super symplectic manifold, 
a symplectic connection on $M$ determines an $\hbar$-formal exponential. 
In this case, our theorem recovers Fedosov's canonical deformation. 
We discuss the relationship between the deformation quantization in Theorem \ref{thm-1} and the space of all deformation quantizations in Remark \ref{rmk-BKours}.

A \emph{deformation quantization} of a Poisson $\bb{k}$-algebra $(A,\{-,-\})$ 
(such as $\bb{k}$-valued functions on a symplectic manifold) 
is an associative $\bb{k}[[\hbar]]$-algebra $(A_\hbar,\star)$ with a $\bb{k}[[\hbar]]$-module isomorphism $A_\hbar\simeq A[[\hbar]]$ such that 
\begin{itemize}
\item for all $f,g\in A_\hbar$, we have 
\[f\star g=fg+\hbar B_1(f,g)+\hbar^2 B_2(f,g)+\cdots\]
 for bidifferential operators $B_i(-,-)$, and 

\item if $f,g\in A$, then $\frac{1}{\hbar}[f,g]=\{f,g\}\mod{\hbar}$. 
\end{itemize}

The simplest example of a symplectic manifold is the cotangent bundle $T^*\bb{R}^n$. 
The cotangent bundle has a canonical quantization given by the Weyl algebra (Definition \ref{def-WeylAlgebra}). 
By Darboux's Lemma, symplectic manifolds all locally look like $T^*\bb{R}^n$ for some $n$. 
Production of a deformation quantization on a general symplectic manifold usually proceeds by trying to globalize from the Weyl algebra on a Darboux chart. 
For example, given a manifold $X$, 
the canonical deformation quantization of $T^*X$ given by the Rees algebra of differential operators on $X$ can be produced locally using the Weyl algebra.

There is a similar story for symplectic supermanifolds, 
but the local structure in the odd direction has additional freedom. 
Locally, a symplectic supermanifold is specified by what we call it's \emph{type}: 
a triple of numbers $(2n\vert a,b)$ where 
there are $2n$ even dimensions, 
$a+b$ odd dimensions, 
and the symplectic structure in the odd direction comes from a quadratic form $Q$ of signature $(a,b)$. 
The canonical local quantization is then a tensor product of Weyl and Clifford algebras, 

\[\mathcal{A}(\bb{R}^{2n\vert r})=\mrm{Weyl}(T^*\bb{R}^n)\otimes\mrm{Cliff}(\bb{R}^r,Q).\]

\subsubsection{Comparison to other Deformation Quantizations}

There have been many approaches to globalizing the canonical choice of local deformation quantization \cite{Fedosov,Deligne,Tomassini,FFSh,Willwacher,BK}.
In the super-case Bordemann \cite{Bordemann1,Bordemann2} constructed a deformation quantization for symplectic supermanifolds using Fedosov's approach. 
A similar result also appears in \cite{Engeli} using the methods of \cite{FFSh}. 
Our method of proof is similar, and inspired by, 
the methods used by Bezrukavnikov and Kaledin in the non-super case \cite{BK}. 
The formalism in \cite{BK} that we mimic here also works in the algebraic setting, 
and has even been extended to positive characteristic \cite{BK2}. 

Here, we globalize the local quantization and prove Theorem \ref{thm-1}, 
using techniques in formal geometry, first described by Gelfand-Kazhdan in \cite{GK},  
called \emph{Gelfand-Kazhdan descent}. 
This is a special case of Harish-Chandra descent. 
Roughly speaking, this is a fancy version of the Borel construction that takes into account the connection data on the formal coordinate bundle (\cite[\S 3.1]{BK}, \cite[\S 4.2]{Jordan}, or \S \ref{subsubsec-FormalCoord} in the super-symplectic case below). 
We develop Gelfand-Kazhdan descent for symplectic supermanifolds in \S\ref{subsec-Descent}. 
Gelfand-Kazhdan descent is also used in a more modern computation of 
the Witten genus coming from the factorization algebra of chiral differential operators \cite{GGW}. 
One benefit of using Gelfand-Kazhdan descent here is to make connections to Feynmann diagram computations in the BV formalism (as in \cite{GGW}) more accessible, see \S\ref{subsec-physics}. 

\begin{rmk}\label{rmk-BKours}
Essentially, we construct deformation quantizations locally on a formal disk and use gluing data $\msf{Glue}$ to descend to a deformation quantization on the whole manifold. 
In \cite{BK}, they classify the set $Q(M,\omegans)$ 
of deformation quantizations of a (non-super) symplectic $2n$-manifold $(M,\omegans)$ up to isomorphism using these techniques, \cite[Lem. 3.4]{BK}. 
This is done by describing the set of all possible gluing datum, 
which involves considering the unwieldy pro-group $\mrm{Aut}(\mrm{Weyl}_{2n})$ of automorphisms of the Weyl algebra, 
\[\msf{Glue}_{\mrm{Aut}(\mrm{Weyl}_{2n})}\xrta{\sim} Q(M,\omegans)\]
Here, we instead restrict the gluing datum to be linear. 
In the purely even case, this corresponds to requiring the data to come from the symplectic group $\mrm{Sp}(2n)$. 
We get a factoring of the equivalence from \cite{BK}, 
\[\begin{xymatrix}
{
\msf{Glue}_{\mrm{Aut}(\mrm{Weyl}_{2n})}\arw^{\simeq}[r] & Q(M,\omegans)\\
\msf{Glue}_{\mrm{Sp}(2n)}\arw[u]\arw[ur] & 
}
\end{xymatrix}\]

The space $\msf{Glue}_{\mrm{Sp}(2n)}$ is equivalent to the space of $\hbar$-formal exponentials (Remark \ref{rmk-BKours2}), 
which is contractible, Lemmas \ref{lem-FiberContractible} and \ref{lem-FiberContractible2}. 
In summary, the added rigidity produces a contractible space of gluing data, 
and hence an essentially unique deformation quantization. 
See also Remark \ref{rmk-BKours3}.
\end{rmk}

\subsection{Motivation and Broader Perspective}

Our present work is motivated by a larger program to relate genera to partition functions of field theories.
In \S\ref{subsec-invariants}, we give a zoomed-out look at how this paper relates to  manifold invariants of interest. 
The relationship between Fedosov quantizations and algebraic index theorems is discussed in \S\ref{subsec-Index}. 
Lastly, in \S\ref{subsec-physics}, 
we discuss the physical interpretation of this broader picture. 
Studying the questions raised here is ongoing joint work with Owen Gwilliam and Brian Williams.

\subsubsection{Motivation: Manifold Invariants}\label{subsec-invariants}

Let $\msf{sMfld}^\mrm{Sp}$ denote the category of symplectic supermanifolds, see Definition \ref{def-sympsuper} below. 
There is a category $\msf{sGK}^{=,\hbar}$ (Variation \ref{var-sgk=h}), 
fibered over $\msf{sMfld}^\mrm{Sp}$, 
of pairs $(\bb{M},\sigma)$ of a symplectic supermanifold and an $\hbar$-formal exponential; 
that is, the necessary input data for Theorem \ref{thm-1}. 
Roughly speaking, Theorem \ref{thm-1} provides a lift to the functor of $\bb{R}$-valued smooth functions, 

\[\begin{xymatrix}
{
\left(\msf{sGK}^{=,\hbar}\right)^\mrm{op}\arw[d]\ar@{-->}[rr]^-{\mathcal{A}_{(-)}} & & \msf{Mod}_{\Omega^\bullet}(\msf{sAlg}_{\bb{R}[[\hbar]]})\arw[d]^{\hbar=0}\\
\left(\msf{sMfld}^\mrm{Sp}\right)^\mrm{op}\arw[rr]_-{\mathcal{O}_{(-)}} & & \msf{Mod}_{\Omega^\bullet}(\msf{sAlg}_{\bb{R}})
}
\end{xymatrix}\]

As mentioned above, given a manifold $X$ and a quadratic vector bundle $E\rta X$ with compatible connection, 
one can produce an even symplectic structure on the supermanifold $(\pi^*E)[1]$, 
where $\pi\colon T^*X\rta X$ is the projection map. 
We obtain a functor 

\[L_X\colon\msf{VB}_{/X}^{\mrm{quad},\nabla}\rta\left(\msf{sMfld}^\mrm{Sp}\right)^\mrm{op},\]

where $\msf{VB}_{/X}^{\mrm{quad},\nabla}$ (Example \ref{ex-Rothstein}) is the category with

\begin{itemize}

\item objects: triples $(E,g,\nabla)$ of vector bundles over $X$, 
equipped with a quadratic form and a compatible connection, and  

\item morphisms: a morphism $(E,g,\nabla)\rta(E',g',\nabla')$ is a map of vector bundles $f\colon E\rta E'$ that is a fiberwise isomorphism, intertwines the quadratic forms, and so that $f^*\nabla'=\nabla$. 
\end{itemize} 

Just as the cotangent bundle of an ordinary manifold has a canonical quantization,  
we will construct a deformation quantization for symplectic supermanifolds coming from $\msf{VB}_{/X}^{\mrm{quad},\nabla}$, see Remark \ref{rmk-VBsGK} and Lemma \ref{lem-Connhbar}. 
This is done by constructing a lift of $L_X$ to $\msf{sGK}^{=,\hbar}$,

\[\begin{xymatrix}
{
& \left(\msf{sGK}^{=,\hbar}\right)^\mrm{op}\arw[d]\\
\msf{VB}_{/X}^{\mrm{quad},\nabla}\arw[r]_-{L_X}\ar@{-->}[ur]^{\tilde{L}_X} & \left(\msf{sMfld}^\mrm{Sp}\right)^\mrm{op}
}
\end{xymatrix}\]

Composing the lift $\tilde{L}_X$ with the deformation quantization functor $\mathcal{A}$ over $T^*X$, 
we obtain a functor (Remark \ref{rmk-LiftDefo})

\[\widetilde{A}_X\colon \msf{VB}_{/X}^{\mrm{quad},\nabla}\rta\msf{Mod}_{\Omega^\bullet_{T^*X}}(\msf{sAlg}_{\bb{R}[[\hbar]]}).\]

Further post-composing with the Hochschild cohomology functor 
$\mrm{HH}^\bullet_{\bb{R}[[\hbar]]}(-;-^\vee)$, 
we obtain a functor 

\[I_X\colon\msf{VB}_{/X}^{\mrm{quad},\nabla}\rta\msf{Mod}_{\Omega^\bullet_{T^*X}}(\msf{Ch}_{\bb{R}[[\hbar]]}).\]

\begin{quest}
What invariant of quadratic vector bundles on $X$ does $I_X$ produce?
\end{quest}

One well-studied invariant of quadratic vector bundles is the \emph{Witt group}, \cite{Witt}. 
It is natural to ask how $I_X$ and $\mrm{Witt}(X)$ are related. 
In particular, the Witt group is obtained by quotienting by the hyperbolic quadratic forms. 
Since Hochschild (co)homology is invariant under Morita equivalence, 
one might expect that $\widetilde{A}_X$ sends vector bundles with hyperbolic quadratic forms to Morita trivial algebras.

\begin{quest}
How does $I_X$ behave under stabilization by vector bundles with hyperbolic quadratic forms? 
In particular, does $\widetilde{A}_X$ send hyperbolic vector bundles to Morita trivial superalgebras?
\end{quest}  

\begin{ex}
The answer to this question is ``yes" when $X$ is a point. 
In this case, we are considering the functor from vector spaces equipped with a quadratic form to superalgebras. 
The functor $\widetilde{A}_\mrm{pt}$ sends a quadratic vector space $(V,Q)$ to the Clifford algebra $\mrm{Cliff}(V,Q)$. 
When $Q$ is hyperbolic, the Clifford algebra is equivalent to a matrix algebra via the spinor representation, 
and hence is Morita trivial. 
\end{ex}

The Witt group $\mrm{Witt}(X)$ is closely related to the (quadratic) L-groups, $\bb{L}(X)_\bullet$, \cite{RanickiAlg}. 
The L-groups are the natural home for the signature of $X$. 
As noted below (\S\ref{subsec-Index}), we expect a super-version of the algebraic index theorem (\cite{Engeli}) applied to certain oriented vector bundles over $X$ to recover the L-genus. 
There are also indications in the literature \cite{Berwick-Evans} that the 1d AKSZ theory relevant to 
$\widetilde{A}_X$ 
has partition function related to the L-genus (\S\ref{subsec-physics}). 
The invariant $I_X$ constructed here should therefore lead to interesting connections between super deformation quantization and the L-genus. 

\subsubsection{Motivation: Index Theory}\label{subsec-Index}
An essential invariant of a differential operator is its \emph{index}. 
One can ask how much the deformation $\mrm{Rees}(\mrm{Diff}_X)$ of $T^*X$ knows about the topology of $X$. 
Famously, Atiyah and Singer \cite{AtiyahSinger} 
proved that the (analytic) index of an elliptic differential operator $\mathcal{D}$ on $X$ is equivalent to its topological index.
Bressler, Nest, and Tsygan have proven an algebraic index theorem \cite{Nest-Tsygan, FFSh} using deformation theory. 
The algebraic index theorem, 
equips the Fedosov quantization of $(M,\omega,\nabla)$ (a symplectic manifold with symplectic connection $\nabla$) with an interesting trace map $\mrm{Tr}_M$, and then gives a description of the trace evaluated at 1 involving known topological invariants,

\[\mrm{Tr}_M(1)=\int_M\hat{A}(TM)\mrm{exp}(-\mrm{char}(\nabla)/\hbar).\footnote{There is a scalar term here, which depends on a normalization condition. Here $\mrm{char}(\nabla)$ is the \emph{characteristic class} of the deformation. See \cite[\S 4]{FFSh}.}\] 

In \cite{Engeli}, Engeli proves a generalization of the algebraic index theorem of Bressler-Nest-Tsygan \cite{Nest-Tsygan, FFSh} for certain symplectic supermanifolds of type $(2n\vert n,n)$.  
In Engeli's result \cite[Thm. 2.26]{Engeli}, 
one sees an invariant closely related to the multiplicative sequence for the L-genus  replacing the $\hat{A}$-genus in the non-super version. 
Our techniques of super-Gelfand-Kazhdan descent could be used to reproduce and generalize Engeli's super algebraic index result. 
See \S\ref{subsec-invariants} for more discussion along these lines. 

\subsubsection{Motivation: Quantum Field Theory}\label{subsec-physics}

The deformation quantization of $T^*\bb{R}^n$ is the Weyl algebra. 
In quantum mechanics, this is the algebra of observables of a free bosonic system.
The super-version, Theorem \ref{thm-1}, corresponds to adding fermions. 
The resulting Weyl-Clifford algebra is the algebra of local observables of suspersymmetric quantum mechanics. 

One can think of globalizing as going from the AKSZ theory for the formal super-disk to the theory for the symplectic supermanifold $\bb{M}$. 
On BV fields this is a process
\[\mrm{Maps}(S^1,\hat{\bb{D}}^{2n\vert r})\rightsquigarrow\mrm{Maps}(S^1,\bb{M}).\]

In \cite{GwilliamGrady}, Gwilliam and Grady construct 1d Chern-Simons theory in the BV formalism following Costello-Gwilliam \cite{CG1,CG2}. 
This 1d theory has quantum observables that agree with the Fedosov quantization of $T^*X$.  
We expect a super-analogue to \cite{GwilliamGrady} to show that 
the super-Fedosov quantization from Theorem \ref{thm-1} 
appears as the observables of supersymmetric quantum mechanics. 
Gelfand-Kazhdan descent for factorization algebras of observables has been developed in \cite{GGW}. 
Assuming one uses these descent techniques to describe supersymmetric quantum mechanics in the BV formalism, 
our proofs of Theorem \ref{thm-1} below 
make one well-positioned to compare the algebraic and physical constructions. 
Such a comparison for 1d Chern-Simons theory is made in \cite{GradyLiLi,newSiLi}. 

\subsection{Linear Overview}

We give a brief overview of the structure of this paper. 

In \S\ref{sec-Review}, we review the basics of symplectic supermanifold, 
including Rothstein's analogue \cite{Rothstein} of Batchelor's structure theorem for supermanifolds \cite{Batchelor}. 
We define super-Harish-Chandra pairs and construct the particular example of such that we will use for our descent in \S\ref{subsec-HCpair}. 
Our descent functor is defined in \S\ref{subsec-Descent}, 
where we also give a few first examples of how descent works. 
In \S\ref{subsec-Descent}, we also prove several monoidal properties of our super-Gelfand-Kazhdan descent functor. 
The formalism developed in \S\ref{sec-GKDescent} is used in \S\ref{sec-DefQuant} and \S\ref{subsec-New} to show that Gelfand-Kazhdan descent takes deformation quantizations to deformation quantizations. 
In \S\ref{sec-SuperFedosov}, we prove Theorem \ref{thm-1}, giving a deformation quantization of a symplectic supermanifold. 
We then describe the deformation quantization in terms of Weyl and Clifford algebras in \S\ref{subsec-WeylClifford}. 

\subsection{Conventions}
We set the following conventions for the paper. 

\noindent\textit{Algebra Conventions}.
\begin{itemize}
\item Let $\bb{k}$ be either $\bb{R}$ or $\bb{C}$
\item $K$ will be a Lie supergroup
\item $\mfrk{g}$ is a Lie superalgebra 
\end{itemize}

\noindent\textit{Manifold Conventions}.

All manifolds are real (i.e., not complex), smooth and without boundary. 
A \emph{manifold with boundary} is a manifold with, possibly empty, boundary. 
We use the phrase ``ordinary manifold" to distinguish from a supermanifold.
\begin{itemize}
\item $X$ will denote an ordinary manifold
\item $\bb{X}$ will denote a supermanifold
\item $(M,\omegans)$ will denote a ordinary symplectic manifold
\item $(\bb{M},\omega)$ will denote a symplectic supermanifold
\item $\mathcal{O}_Y$ denotes smooth $\bb{k}$-valued functions on $Y$
\item $\widehat{\bb{D}}^{2n\vert r}$ is the formal super-disk of dimension $2n\vert r$ whose ring of functions is 
\[\mathcal{O}_{\widehat{\bb{D}}^{2n\vert r}}=\bb{k}[[p_1,\dots,p_n,q_1,\dots,q_n,\theta_1,\dots,\theta_r]].\]
\item Given a $\bb{k}$-vector space $V$, the trivial vector bundle on $\bb{X}$ with fiber $V$ is denoted $\underline{V}_\bb{X}$.
\end{itemize}

\noindent Further conventions are explained later, see Convention \ref{conv-HC}.

\subsection{Acknowledgements}
We thank Owen Gwilliam and Brian Williams for introducing us to the field, 
and for numerous helpful discussions, 
both in mathematical content and inspiration. 
Additional thanks are due to Dan Berwick-Evans for useful conversations, and to our advisor Michael Hopkins for help along the way. 
Thanks to Bertram Arnold for pointing out a mistake in applying Lemma 4.10 from a previous version of this paper. 

The author was supported by NSF Grant No. 1122374 while completing this work.

\section{Review of Symplectic Supermanifolds}\label{sec-Review}
We review the basics of symplectic supermanifolds that we will use below. 
For more comprehensive discussions of supermanifolds, see \cite{Batchelor, Leites, Rogers}.

\begin{defn}
A \emph{supermanifold} is a $\bb{Z}/2$-graded ringed space $\bb{X}$ 
whose underlying space $X$ is an $n$-manifold and such that the \emph{ring of smooth functions} $\mathcal{O}_\bb{X}$ is locally isomorphic to 
\[\mathcal{O}_{\bb{R}^n}\otimes\Lambda[\theta_1,\dots, \theta_r]\]
for some $r$. 
A \emph{morphism of supermanifolds} $\bb{X}\rta\bb{Y}$ is a graded map 
$\cal{O}_\bb{Y}\rta\cal{O}_\bb{X}$ living over a smooth map $X\rta Y$.
\end{defn}

We will let $\msf{sMfld}_{n\vert r}$ denote the category of supermanifolds 
with $n$ even dimensions and $r$ odd dimensions.

\begin{ex}[Ordinary manifolds as supermanifolds]
Let $X$ be an ordinary (i.e., not super) $n$-manifold. 
We can regard $X$ as a supermanifold with $0$ odd directions.
\end{ex}

\begin{ex}
We let $\bb{R}^{n\vert r}$ denote the supermanifold with underlying manifold $\bb{R}^n$
and functions
\[\cal{O}_{\bb{R}^{n\vert r}}=\bb{R}[x_1,\dots,x_n]\otimes\Lambda[\theta_1,\dots,\theta_r].\]
\end{ex}

\begin{ex}[Batchelor's theorem]
Let $X$ be an ordinary $n$-manifold and $E\rta X$ a rank $k$ vector bundle on $X$.
One can form a supermanifold $E[1]$ with underlying ordinary manifold $X$ and functions
\[\cal{O}_{E[1]}=\Gamma(X,\Lambda^\bullet E^\vee).\]
For the tangent bundle $TX\rta X$, we use the notation $T[1]X$, 
which has $\cal{O}_{T[1]X}=\Omega^\#_X$,
the underlying $\bb{Z}/2$-graded vector space of the de Rham complex. 
The notation $\Pi E$ is sometimes used for $E[1]$. 
By Batchelor's theorem \cite[\S 3]{Batchelor}, 
every supermanifold is noncanonically isomorphic to one of the form $E[1]$.
\end{ex}

\begin{defn}\label{def-VFsympsuper}
Given a supermanifold $\bb{X}$, \emph{vector fields} on $\bb{X}$ is the Lie superalgebra
\[\mrm{Vect}(\bb{X})\colon=\mrm{Der}(\cal{O}_\bb{X})\]
of graded derivations.

\emph{Forms of degree $k$} on $\bb{X}$ is the space
\[\Omega^k_\bb{X}\colon=\Lambda^k(\mrm{Vect}(\bb{X})^\vee).\]

The \emph{de Rham complex} of $\bb{X}$ is $\Omega^\bullet_\bb{X}$ with differential defined to be the derivation of bidegree $(1,\text{even})$ which locally on generators $x_i,\theta_j,dx_i,d\theta_j$ is given by $d(x_i)=dx_i$, $d(\theta_i)=d\theta_i$, $d(dx_i)=0$, and $d(d\theta_i)=0$.

\end{defn}

\noindent Note that $\Omega^\bullet_\bb{X}$ inherits a $\bb{Z}/2$-grading, 
so we can speak of even and odd forms on $\bb{X}$.

\begin{defn}\label{def-sympsuper}
A \emph{symplectic supermanifold} is a pair $(\bb{M},\omega)$ where $\bb{M}$ is a supermanifold and 
$\omega$ is an even, closed, non-degenerate 2-form on $\bb{M}$.

A \emph{symplectomorphism} $\psi\colon(\bb{M},\omega)\rta(\bb{M}',\omega')$ is
a morphism of supermanifolds $\bb{M}\rta\bb{M}'$ that is a diffeomorphism on underlying manifolds
and so that $\psi^*(\omega')=\omega$.
\end{defn}

We let $\msf{sMfld}_{n\vert r}^\mrm{Sp}$ denote the category of symplectic supermanifolds
with $n$ even dimensions and $r$ odd dimensions.

\begin{ex}[Ordinary symplectic manifolds as symplectic supermanifolds]
Let $(M,\omegans)$ be an ordinary symplectic manifold. 
Viewing $M$ as a supermanifold with 0 odd directions,
$\omegans$ becomes an even 2-form.
Thus $(M,\omegans)$ can be seen as a symplectic supermanifold.
\end{ex}

\begin{ex}\label{ex-LocalSymplecticStructure}
By Darboux's theorem, \cite[Thm. 8.1]{CannasdaSilva}, 
every (ordinary) symplectic manifold $(M,\omegans)$ is locally isomorphic to $(\bb{R}^{2n},\omega_0)$
where, in coordinates $p_1,\dots,p_n,q_1,\dots,q_n$, 
the form $\omega_0$ is
\[\omega_0=\sum_{i=1}^n p_i\wedge q_i.\]
We can similarly give $\bb{R}^{2n\vert r}$ a symplectic structure
but we need to make a choice of $Q=(\ep_1,\dots,\ep_r)$ where $\ep_i\in\{1,-1\}$. 
Given such a $Q$, we can define a symplectic form on $\bb{R}^{2n\vert r}$ by
\[\omega_Q=\sum_{i=1}^n dp_i\wedge dq_i+\sum_{i=1}^r\frac{\ep_i}{2}d\theta_i^2\]
where $\theta_1,\dots,\theta_r$ are the odd coordinates. 
Note that $\omega_Q$ is equivalent to the date of its \emph{signature}, 
that is the number $a$ of positive $\ep_i$ and the number $b$ of negative $\ep_i$. 
We have $a+b=r$. 
\end{ex}

\begin{notation}
We denote the symplectic supermanifold described in Example \ref{ex-LocalSymplecticStructure} by $(\bb{R}^{2n\vert r},\omega_Q)$. 
\end{notation}

\begin{defn}\label{def-symportho}
A \emph{symplectic super vector space} is a super vector space $V$ 
together with a nondegenerate bilinear form $b\colon V\times V\rta\bb{k}$ that is skew-symmetric in the even directions and symmetric in the odd directions, 
\[b(x,y)=(-1)^{|x||y|}b(y,x).\]
\end{defn}

Let $(V,b)$ be a symplectic super vector space of dimension $2n\vert r$. 
Let $Q$ be the quadratic form associated to the nondegenerate bilinear form $b$. 
Analogously to the purely even case \cite[Thm. 1.1]{CannasdaSilva}, there is an isomorphism between $(V,b)$ and $(\bb{R}^{2n\vert r},\omega_Q)$ from Example \ref{ex-LocalSymplecticStructure}. 

\begin{defn}
The {$(2n\vert a,b)$-symplectic group}, denoted $\mrm{Sp}(2n\vert a,b)$, 
is the group of linear symplectomorphisms of $(\bb{R}^{2n\vert r},\omega_Q)$ 
where $Q$ has signature $(a,b)$. 
\end{defn}

When $(a,b)=(r,0)$, this is sometimes called the symplectic-orthogonal group. 

\begin{rmk}\label{rmk-sp2nab}
Define the super-transpose of a block matrix 
\[M=
\begin{bmatrix}
A & B\\
C & D
\end{bmatrix}\]
to be
\[M^{sT}=
\begin{bmatrix}
A^T & -C^T\\
B^T & D^T
\end{bmatrix}.\]
Let $G$ denote the diagonal matrix $G=\mrm{diag}(\ep_1,\dots,\ep_r)$ where $Q=(\ep_1,\dots,\ep_r)$. 
If $\mrm{GL}(2n\vert r)$ denotes the general linear supergroup, 
then $\mrm{Sp}(2n\vert a,b)\subset\mrm{GL}(2n\vert r)$ consists of those matrices $M$ so that
\[M^{sT}HM=H\]
where 
\[H=
\begin{bmatrix}
\Omega & 0\\
0 & G
\end{bmatrix}\]
and 
\[\Omega=
\begin{bmatrix}
0 & \mrm{Id}_n\\
-\mrm{Id}_n & 0
\end{bmatrix}.\]

\end{rmk}

Note that when there are no odd dimensions we have $\mrm{Sp}(2n\vert 0,0)\cong\mrm{Sp}(2n)$.

\begin{rmk}
If we replace $G$ with a conjugate matrix, 
we obtain an isomorphic Lie group. 
\end{rmk}

\begin{ex}\label{ex-Rothstein}
Let $(M,\omegans)$ be an ordinary symplectic manifold, $E\rta M$ a vector bundle,
and $(g,\nabla)$ a metric and compatible connection on $E$. 
The data $(\omegans,g,\nabla)$ defines a super-symplectic form on the supermanifold $E[1]$, see \cite[Def. 1]{Rothstein}. 
Let $\msf{VB}^{\mrm{quad},\nabla}_{/\msf{Mfld}^\mrm{Sp}}$ be the category of quadruples $(M,E,g,\nabla)$ and morphisms respecting this data. 
Explicitly, a morphism $(M,E,g,\nabla)\rta(M',E',g',\nabla')$ is a map of vector bundles $f\colon E\rta E'$ that is a fiberwise isomorphism, lives over a local symplectomorphism $M\rta M'$, intertwines the quadratic forms, and so that $f^*\nabla'=\nabla$. 
\end{ex}

There is a symplectic analogue of Batchelor's theorem \cite{Batchelor}, due to Rothstein, \cite{Rothstein}.

\begin{thm}[Rothstein]\label{thm-Rothstein}
Every symplectic supermanifold is non-canonically isomorphic to one of the form in Example \ref{ex-Rothstein}.
\end{thm}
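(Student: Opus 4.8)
\textbf{Proof proposal for Theorem \ref{thm-Rothstein} (Rothstein).}

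The plan is to mimic the proof of Batchelor's theorem \cite{Batchelor}, upgrading each step to keep track of the symplectic structure. Let $(\bb{M},\omega)$ be a symplectic supermanifold of dimension $2n\vert r$, with underlying ordinary manifold $X$. First I would recall Batchelor's construction: choosing a connection on the bundle $\mathcal{J}/\mathcal{J}^2\rta X$, where $\mathcal{J}\subset\mathcal{O}_\bb{M}$ is the ideal of nilpotents (i.e., the sheaf of functions vanishing on $X$), produces a noncanonical splitting $\mathcal{O}_\bb{M}\cong\Gamma(X,\Lambda^\bullet E^\vee)$ for $E=(\mathcal{J}/\mathcal{J}^2)^\vee$, a rank-$r$ vector bundle on $X$. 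So as an ungraded (or merely $\bb{Z}/2$-graded) supermanifold, $\bb{M}\cong E[1]$.

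The work is to show that, after possibly adjusting this splitting, the symplectic form $\omega$ is carried to one of Rothstein's forms $(\omegans,g,\nabla)$ on $E[1]$ as in Example \ref{ex-Rothstein}. Since $\omega$ is even, its restriction to the even directions induces, on the body $X$, an ordinary closed $2$-form $\omegans$; nondegeneracy of $\omega$ forces $\omegans$ to be nondegenerate, so $(X,\omegans)$ is an ordinary symplectic manifold (this also pins down $2n=\dim X$). Restricting $\omega$ to the odd fibers gives a fiberwise nondegenerate symmetric bilinear form on $E$, i.e. a metric $g$ on $E$. The remaining piece of $\omega$ — its "mixed" component, linear in the odd coordinates and their differentials — is exactly the data of a connection $\nabla$ on $(E,g)$; the closedness condition $d\omega=0$ is what forces $\nabla$ to be metric-compatible, $\nabla g=0$. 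Conversely, Rothstein shows \cite[Def. 1]{Rothstein} that any such triple $(\omegans,g,\nabla)$ assembles into a genuine super-symplectic form on $E[1]$, so it suffices to produce the triple and match it to $\omega$.

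Concretely, I would filter $\Omega^2_\bb{M}$ by odd-polynomial degree and analyze $\omega$ component by component in a local frame: write $\omega=\omega_{(0)}+\omega_{(1)}+\omega_{(2)}+\cdots$, identify $\omega_{(0)}$ with (a form pulling back) $\omegans$, identify the coefficient in $\omega_{(2)}$ of $d\theta_i\wedge d\theta_j$ with $g$, and identify the $dx\wedge\theta\,d\theta$-type terms in $\omega_{(1)},\omega_{(2)}$ with the connection coefficients. Then I would show that a suitable change of the Batchelor splitting (equivalently, a fiberwise-linear automorphism of $E[1]$ over a symplectomorphism of $X$) normalizes $\omega$ to the standard Rothstein form built from $(\omegans,g,\nabla)$; the freedom here is controlled by the gauge action on connections and by Darboux-type normalizations in the even variables.

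The main obstacle I expect is the bookkeeping in this normalization step: showing that the splitting can be chosen so that \emph{no} residual higher-order mixed terms survive in $\omega$, i.e. that the only invariants are $(\omegans,g,\nabla)$ and everything else is gauge. This is a cohomological vanishing argument — a super-Moser / homotopy-trivialization along the odd-degree filtration — and getting the signs and the $\bb{Z}/2$-Koszul conventions right (as in Definition \ref{def-VFsympsuper} and Remark \ref{rmk-sp2nab}) is where the care is needed. One clean way to organize it: use a Moser-type flow generated by an odd vector field to interpolate between $\omega$ and its Rothstein model, with the obstruction to integrating the flow living in a sheaf cohomology group on $X$ that vanishes because the relevant bundles are soft (smooth, partition-of-unity) sheaves. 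That reduces the theorem to Rothstein's local computation plus the standard globalization, exactly paralleling how Batchelor's theorem reduces to a local triviality statement plus partitions of unity.
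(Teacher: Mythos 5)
The paper does not actually prove this theorem --- it is quoted from \cite{Rothstein} with no argument --- so there is no in-paper proof to compare against. What you have written is an outline of Rothstein's own argument, and in outline it is the right one: Batchelor-split $\bb{M}\cong E[1]$, filter $\omega$ by polynomial degree in the odd coordinates, read off $\omegans$, $g$, and $\nabla$ from the low-order components, and gauge away the rest by modifying the splitting.

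Two substantive caveats. First, the step you yourself flag as ``the main obstacle'' is not bookkeeping but the entire content of the theorem: you must show that the higher components of $\omega$ in the odd filtration can be killed by an automorphism of $E[1]$ covering a symplectomorphism of the body, and your proposal only names the strategy (a Moser-type homotopy with obstructions in soft sheaf cohomology) without exhibiting the relevant complex or verifying that the obstruction groups vanish degree by degree. As written this is a plan, not a proof. Second, your target normal form is slightly misstated: the naive expression $\pi^*\omegans$ plus the $g$- and $\nabla$-terms is in general \emph{not} closed; Rothstein's model form (his Definition 1, the one Example \ref{ex-Rothstein} refers to) necessarily carries a curvature correction, roughly $\pi^*\omegans+\tfrac{1}{2}g(F^\nabla\theta,\theta)+\cdots$, and it is this corrected form that $\omega$ must be normalized to. Relatedly, your claim that $d\omega=0$ ``forces'' $\nabla g=0$ is correct for the odd-odd-even component, but the component of $d\omega=0$ that is quadratic in the odd variables along even directions is precisely what produces the curvature term, so the degree-by-degree analysis has to account for it rather than conclude that everything beyond $(\omegans,g,\nabla)$ is pure gauge. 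With those two points repaired, the outline matches how Rothstein actually argues.
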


\begin{cor}
Let $(\bb{M},\omega)$ be a symplectic supermanifold. 
Then the underlying manifold $M$ inherits the structure of an ordinary symplectic manifold. 
In particular, $\bb{M}$ must have an even number of even directions.
\end{cor}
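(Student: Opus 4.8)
The plan is to deduce the corollary directly from Rothstein's theorem (Theorem \ref{thm-Rothstein}). By that theorem, $(\bb{M},\omega)$ is isomorphic—as a symplectic supermanifold—to one of the form in Example \ref{ex-Rothstein}, i.e.\ to $(E[1],\omega_{(\omegans,g,\nabla)})$ for some ordinary symplectic manifold $(M',\omegans)$, vector bundle $E\rta M'$, metric $g$, and compatible connection $\nabla$. First I would note that the underlying manifold of $E[1]$ is $M'$ itself (this is built into the construction $\cal{O}_{E[1]}=\Gamma(M',\Lambda^\bullet E^\vee)$ in the Batchelor example), so the isomorphism of supermanifolds restricts to a diffeomorphism $M\xrta{\sim}M'$ on underlying spaces. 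Since $(M',\omegans)$ is an ordinary symplectic manifold, transporting $\omegans$ along this diffeomorphism endows $M$ with a symplectic structure.

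Next I would check that this symplectic structure on $M$ is intrinsic, not an artifact of the (non-canonical) Rothstein splitting—i.e.\ that it depends only on $\omega$ and not on the chosen isomorphism. The clean way to phrase this is that the symplectic form on the underlying manifold is obtained by pulling $\omega$ back along the closed embedding $M\hookrightarrow\bb{M}$ of the reduced space into $\bb{M}$, equivalently by restricting $\omega$ to the even tangent directions along the zero section and then reducing mod the odd nilpotents. Concretely, there is a canonical map $\mrm{Vect}(M)\to\mrm{Vect}(\bb{M})|_M$ and one sets $\omegans := i^*\omega$, where $i\colon M\to\bb{M}$; since $\omega$ is even, closed, and non-degenerate, and since along $M$ the even–odd blocks of $\omega$ decouple (the odd block being the fiberwise quadratic form $Q$, which is irrelevant to $i^*\omega$), the restriction $i^*\omega$ is a genuine closed $2$-form on $M$, and non-degeneracy of $\omega$ forces $i^*\omega$ to be non-degenerate on the even block. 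Comparing with the Rothstein model shows $i^*\omega$ agrees with $\omegans$ there, so the two descriptions coincide and the structure is canonical.

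Finally, the last sentence of the corollary is immediate: an ordinary symplectic manifold has even dimension (a non-degenerate alternating form on a real vector space exists only in even dimension, \cite[Thm. 1.1]{CannasdaSilva}), so $\dim M$ is even, which is exactly the statement that $\bb{M}$ has an even number of even directions.

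The only real subtlety—hence the step I expect to be the main obstacle—is the well-definedness/canonicity in the second paragraph: Rothstein's theorem provides the symplectic structure on $M$ but only non-canonically, so one must exhibit the structure intrinsically (via $i^*\omega$) and verify it is independent of the splitting. This amounts to a short check that the even–even block of $\omega$, restricted to the reduced locus, is well-defined and non-degenerate, using that $\omega$ is even (so it has no even–odd cross terms in a homogeneous frame along $M$) and non-degenerate (so each block is non-degenerate separately). Everything else is bookkeeping with the definitions of $E[1]$ and of a morphism of supermanifolds.
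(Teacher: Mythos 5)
Your argument is correct and matches the paper's intent: the corollary is stated as an immediate consequence of Rothstein's theorem (Theorem \ref{thm-Rothstein}), with no separate proof given, and your first and third paragraphs reproduce exactly that deduction. Your second paragraph, verifying via $i^*\omega$ that the symplectic structure on the reduced manifold is canonical rather than an artifact of the non-canonical splitting, is a correct refinement beyond what the paper records, but it does not change the route.
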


We also have a super-analogue of the Darboux theorem \cite[Thm. 5.3]{Kostant}

\begin{thm}[Kostant]\label{thm-SuperDarboux}
Let $(\bb{M},\omega)\in\msf{sMfld}_{2n\vert r}^\mrm{Sp}$. 
Then there exists $Q=(\ep_1,\dots,\ep_r)$ so that for every $x\in\bb{M}$,
there exists an open neighborhood $U$ of $x$ that is symplectomorphic to $(\bb{R}^{2n\vert r},\omega_Q)$.
\end{thm}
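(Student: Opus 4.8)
The plan is to reduce the super-statement to the classical Darboux theorem on the underlying manifold, then straighten out the odd directions by an inductive Gram--Schmidt-type argument on the symbol of $\omega$. First I would invoke Rothstein's theorem (Theorem \ref{thm-Rothstein}) to write $(\bb{M},\omega)$ locally as $(E[1],\omega_{(\omegans,g,\nabla)})$ for some ordinary symplectic manifold $(M,\omegans)$, vector bundle $E\to M$ of rank $r$, metric $g$, and compatible connection $\nabla$; the signature $(a,b)$ of $g$ is locally constant on $M$ and provides the $Q$ in the statement. Applying the classical Darboux theorem (Theorem as in Example \ref{ex-LocalSymplecticStructure}) we may assume $M=\bb{R}^{2n}$ with $\omegans=\omega_0=\sum_i p_i\wedge q_i$, and by shrinking $U$ we may trivialize $E\cong\underline{\bb{R}^r}_M$.

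Next I would produce the correct odd coordinates. The obstruction to having $\omega=\omega_Q$ on the nose is twofold: the metric $g$, expressed in the trivialization, is a point-dependent symmetric bilinear form rather than the constant form $\mrm{diag}(\ep_1,\dots,\ep_r)$, and the connection one-forms of $\nabla$ contribute mixed even--odd terms to $\omega$. For the first point, I would use a smooth version of Sylvester's law of inertia: since $g(x)$ has constant signature $(a,b)$ on the connected set $U$, there is a smooth $\mrm{GL}_r(\bb{R})$-valued function $A(x)$ on $U$ with $A(x)^T g(x) A(x)=\mrm{diag}(\ep_1,\dots,\ep_r)$ (constructed by a smoothly-varying Gram--Schmidt orthogonalization, possible after shrinking $U$). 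Changing the odd coordinates $\theta\mapsto A(x)\theta$ reduces us to the case $g\equiv Q$. For the second point, the connection term can be absorbed: a compatible connection on the trivial quadratic bundle is, in the chosen frame, $d+\alpha$ with $\alpha$ an $\mathfrak{so}(Q)$-valued one-form, and the associated correction to $\omega$ is exact; after the $\theta\mapsto A(x)\theta$ move one further adjusts by a fiber-preserving symplectomorphism (generated by a suitable function quadratic in $\theta$) to kill the remaining even--odd cross terms, arranging $\nabla$ to be the trivial connection. At that stage $\omega$ has become exactly $\omega_0+\sum_i\frac{\ep_i}{2}\theta_i^2=\omega_Q$, giving the desired symplectomorphism $U\xrightarrow{\sim}(\bb{R}^{2n\vert r},\omega_Q)$.

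Alternatively — and this is probably the cleanest route — I would run Moser's trick directly in the super-setting: having arranged that $\omega$ and $\omega_Q$ agree to first order at the chosen point $x$ (using the linear-algebra normal form for symplectic super vector spaces stated after Definition \ref{def-symportho}), set $\omega_t=(1-t)\omega_Q+t\omega$, verify that $\omega_t$ stays nondegenerate near $x$ and even, solve $\iota_{X_t}\omega_t=-\dot\omega_t=\omega_Q-\omega$ for a time-dependent even vector field $X_t$ vanishing at $x$ (possible since $\omega-\omega_Q$ is exact on a star-shaped neighborhood by the super-Poincar\'e lemma, and its primitive can be chosen to vanish to second order at $x$), and integrate the flow. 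The main obstacle is the bookkeeping needed to make ``agree to first order at a point'' precise for super-forms and to guarantee nondegeneracy of $\omega_t$ across $t\in[0,1]$ after shrinking $U$; once the even part and the odd/quadratic part are both in normal form at $x$, convexity of the space of such $\omega_t$ and the standard homotopy argument close it out. I would cite \cite[Thm. 5.3]{Kostant} for the original statement and note that the Rothstein-plus-Moser argument above is the natural way to recover it in the present framework.
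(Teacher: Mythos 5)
The paper does not actually prove this statement: it is quoted as Kostant's theorem with a citation to \cite[Thm. 5.3]{Kostant} (and a pointer to \cite[\S 3]{Tuynman}), so there is no internal argument to compare against. Judged on its own, your second route --- normalize $\omega$ at the point using the linear algebra of symplectic super vector spaces, then run Moser's trick with the super-Poincar\'e lemma --- is essentially the standard proof and is sound in outline. The two places where care is genuinely needed are the ones you flag: nondegeneracy of $\omega_t$ along the affine path is \emph{not} automatic from constancy of signature (for indefinite $(a,b)$ an affine combination of two forms of signature $(a,b)$ can degenerate, e.g.\ $\mrm{diag}(1,-1)$ and $\mrm{diag}(-1,1)$), but it does hold after shrinking $U$ since $\omega_t$ equals $\omega_Q$ at the center point and nondegeneracy of an even $2$-form reduces, modulo nilpotents, to an open condition on the body; and existence of the flow of the even time-dependent vector field $X_t$ is a standard local fact for supermanifolds.

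Your first route has one imprecision worth correcting and one structural caveat. You cannot literally ``arrange $\nabla$ to be the trivial connection'': a metric connection with nonzero curvature is not gauge-trivial on any neighborhood, and the curvature enters the Rothstein form through its even--even block. What actually absorbs both the connection one-form and its curvature is a super-coordinate change that shifts the \emph{even} coordinates by terms quadratic in $\theta$ (equivalently, the induction on the filtration by powers of the odd ideal, correcting $\omega$ degree by degree in $\theta$ using exactness of the obstruction at each stage); a fiber-preserving transformation alone does not suffice. Separately, invoking Rothstein's global structure theorem to prove a local normal form is logically admissible but heavier than necessary and historically inverted --- Kostant's local theorem predates and does not depend on Rothstein --- so if you present a proof, the Moser/filtration argument should be the primary one, with the Rothstein reduction at most as motivation.
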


Here $\omega_Q$ is as in Example \ref{ex-LocalSymplecticStructure}. 
See also \cite[\S 3]{Tuynman} and the references therein. 

\begin{notation}
Let $(\bb{M},\omega)$ be a symplectic supermanifold so that locally $\omega$ is of the form $\omega_Q$ with signature $(a,b)$. 
We refer to $(\bb{M},\omega)$ as having \emph{type} $(2n\vert a,b)$.
\end{notation}

\begin{defn}
Let $\msf{sMfld}_{2n\vert a,b}$ denote the category of symplectic supermanifolds of type $(2n\vert a,b)$ and local symplectomorphisms. 
\end{defn}

Next, we would like to discuss symplectic vector fields on a symplectic supermanifold. 
For motivation and to review, we first recall the notions on ordinary symplectic manifolds. 

\subsubsection{Symplectic Vector Fields: Ordinary Manifolds}

Let $(M,\omegans)$ be a symplectic manifold. 
The nondegenerate 2-form $\omegans$ determines an isomorphism $TM\cong T^*M$, 
and thus an equivalence 
\[\phi_{\omegans}\colon \mrm{Vect}(M)\cong \Omega^1(M).\] 

\begin{defn}\label{def-SympVF}
Let $(M,\omegans)$ be a symplectic manifold. 
The Lie algebra of \emph{symplectic vector fields} is the sub-Lie algebra of 
$\mrm{Vect}(M)$ consisting of those vector fields $v$ such that 
$\phi_{\omegans}(v)$ is closed. 
Denote by $\mrm{Vect}^\mrm{symp}(M)$ the Lie algebra of symplectic vector fields.

Say $v$ is a \emph{Hamiltonian vector field} if $\phi_{\omegans}(v)$ is exact. 
In this case, we refer to a function $h$ such that $dh=\phi_{\omegans}(v)$ as a \emph{Hamiltonian} of $v$. 
\end{defn}

We will describe a characterization of symplectic vector fields in terms of the ring of functions $\mathcal{O}_M$. 
To do this, we need to understand the structure the symplectic form $\omegans$ induces on $\mathcal{O}_M$. 

\begin{defn}
A \emph{Poisson algebra} is an commutative algebra $P$ equipped with a Lie bracket $\{-,-\}$ satisfying the Leibnitz rule 
\[\{f,gh\}=\{f,g\}h+g\{f,h\}\]
for any $f,g,h\in P$. 

A \emph{Poisson derivation} of $P$ is a linear map $d\colon P\rta P$ so that for all $x,y\in P$ we have
\begin{itemize}
\item $d(xy)=d(x)y+xd(y)$, and 
\item $d(\{x,y\})=\{d(x),y\}+\{x,d(y)\}$.
\end{itemize}
\end{defn}

The following is \cite[Lem. 1.1.18]{McDuffSalamon}.
\begin{lem}
Let $(M,\omegans)$ be a symplectic manifold. 
Then $\mathcal{O}_M$ is a Poisson algebra with bracket 
\[\{f,g\}_{\omegans}=\phi_{\omegans}^{-1}(df)(g).\]
Here, $\phi_{\omegans}^{-1}(df)$ is the Hamiltonian vector field with Hamiltonian $f$.
\end{lem}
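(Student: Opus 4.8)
The plan is to verify the two Poisson algebra axioms directly from the definition of the bracket $\{f,g\}_{\omegans}=\phi_{\omegans}^{-1}(df)(g)$, using standard Cartan calculus on differential forms. Throughout I write $X_f := \phi_{\omegans}^{-1}(df)$ for the Hamiltonian vector field of $f$, so that $\iota_{X_f}\omegans = df$ and $\{f,g\} = X_f(g) = \iota_{X_g}\,df = \omegans(X_f, X_g)$; the skew-symmetry of $\omegans$ then immediately gives $\{f,g\} = -\{g,f\}$. The Leibniz rule is equally quick: since $X_f$ is a derivation of $\mathcal{O}_M$, we have $\{f,gh\} = X_f(gh) = X_f(g)h + gX_f(h) = \{f,g\}h + g\{f,h\}$.

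The only substantive point is the Jacobi identity, and this is where I would spend the effort. First I would establish the key compatibility $[X_f, X_g] = X_{\{f,g\}}$, i.e.\ that the assignment $f \mapsto X_f$ is a Lie algebra (anti)homomorphism. To see this, compute $\iota_{[X_f,X_g]}\omegans$ using the identity $\iota_{[u,v]} = \mathcal{L}_u \iota_v - \iota_v \mathcal{L}_u$ together with $\mathcal{L}_{X_f}\omegans = d\iota_{X_f}\omegans + \iota_{X_f}d\omegans = d(df) + 0 = 0$ (here closedness $d\omegans = 0$ enters), obtaining $\iota_{[X_f,X_g]}\omegans = \mathcal{L}_{X_f}(dg) = d(X_f g) = d\{f,g\}$, hence $[X_f,X_g] = X_{\{f,g\}}$ by nondegeneracy. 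Then the Jacobi identity for $\{-,-\}$ follows from the Jacobi identity for the commutator of vector fields: $\{f,\{g,h\}\} = X_f X_g(h)$ and cyclically, and $\{\{f,g\},h\} = X_{\{f,g\}}(h) = [X_f,X_g](h)$, so the cyclic sum $\{f,\{g,h\}\} + \{g,\{h,f\}\} + \{h,\{f,g\}\}$ reduces to the Jacobiator of $X_f, X_g, X_h$ acting on $h$ (after regrouping), which vanishes.

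The main obstacle — really the only place care is needed — is getting the Cartan-calculus bookkeeping and sign conventions right in the derivation of $[X_f,X_g] = X_{\{f,g\}}$, since this is where the hypotheses on $\omegans$ (closed and nondegenerate) are actually used and where an off-by-a-sign error would propagate into a false Jacobi identity. Everything else is formal. Since the lemma is quoted as \cite[Lem.~1.1.18]{McDuffSalamon}, I would simply cite that reference for the detailed computation rather than reproduce it, noting only that it is the standard fact that a closed nondegenerate $2$-form induces a Poisson bracket on functions with $f \mapsto X_f$ a Lie algebra homomorphism onto Hamiltonian vector fields.
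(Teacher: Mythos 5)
Your proposal is correct, and it matches the paper's treatment: the paper offers no proof of its own and simply cites \cite[Lem.~1.1.18]{McDuffSalamon}, which is exactly the reference you ultimately defer to, while the Cartan-calculus sketch you supply (Leibniz from the derivation property of $X_f$, Jacobi via $[X_f,X_g]=X_{\{f,g\}}$ using $d\omegans=0$ and nondegeneracy) is the standard argument. One small bookkeeping point of the kind you yourself flagged: with the convention $\iota_{X_f}\omegans=df$ one has $\{f,g\}=X_f(g)=dg(X_f)=\omegans(X_g,X_f)$, so your intermediate identity $\{f,g\}=\iota_{X_g}\,df=\omegans(X_f,X_g)$ is off by a sign; this is harmless for skew-symmetry and for the rest of the argument, but it is exactly the sort of convention mismatch that would need to be fixed in a fully written-out verification.
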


\begin{notation}
Let $(M,\omegans)$ be a symplectic manifold. 
We let $\mrm{Der}_{\omegans}(\mathcal{O}_M)$ denote the Lie algebra of Poisson derivations of the Poisson algebra $(\mathcal{O}_M,\{-,-\}_{\omegans})$. 
\end{notation}

The following is \cite[Def. 18.2]{CannasdaSilva}. 

\begin{lem}\label{lem-SympVFasDer}
Let $(M,\omegans)$ be a symplectic manifold. 
There is an equivalence of Lie algebras
\[\mrm{Vect}^\mrm{symp}(M,\omegans)\simeq\mrm{Der}_{\omegans}(\mathcal{O}_M).\]
\end{lem}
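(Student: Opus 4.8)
The plan is to exhibit mutually inverse Lie algebra maps between $\mrm{Vect}^\mrm{symp}(M,\omegans)$ and $\mrm{Der}_{\omegans}(\mathcal{O}_M)$. First I would send a symplectic vector field $v$ to the operator $L_v$ it induces on $\mathcal{O}_M$ by Lie derivative (equivalently, $f\mapsto v(f)$ using that $v$ is a derivation of $\mathcal{O}_M$). The first bullet of the Poisson derivation axioms is immediate since $v$ is a vector field. For the second bullet, I would compute $L_v\{f,g\}_{\omegans}$ using Cartan's formula and the fact that $L_v\omegans = 0$: indeed $v$ symplectic means $\iota_v\omegans$ is closed, so $L_v\omegans = d\iota_v\omegans + \iota_v d\omegans = 0$. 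Since the Poisson bracket is built canonically from $\omegans$ via $\phi_{\omegans}$, any transformation preserving $\omegans$ preserves the bracket; unwinding this gives $L_v\{f,g\} = \{L_v f, g\} + \{f, L_v g\}$. So the assignment lands in $\mrm{Der}_{\omegans}(\mathcal{O}_M)$, and it is a Lie algebra map because $[L_v, L_w] = L_{[v,w]}$.

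Next I would construct the inverse. Given a Poisson derivation $d$, the first axiom says $d$ is in particular a derivation of the commutative algebra $\mathcal{O}_M$, hence $d = L_v$ for a unique vector field $v \in \mrm{Vect}(M)$ (smooth manifolds: derivations of $\mathcal{O}_M$ are exactly vector fields). It remains to check $v$ is symplectic, i.e. $d(\iota_v\omegans) = 0$. I would argue this pointwise: nondegeneracy of $\omegans$ lets me test $\iota_v\omegans$ against Hamiltonian vector fields $X_h = \phi_{\omegans}^{-1}(dh)$, and $(\iota_v\omegans)(X_h) = \omegans(v, X_h)$ can be rewritten in terms of the Poisson bracket as $-\{h, \cdot\}$-type expressions involving $v(h)$; the compatibility of $d$ with $\{-,-\}$ then forces the relevant one-form to be closed. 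Concretely, I expect the identity $\omegans(v, X_h) = -v(h)$ (up to sign), combined with $d\{h,g\} = \{dh, g\} + \{h, dg\}$, to yield $d(\iota_v\omegans) = 0$ after contracting with a second Hamiltonian field. Since Hamiltonian vector fields span the tangent space at each point, this gives $\iota_v\omegans$ closed.

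The two constructions are visibly inverse: starting from $v$, taking $L_v$, and recovering the vector field gives back $v$; starting from $d = L_v$ and forming $L_v$ again is the identity. Both directions are Lie algebra homomorphisms — one by the bracket identity for Lie derivatives, the other because it is the set-theoretic inverse of a homomorphism — so we get the claimed equivalence of Lie algebras.

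The main obstacle will be the careful sign bookkeeping in the second direction: translating the closedness of $\iota_v\omegans$ into a statement purely about the Poisson bracket, and verifying that the Poisson-derivation compatibility is not just necessary but sufficient. This amounts to showing that the map $v \mapsto L_v$ is surjective onto $\mrm{Der}_{\omegans}(\mathcal{O}_M)$, which is where the nondegeneracy of $\omegans$ does the real work; everything else is a routine application of Cartan calculus and the definitions. Alternatively, one can avoid pointwise arguments entirely by noting that $\phi_{\omegans}$ intertwines the Lie-derivative action on $\mrm{Vect}(M)$ with that on $\Omega^1(M)$, so $d = L_v$ is a Poisson derivation if and only if $L_v$ commutes with $d_{\mrm{dR}}$ on functions in the appropriate sense, which is equivalent to $L_v\omegans = 0$, i.e. to $v$ being symplectic — I would present whichever of these is cleanest.
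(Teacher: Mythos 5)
Your proposal is correct. Note that the paper itself offers no proof of this lemma --- it simply points to Cannas da Silva (Def.\ 18.2) --- so your argument is filling a gap rather than diverging from the paper's route; what you describe is the standard textbook proof. The step you flag as delicate (surjectivity onto $\mrm{Der}_{\omegans}(\mathcal{O}_M)$, with its sign bookkeeping) closes cleanly if you organize both directions around a single pair of identities: for any vector field $v$ and any $f$,
\[
L_v(\iota_{X_f}\omegans)=\iota_{[v,X_f]}\omegans+\iota_{X_f}(L_v\omegans),
\qquad
L_v(\iota_{X_f}\omegans)=L_v(df)=d(vf)=\iota_{X_{vf}}\omegans ,
\]
where $X_h=\phi_{\omegans}^{-1}(dh)$. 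Unwinding $\{f,g\}=X_f(g)$ shows that the Poisson-derivation identity for $v$ is equivalent to $[v,X_f]=X_{vf}$ for all $f$; comparing the two displayed identities, this holds if and only if $\iota_{X_f}(L_v\omegans)=0$ for all $f$, and since Hamiltonian vector fields span each tangent space by nondegeneracy, that is equivalent to $L_v\omegans=0$, i.e.\ to $v$ being symplectic. This simultaneously proves well-definedness and surjectivity with no case analysis, and is exactly the ``cleanest'' alternative you mention at the end. One cosmetic point: in your second paragraph the symbol $d$ is doing double duty as the Poisson derivation and the de Rham differential; worth renaming one of them.
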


\subsubsection{Symplectic Vector Fields: Supermanifolds}

For the super case, we mimic the description of symplectic vector fields as derivations of a Poisson algebra.

\begin{defn}
A \emph{Poisson superalgebra} is a supercommutative superalgebra $R$ equipped with a Lie superbracket $\{-,-\}$ such that 
\[\{f,gh\}=\{f,g\}h+(-1)^{|f||g|}g\{f,h\}\]
for all $f,g,h\in R$.
\end{defn}

The following is in \cite[Pg. 244]{Tuynman}.

\begin{lem}\label{lem-OisPoissonSuperalg}\label{lem-superPoisson}
Let $(\bb{M},\omega)$ be a symplectic supermanifold. 
Then $\omega$ induces an equivalence 
\[\phi_\omega\colon\mrm{Vect}(\bb{M})\cong \Omega^1(\bb{M}),\]
and $\mathcal{O}_\bb{M}$ is a Poisson superalgebra under the superbracket
\[\{f,g\}_\omega=\phi_\omega^{-1}(df)(g).\]
\end{lem}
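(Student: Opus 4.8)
The plan is to reduce the super statement to the classical one via Rothstein's theorem (Theorem~\ref{thm-Rothstein}), or alternatively to verify both claims directly in local coordinates using the super-Darboux theorem (Theorem~\ref{thm-SuperDarboux}). For the first claim, that $\omega$ induces an isomorphism $\phi_\omega\colon\mrm{Vect}(\bb{M})\xrta{\sim}\Omega^1(\bb{M})$: since $\omega$ is an even, non-degenerate $2$-form, contraction $v\mapsto \iota_v\omega$ is an $\mathcal{O}_\bb{M}$-linear map of sheaves which preserves the $\bb{Z}/2$-grading. Non-degeneracy is a pointwise (indeed stalkwise) condition, so I would check that $\phi_\omega$ is an isomorphism after passing to a Darboux chart $(\bb{R}^{2n\vert r},\omega_Q)$, where the matrix $H$ of Remark~\ref{rmk-sp2nab} is explicit and visibly invertible; being an isomorphism of sheaves of $\mathcal{O}_\bb{M}$-modules is local, so this suffices. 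One must be slightly careful with signs in the graded contraction convention, but no real content is hidden here.

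For the second claim, I would define $\{f,g\}_\omega := \phi_\omega^{-1}(df)(g)$ and verify the axioms of a Poisson superalgebra. Supercommutativity of $\mathcal{O}_\bb{M}$ is given. The graded Leibniz rule $\{f,gh\}=\{f,g\}h+(-1)^{|f||g|}g\{f,h\}$ follows immediately from the fact that $\phi_\omega^{-1}(df)$ is a graded derivation of $\mathcal{O}_\bb{M}$, together with bookkeeping of the parity of $\phi_\omega^{-1}(df)$ (which equals $|f|$, since $\omega$ is even and $d$ is parity-preserving). The remaining, and genuinely substantive, points are graded antisymmetry $\{f,g\}=-(-1)^{|f||g|}\{g,f\}$ and the graded Jacobi identity.

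The main obstacle is the graded Jacobi identity. In the classical case this is equivalent to $d\omegans=0$, and I expect the same mechanism here: closedness of $\omega$ is exactly what makes $\{-,-\}_\omega$ a Lie superbracket. I would prove it by the coordinate-free route, expressing everything in terms of the Cartan calculus on $\bb{X}$ (the super de Rham complex of Definition~\ref{def-VFsympsuper}, with Lie derivative $\mathcal{L}_v$ and contraction $\iota_v$ satisfying graded versions of the usual identities $\mathcal{L}_v = d\iota_v + \iota_v d$ and $\iota_{[v,w]} = \mathcal{L}_v\iota_w - (-1)^{\cdots}\iota_w\mathcal{L}_v$). Writing $X_f := \phi_\omega^{-1}(df)$ for the Hamiltonian vector field of $f$, the key lemma is $[X_f, X_g] = X_{\{f,g\}}$, which one extracts from $d\omega = 0$ via the graded identity $\iota_{[X_f,X_g]}\omega = \mathcal{L}_{X_f}\iota_{X_g}\omega - (-1)^{|f||g|}\iota_{X_g}\mathcal{L}_{X_f}\omega$ and $\mathcal{L}_{X_f}\omega = d\iota_{X_f}\omega = d(df) = 0$. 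Granting this, Jacobi for $\{-,-\}_\omega$ is equivalent to the Jacobi identity for the commutator bracket on $\mrm{Vect}(\bb{M})$, which holds. Alternatively, if one prefers to avoid re-deriving the graded Cartan formulas, one can simply invoke Theorem~\ref{thm-Rothstein} to write $\bb{M} \cong E[1]$ for a symplectic manifold $M$ with metrized bundle $(E,g,\nabla)$, and then check the identity fiberwise by reducing to Rothstein's explicit formula for the bracket on $\mathcal{O}_{E[1]} = \Gamma(M, \Lambda^\bullet E^\vee)$, where the even part reproduces the classical Poisson bracket on $M$ and the odd part is controlled by $g$ and the curvature of $\nabla$; the sign-tracking is the only delicate part. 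I would present the intrinsic Cartan-calculus proof as the main argument, since it is the one that generalizes cleanly and makes transparent why closedness of $\omega$ is the crucial hypothesis.
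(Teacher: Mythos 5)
Your proposal is mathematically sound, but note that the paper does not actually prove this lemma: it is stated with the attribution ``The following is in [Tuynman, Pg.\ 244]'' and no argument is given, so there is nothing internal to compare against. Your argument is the standard one and is exactly what such a proof should look like: non-degeneracy of the even form $\omega$ is a local (stalkwise) condition checked in a Darboux chart, the Leibniz rule is immediate from $X_f=\phi_\omega^{-1}(df)$ being a derivation of parity $|f|$, and the graded Jacobi identity reduces via $[X_f,X_g]=X_{\{f,g\}}$ to $d\omega=0$ and $\mathcal{L}_{X_f}\omega=d\iota_{X_f}\omega+\iota_{X_f}d\omega=0$. The one substantive step you flag but do not carry out is graded antisymmetry, $\{f,g\}=-(-1)^{|f||g|}\{g,f\}$; this follows from writing $\{f,g\}=\pm\,\iota_{X_g}\iota_{X_f}\omega$ and using the graded skew-symmetry of the even $2$-form (skew in the even directions, symmetric in the odd ones, consistent with Definition~\ref{def-symportho}), and it is worth writing out since it is precisely where the super sign conventions enter. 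Your fallback via Theorem~\ref{thm-Rothstein} would also work but is heavier than needed; the intrinsic Cartan-calculus route is the right choice and is presumably the content of the cited page of Tuynman.
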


\begin{ex}\label{ex-FormalPoisson}
Since we will be using formal geometry, 
we will often be interested in the formal super-disk $\widehat{\bb{D}}^{2n\vert r}$. 
If we give $\bb{R}^{2n\vert r}$ a symplectic form of type $(2n\vert a,b)$, 
then functions on the formal disk inherits a Poisson algebra structure from the completion of functions on $\bb{R}^{2n\vert r}$ at the point 0. 
In coordinates, the Poisson bracket on 

\[\mathcal{O}_{\widehat{\bb{D}}^{2n\vert r}}=\bb{k}[[p_1,\dots,p_n,q_1,\dots,q_n,\theta_1,\dots,\theta_a,\theta'_1,\dots,\theta'_b]]\]

is given by 
\[\{p_i,q_i\}=1\]
\[\{\theta_i,\theta_j\}=1\]
\[\{\theta_i',\theta_j'\}=-1\]
and the rest zero. 
We denote this Poisson algebra by $\widehat{\mathcal{O}}_{2n\vert a,b}$. 
\end{ex}

\begin{notation}
Let $\mrm{Der}_\omega(\mathcal{O}_\bb{M})$ denote the Lie superalgebra of Poisson derivations of the Poisson superalgebra $(\mathcal{O}_\bb{M},\{-,-\}_\omega)$. 
\end{notation}

\begin{defn}
Let $(\bb{M},\omega)$ be a symplectic supermanifold. 
The Lie superalgebra of \emph{symplectic vector fields} on $\bb{M}$ is the Lie superalgebra of derivations  
\[\mrm{Vect}^\mrm{symp}(\bb{M},\omega)=\mrm{Der}_\omega(\mathcal{O}_\bb{M}).\]
\end{defn}

\subsubsection{Bundles}\label{sec-Bundles}

For an overview of vector bundles and principal bundles on supermanifolds, 
see \cite{Supergeometry} or \cite{Superbundles}. 
For a construction of the frame bundle of a supermanifold, see \cite[\S 2]{Tuynman}.
Just as the structure group of the frame bundle of an ordinary symplectic manifold can be reduced to the symplectic group, 
we have the following, 

\begin{lem}\label{lem-SymplecticFrame}
Let $(\bb{M},\omega)$ be a symplectic supermanifold 
of type $2n\vert a,b$. 
Then the structure group of the frame bundle $\mrm{Fr}_\bb{M}\rta\bb{M}$ 
can be reduced to $\mrm{Sp}(2n\vert a,b)$. 
\end{lem}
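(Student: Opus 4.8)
The plan is to construct the reduction explicitly, as the sub-bundle of \emph{symplectic frames} inside $\mrm{Fr}_\bb{M}$. Recall from \cite[\S 2]{Tuynman} that $\mrm{Fr}_\bb{M}\rta\bb{M}$ is a principal $\mrm{GL}(2n\vert r)$-bundle, a point of whose fibre over $x$ is a linear frame identifying the tangent (super) space at $x$ with the model $\bb{R}^{2n\vert r}$, with $\mrm{GL}(2n\vert r)$ acting by change of frame. Since $(\bb{M},\omega)$ has type $(2n\vert a,b)$, the even $2$-form $\omega$ restricts at every point to a symplectic super bilinear form of signature $(a,b)$; fix the standard model $\omega_Q$ on $\bb{R}^{2n\vert r}$ with $Q$ of signature $(a,b)$ (Example \ref{ex-LocalSymplecticStructure}), and let $\mrm{Fr}^{\mrm{symp}}_\bb{M}\subset\mrm{Fr}_\bb{M}$ be the locus of frames $e$ with $e^*\omega=\omega_Q$. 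I claim this is a principal $\mrm{Sp}(2n\vert a,b)$-sub-bundle, which is exactly the asserted reduction of structure group.

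I would then verify three things. First, $\mrm{Fr}^{\mrm{symp}}_\bb{M}$ is non-empty over every point: this is the linear normal form recorded right after Definition \ref{def-symportho}, which identifies any symplectic super vector space of dimension $2n\vert r$ with $(\bb{R}^{2n\vert r},\omega_Q)$ for the appropriate $Q$; by the type hypothesis all these $Q$ have the same signature $(a,b)$ and hence give the same model form. Second, $\mrm{Sp}(2n\vert a,b)$ acts freely and transitively on the fibres: given frames $e,e'$ over $x$ with $e^*\omega=\omega_Q=(e')^*\omega$, the element $g\colon=e^{-1}e'\in\mrm{GL}(2n\vert r)$ satisfies $g^*\omega_Q=\omega_Q$, i.e.\ $g^{sT}Hg=H$, so $g\in\mrm{Sp}(2n\vert a,b)$ by Remark \ref{rmk-sp2nab}; conversely $\mrm{Sp}(2n\vert a,b)$ manifestly preserves the condition $e^*\omega=\omega_Q$. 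Third, local triviality: by the super-Darboux theorem (Theorem \ref{thm-SuperDarboux}) $\bb{M}$ has an atlas of charts symplectomorphic to $(\bb{R}^{2n\vert r},\omega_Q)$, and on each such chart the coordinate frame is a local section of $\mrm{Fr}^{\mrm{symp}}_\bb{M}$; equivalently, the transition maps of this atlas are local symplectomorphisms of $\omega_Q$, so their Jacobians are valued in $\mrm{Sp}(2n\vert a,b)$, and this $\mrm{Sp}(2n\vert a,b)$-valued cocycle \emph{is} a reduction of the structure group of $\mrm{Fr}_\bb{M}$.

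The step needing the most care is the purely supergeometric one: checking that $\mrm{Fr}^{\mrm{symp}}_\bb{M}$ is genuinely a sub-supermanifold of $\mrm{Fr}_\bb{M}$ — equivalently, a principal bundle in the super sense — rather than just a condition imposed fibrewise. The clean way is to observe that $e\mapsto e^*\omega$ defines a morphism from $\mrm{Fr}_\bb{M}$ to the trivial bundle of even $2$-forms on $\bb{R}^{2n\vert r}$, that the $\mrm{GL}(2n\vert r)$-orbit of $\omega_Q$ in that space is the homogeneous superspace $\mrm{GL}(2n\vert r)/\mrm{Sp}(2n\vert a,b)$ (the stabilizer of $\omega_Q$ being $\mrm{Sp}(2n\vert a,b)$, by Remark \ref{rmk-sp2nab}), and that the resulting orbit map is a submersion of supermanifolds; then $\mrm{Fr}^{\mrm{symp}}_\bb{M}$ is the preimage of the base point $\omega_Q$ under a submersion, hence a sub-supermanifold, on which $\mrm{Sp}(2n\vert a,b)$ acts making it principal over $\bb{M}$. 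If one prefers to bypass this, the \v{C}ech description in the last clause of the previous paragraph already produces the required $\mrm{Sp}(2n\vert a,b)$-valued transition cocycle directly from a Darboux atlas, which suffices.
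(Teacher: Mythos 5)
Your proof is correct and is exactly the elaboration of what the paper leaves implicit: the paper gives no argument for Lemma \ref{lem-SymplecticFrame} beyond asserting the analogy with the ordinary case and identifying the fiber as $\mrm{Symp}\left((T_x\bb{M},\omega\vert_x),(\bb{R}^{2n\vert r},\omega_Q)\right)$, which is precisely your sub-bundle $\mrm{Fr}^{\mrm{symp}}_\bb{M}$ of symplectic frames. Your three verifications (fiberwise nonemptiness via the linear normal form after Definition \ref{def-symportho}, free transitive $\mrm{Sp}(2n\vert a,b)$-action via Remark \ref{rmk-sp2nab}, and local triviality via the super-Darboux theorem or the submersion/orbit argument) supply the details the paper omits.
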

The fiber over $x\in\bb{M}$ will be
\[\mrm{Fr}^{\mrm{Sp}(2n\vert a,b)}_\bb{M}\vert_x=\mrm{Symp}\left((T_x\bb{M},\omega\vert_x),(\bb{R}^{2n\vert r},\omega_Q)\right),\]
the group of linear symplectomorphisms. 

\section{Gelfand-Kazhdan Descent for Symplectic Supermanifolds}\label{sec-GKDescent}

We would like to construct a descent functor that allows us to study symplectic supermanifolds locally. 
The notion of descent we will consider is a variant of the Borel construction. 
For $K$ a Lie group, 
the Borel construction takes a principal $K$-bundle $P\rta X$ and a $K$-module $V$ to the vector bundle $P\times_KV\rta X$. 
Harish-Chandra descent is a generalization of this construction from $K$ to a Harish-Chandra pair $(\mfrk{g},K)$. 
We will need a slightly more complicated version of Harish-Chandra descent, 
known as Gelfand-Kazhdan descent, \cite[Def. 2.17]{GGW}. 

\begin{rmk}
The Gelfand-Kazhdan descent considered here generalizes that in \cite[\S 2.4]{GGW} in two ways. 
First, we work with supermanifolds. 
Second, our descent allows for general symplectic manifolds rather than just cotangent bundles. 
The symplectic case is also studied in \cite{BK}. 
\end{rmk}

\subsection{Harish-Chandra Pair}\label{subsec-HCpair}

We define the Harish-Chandra pair we will use for our super-Gelfand-Kazhdan descent. 

\begin{defn}
A \emph{super-Harish-Chandra} pair (sHC pair) is a pair $(\mfrk{g}, K)$ 
where $\mfrk{g}$ is a Lie superalgebra and $K$ is a Lie supergroup together with
\begin{itemize}
\item an injective Lie superalgebra map $i\colon{\mrm{Lie}(K)}\rta\mfrk{g}$
\item an action of $K$ on $\mfrk{g}$, $\rho_K\colon K\rta\mrm{Aut}(\mfrk{g})$
\end{itemize}
such that the action of ${\mrm{Lie}(K)}$ on $\mfrk{g}$ induced by $\rho_K$,
\[\mrm{Lie}(\rho_K)\colon{\mrm{Lie}(K)}\rta\mrm{Der}(\mfrk{g}),\]
is the adjoint action induced from the embedding $i$.
\end{defn}

In the purely even case, 
when $\mfrk{g}$ is an ordinary Lie algebra and $K$ is an ordinary Lie group, 
this recovers the usual (non-super) definition of an HC pair.

\begin{defn}
A \emph{morphism of super-Harish-Chandra pairs} $(\mfrk{f},f)\colon(\mfrk{g},K)\rta (\mfrk{g}',K')$ is

\begin{itemize}
\item a map of Lie superalgebras $\mfrk{f}\colon\mfrk{g}\rta\mfrk{g}'$ and
\item a map of Lie supergroups $f\colon K\rta K'$
\end{itemize}
such that the diagram of Lie superalgebras

\[\begin{xymatrix}
{
{\mrm{Lie}(K)}\arw[r]^{\mrm{Lie}(f)}\arw[d]_i & {\mrm{Lie}(K)}'\arw[d]^{i'}\\
\mfrk{g}\arw[r]_{\mfrk{f}} & \mfrk{g}'
}
\end{xymatrix}\]

commutes. 
\end{defn}

\begin{ex}
The category of sHC pairs has an initial object $(0,e)$, where $0$ is the 0-dimensional Lie superalgebra and $e$ is the $0$-dimensional Lie supergroup consisting of the identity point.
\end{ex}

\begin{ex}\label{ex-BasicHC}
Let $K$ be a Lie supergroup. 
Then $({\mrm{Lie}(K)}, K)$ is sHC pair, see \cite[Thm. 3.5]{Kostant}. 
\end{ex}

\begin{ex}\label{ex-HCpairSubgroup}
Let $G$ and $K$ be Lie supergroups. 
Let $\iota \colon K'\subset K$ be a closed sub-supergroup. 
There is a unique sHC pair structure on $(\mrm{Lie}(G),K')$ so that 
\[(\mrm{Id}_{\mrm{Lie}(G)}, \iota)\colon (\mrm{Lie}(G),K')\rta(\mrm{Lie}(G), K)\]
is a morphism of super-Harish-Chandra pairs. 
This is \cite[Ex. 1.2]{GGW}. 

More generally, if $(\mfrk{g},K)$ is an sHC pair and $K'\subset K$ is a closed sub-supergroup, 
then there is a unique sHC pairs structure on $(\mfrk{g}, K')$ so that $(\mrm{Id}_\mfrk{g},\iota)$ is a morphism of sHC pairs.
\end{ex}

\begin{ex}
Let $(\mfrk{g},K)$ be an HC pair. 
Given a central extension $\hat{\mfrk{g}}$ of $\mfrk{g}$ that is split over $\mrm{Lie}(K)$, 
the pair $(\hat{\mfrk{g}},K)$ is an HC pair. 
This is in \cite[\S 2.1.1]{BenZviFrenkel}. 
\end{ex}

The following lemmas allows us to produce more examples of sHC pairs. 

\begin{lem}\label{lem-HCsubLie}
Let $(\mfrk{g}, K)$ be an sHC pair. 
Let $j\colon \mfrk{g}'\subset \mfrk{g}$ be a sub-Lie superalgebra. 
If the injective map ${\mrm{Lie}(K)}\rta\mfrk{g}$ factors through a map $j'\colon{\mrm{Lie}(K)}\rta \mfrk{g}'$, 
then there is a unique sHC pair structure on $(\mfrk{g}',K)$ so that $(j', \mrm{Id}_K)$ is a morphism of sHC pairs.
\end{lem}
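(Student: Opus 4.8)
The plan is to transport the sHC structure from $(\mfrk{g},K)$ to $(\mfrk{g}',K)$ along the inclusion $j$ --- in the same spirit as Example \ref{ex-HCpairSubgroup}, but restricting the Lie algebra rather than the group --- and then read off existence and uniqueness from the axioms. For the structure map, set $i'\colon=j'\colon\mrm{Lie}(K)\to\mfrk{g}'$; since $j$ is an injective Lie superalgebra map and $i=j\circ j'$ is too, $j'$ is automatically injective and bracket-preserving, so $i'$ is a genuine embedding $\mrm{Lie}(K)\hookrightarrow\mfrk{g}'$. For the $K$-action, note that $\mfrk{g}'$ is stable under $\rho_K$ (discussed below) and set $\rho'_K(k)\colon=\rho_K(k)|_{\mfrk{g}'}$; each $\rho_K(k)$ restricts to an automorphism of the sub-Lie superalgebra $\mfrk{g}'$, with inverse $\rho_K(k^{-1})|_{\mfrk{g}'}$, so $\rho'_K\colon K\to\mrm{Aut}(\mfrk{g}')$ is a morphism of Lie supergroups. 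The sHC compatibility for $(\mfrk{g}',K)$ then follows by restricting that of $(\mfrk{g},K)$: for $x\in\mrm{Lie}(K)$ we get $\mrm{Lie}(\rho'_K)(x)=\mrm{Lie}(\rho_K)(x)|_{\mfrk{g}'}=\mrm{ad}_{\mfrk{g}}(i(x))|_{\mfrk{g}'}$, and since $i(x)=j(j'(x))\in\mfrk{g}'$ while the bracket of $\mfrk{g}$ restricts to that of $\mfrk{g}'$, this equals $\mrm{ad}_{\mfrk{g}'}(i'(x))$. Thus $(\mfrk{g}',K)$ with $(i',\rho'_K)$ is an sHC pair, and $(j,\mrm{Id}_K)\colon(\mfrk{g}',K)\to(\mfrk{g},K)$ is a morphism of sHC pairs: $j$ is a Lie superalgebra map, $\mrm{Id}_K$ a Lie supergroup map, the required square amounts to $j\circ i'=i$ (which holds since $i'=j'$ and $i=j\circ j'$), and $j$ intertwines $\rho'_K$ with $\rho_K$ by construction.

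For uniqueness, suppose $(\tilde\imath,\tilde\rho)$ is another sHC structure on $(\mfrk{g}',K)$ making $(j,\mrm{Id}_K)$ a morphism. Commutativity of the defining square forces $j\circ\tilde\imath=i=j\circ j'$, so $\tilde\imath=j'$ because $j$ is injective; and $K$-equivariance of $j$ forces $j\big(\tilde\rho(k)(y)\big)=\rho_K(k)\big(j(y)\big)$ for every $y\in\mfrk{g}'$, i.e.\ $\tilde\rho(k)=\rho_K(k)|_{\mfrk{g}'}=\rho'_K(k)$. Hence $(\tilde\imath,\tilde\rho)=(i',\rho'_K)$.

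The main (indeed only) obstacle is the $\rho_K$-stability of $\mfrk{g}'$ invoked above. At the infinitesimal level it is automatic: a sub-Lie superalgebra containing $i(\mrm{Lie}(K))$ is preserved by $\mrm{ad}_{\mfrk{g}}(i(\mrm{Lie}(K)))=\mrm{Lie}(\rho_K)(\mrm{Lie}(K))$, hence by the identity component of $K$. When $K$ is connected there is nothing further to check; in general one should also require $\mfrk{g}'\subset\mfrk{g}$ to be a $K$-subrepresentation, which holds in every situation where we apply the lemma ($\mfrk{g}'$ there being cut out by $K$-equivariant data). This same connectedness point is exactly why uniqueness relies on equivariance: for disconnected $K$, $\mrm{Lie}(\rho'_K)$ alone does not determine $\rho'_K$, so one genuinely needs morphisms of sHC pairs to be $K$-equivariant. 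Beyond this the argument is purely formal, and the $\bb{Z}/2$-grading plays no special role --- restricting a graded bracket or a graded automorphism, and differentiating a Lie supergroup map, all behave just as in the even case.
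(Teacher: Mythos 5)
Your proof is correct and follows the same basic strategy as the paper's — transport the structure maps from $(\mfrk{g},K)$ to $(\mfrk{g}',K)$ by restriction — but it is executed more carefully and covers more ground. The paper produces the $K$-action on $\mfrk{g}'$ indirectly: it restricts the adjoint action of $\mrm{Lie}(K)$ to $\mfrk{g}'$ (which works because $i(\mrm{Lie}(K))\subset\mfrk{g}'$ and $\mfrk{g}'$ is a subalgebra) and then asserts that the resulting formula is the derivative of a $K$-action $\rho'_K$; it does not address uniqueness at all. You instead restrict $\rho_K$ itself, which forces you to confront the one real subtlety — whether $\mfrk{g}'$ is $\rho_K$-stable — and you correctly observe that this is automatic only on the identity component of $K$, so that for disconnected $K$ an extra hypothesis (that $\mfrk{g}'$ is a $K$-subrepresentation) is genuinely needed; the paper's ``integrate the restricted adjoint action'' argument quietly has the same limitation. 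Your uniqueness argument, via injectivity of $j$ together with $K$-equivariance of $j$, is a welcome addition, and your remark that uniqueness requires equivariance to be part of the definition of a morphism of sHC pairs is on point: the paper's stated definition of morphism only demands commutativity of the square over the $i$'s, so as literally written uniqueness of $\rho'_K$ for disconnected $K$ would not follow. (One cosmetic note: the lemma's statement says $(j',\mrm{Id}_K)$, but as your proof and the paper's Corollary \ref{cor-HCpairSymp} both indicate, the intended morphism is $(j,\mrm{Id}_K)\colon(\mfrk{g}',K)\to(\mfrk{g},K)$.)
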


\begin{proof}
Since ${\mrm{Lie}(K)}\rta\mfrk{g}$ is injective, 
so is the factored map $j'\colon{\mrm{Lie}(K)}\rta\mfrk{g}'$. 
To produce an action $\rho_K'\colon K\rta\mrm{Aut}(\mfrk{g}')$ of $K$ on $\mfrk{g}$', 
note that the adjoint action of ${\mrm{Lie}(K)}$ on $\mfrk{g}$ 
(via the embedding $i\colon{\mrm{Lie}(K)}\rta\mfrk{g}$)
may be restricted to the adjoint action of ${\mrm{Lie}(K)}$ on $\mfrk{g}'$ 
(via the embedding $j$). 
Since $(\mfrk{g},K)$ is an sHC pair, 
the adjoint action of ${\mrm{Lie}(K)}$ on $\mfrk{g}$ is given by $\mrm{Lie}(\rho_K)$. 
Thus, for $x\in{\mrm{Lie}(K)}$ and $g\in\mfrk{g}'$, 
the adjoint action of $x$ on $g$ is given by the formula 

\[x(g)=\frac{d}{dt}\vert_{t=0}\exp(tx)\cdot g.\]

This is exactly the formula for the derivative of an action 
$\rho_K'\colon K\rta\mrm{Aut}(\mfrk{g}')$. 
Thus $(\mfrk{g}',K)$ is an sHC pair. 
The pair $(j',\mrm{Id}_K)$ is a morphism of sHC pairs by construction. 
\end{proof}

The following geometric incarnation of Example \ref{ex-BasicHC} 
will be the motivation from which we will construct our sHC pair of interest. 

\begin{ex}
Let $\bb{X}$ be a supermanifold. 
Then $(\mrm{Vect}(\bb{X}),\mrm{Diff}(\bb{X}))$ is almost an sHC pair. 
As $\mrm{Diff}(\bb{X})$ is infinite-dimensional, this is not technically an example of an sHC pair. 
However, there is an injective map 
\[\mrm{Diff}(\bb{X})\rta\mrm{Aut}(\mathcal{O}_\bb{X})\]
and one can think of the Lie algebra of $\mrm{Aut}(\mathcal{O}_{\bb{X}})$ as being vector fields on $\bb{X}$, 
\[\mrm{Vect}(\bb{X})= \mrm{Der}(\mathcal{O}_\bb{X})``\simeq"\mrm{Lie}(\mrm{Aut}(\mathcal{O}_\bb{X})).\]

If $\bb{X}$ is an affine space, we can make a related precise statement. 
The linear diffeomorphisms $\mrm{GL}(\bb{X})$ of $\bb{X}$ form a sub-supergroup of $\mrm{Aut}(\mathcal{O}_\bb{X})$. 
Now, $\mrm{GL}(\bb{X})$ is a Lie supergroup, and 
$(\mrm{Vect}(\bb{X}),\mrm{GL}(\bb{X}))$ 
is an sHC pair.
\end{ex}

\begin{ex}
Let $\bb{M}=\bb{R}^{2n\vert r}$ be the symplectic supermanifold with symplectic form $\omega_Q$, as in Example \ref{ex-LocalSymplecticStructure}. 
Then $\mrm{Sp}(2n\vert a,b)$ is a closed sub-supergroup of $\mrm{GL}(\bb{M})$, 
where $(a,b)$ is the signature of $Q$, see Remark \ref{rmk-sp2nab}.
By Example \ref{ex-HCpairSubgroup}, we get an sHC pair 
\[(\mrm{Vect}(\bb{R}^{2n\vert r}) ,\mrm{Sp}(2n\vert a,b)).\] 
\end{ex}

Recall the Lie sub-superalgebra of symplectic vector fields 
$j\colon \mrm{Vect}^\mrm{symp}(\bb{M},\omega )\subset\mrm{Vect}(\bb{M})$ 
from Definition \ref{def-SympVF}.

\begin{cor}\label{cor-HCpairSymp}
There is a unique sHC pair structure on $(\mrm{Vect}^\mrm{symp}(\bb{M},\omega),\mrm{Sp}(2n\vert a,b))$ so that $(j,\mrm{Id})$ is a morphism of sHC pairs.
\end{cor}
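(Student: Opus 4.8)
The plan is to deduce Corollary~\ref{cor-HCpairSymp} as a direct application of Lemma~\ref{lem-HCsubLie}. We already have the sHC pair $(\mrm{Vect}(\bb{M}),\mrm{Sp}(2n\vert a,b))$ from the preceding example, where $\bb{M}=\bb{R}^{2n\vert r}$ with form $\omega_Q$, obtained via Example~\ref{ex-HCpairSubgroup} from the inclusion $\mrm{Sp}(2n\vert a,b)\subset\mrm{GL}(\bb{M})$. The Lie sub-superalgebra in question is $j\colon\mrm{Vect}^\mrm{symp}(\bb{M},\omega)\subset\mrm{Vect}(\bb{M})$. So, to invoke Lemma~\ref{lem-HCsubLie}, the only thing to verify is that the structure map ${\mrm{Lie}(\mrm{Sp}(2n\vert a,b))}\rta\mrm{Vect}(\bb{M})$ factors through $\mrm{Vect}^\mrm{symp}(\bb{M},\omega)$. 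Granting that, the lemma immediately produces a unique sHC pair structure on $(\mrm{Vect}^\mrm{symp}(\bb{M},\omega),\mrm{Sp}(2n\vert a,b))$ for which $(j,\mrm{Id})$ is a morphism of sHC pairs, which is exactly the assertion.

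The one step requiring an argument is therefore the factorization claim: every vector field on $\bb{M}$ arising from the action of $\mrm{Sp}(2n\vert a,b)$ (equivalently, from an element of its Lie superalgebra) is a symplectic vector field. I would argue this as follows. An element $x\in{\mrm{Lie}(\mrm{Sp}(2n\vert a,b))}$ generates a one-parameter subgroup $\exp(tx)$ of linear symplectomorphisms of $(\bb{M},\omega_Q)$; the associated vector field $v_x=\frac{d}{dt}\vert_{t=0}\exp(tx)\cdot(-)$ thus has flow preserving $\omega_Q$, so $\mathcal{L}_{v_x}\omega_Q=0$. By Cartan's formula in the super setting, $\mathcal{L}_{v_x}\omega_Q=d\iota_{v_x}\omega_Q+\iota_{v_x}d\omega_Q=d\iota_{v_x}\omega_Q$ since $\omega_Q$ is closed, so $\phi_\omega(v_x)=\iota_{v_x}\omega_Q$ is closed, i.e.\ $v_x\in\mrm{Vect}^\mrm{symp}(\bb{M},\omega)$ by Definition~\ref{def-SympVF} (using that symplectic vector fields, characterized as Poisson derivations in Lemma~\ref{lem-OisPoissonSuperalg}, agree with the $\phi_\omega$-closed vector fields). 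Since ${\mrm{Lie}(\mrm{Sp}(2n\vert a,b))}$ is spanned by such $x$ and $\mrm{Vect}^\mrm{symp}(\bb{M},\omega)$ is a linear subspace, the whole image of ${\mrm{Lie}(\mrm{Sp}(2n\vert a,b))}\rta\mrm{Vect}(\bb{M})$ lands in $\mrm{Vect}^\mrm{symp}(\bb{M},\omega)$.

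With the factorization in hand I would simply write: ``Apply Lemma~\ref{lem-HCsubLie} to the sHC pair $(\mrm{Vect}(\bb{M}),\mrm{Sp}(2n\vert a,b))$, the sub-Lie superalgebra $j\colon\mrm{Vect}^\mrm{symp}(\bb{M},\omega)\subset\mrm{Vect}(\bb{M})$, and the factored structure map just described.'' The uniqueness part of Corollary~\ref{cor-HCpairSymp} is inherited verbatim from the uniqueness clause of Lemma~\ref{lem-HCsubLie}, so nothing further is needed.

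The main (and only real) obstacle is the linear-symplectic factorization, and even that is essentially immediate once one observes that the defining relation $M^{sT}HM=H$ of $\mrm{Sp}(2n\vert a,b)$ (Remark~\ref{rmk-sp2nab}) says precisely that these linear maps preserve $\omega_Q$; differentiating gives infinitesimal symplectomorphisms, which are symplectic vector fields. I would keep the proof short, citing Lemma~\ref{lem-HCsubLie} for the formal content and spending one or two sentences on why the structure map factors through the symplectic vector fields.
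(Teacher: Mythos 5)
Your proposal is correct and follows exactly the paper's route: the paper's entire proof is ``This follows from Lemma \ref{lem-HCsubLie}.'' You additionally spell out the hypothesis the paper leaves implicit --- that $\mrm{Lie}(\mrm{Sp}(2n\vert a,b))\rta\mrm{Vect}(\bb{M})$ factors through the symplectic vector fields --- and your verification of it (one-parameter groups of linear symplectomorphisms give infinitesimal symplectomorphisms, hence closed $\iota_v\omega_Q$ by Cartan's formula) is sound.
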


\begin{proof}
This follows from Lemma \ref{lem-HCsubLie}.
\end{proof}

We would like to mimic the above story for $\bb{R}^{2n\vert r}$ replaced with the formal super-disk. 

\begin{defn}\label{def-formalnbhd}
Let $\bb{X}$ be a supermanifold and $x\in \bb{X}$. 
Let $\mathcal{O}_{\bb{X},x}$ denote the superalgebra of germs of functions at $x$. 
Let $\mfrk{m}_x$ be the ideal of functions vanishing at $x$. 
The ring of functions of the \emph{formal neighborhood} of $\bb{X}$ at $x$ is the limit 

\[\widehat{\mathcal{O}}_{\bb{X},x}=\lim_i \mathcal{O}_{\bb{X},x}/\mfrk{m}_x^i.\] 

The Lie superalgebra of \emph{formal vector fields} of $\bb{X}$ at $x$ is the Lie superalgebra of derivations

\[\widehat{\mrm{Vect}}_x(\bb{X})=\mrm{Der}(\widehat{\mathcal{O}}_{\bb{X},x}).\]
\end{defn}

\begin{rmk}
One can consider a subgroup $\mrm{Aut}^\mrm{filt}(\widehat{\mathcal{O}}_{\bb{X},x})$ 
of $\mrm{Aut}(\widehat{\mathcal{O}}_{\bb{X},x})$ of filtration preserving automorphisms. 
This subgroup can be considered as a pro-Lie supergroup. 
Its Lie superalgebra consists of those vector fields that vanish at $x$. 
We have a non-canonical equivalence

\begin{align}\label{eq-VanishAtZero}
\mrm{Lie}(\mrm{Aut}^\mrm{filt}(\widehat{\mathcal{O}}_{\bb{X},x}))\oplus \bb{R}^{2n\vert r}\simeq \widehat{\mrm{Vect}}_x(\bb{X}),
\end{align}

where $\bb{X}$ has dimension $2n\vert r$. 
This equivalence is given informally by taking a pair $(v_0,y)$ 
of a vector field that vanishes at $x$ and a vector $y\in\bb{R}^{2n\vert r}$ 
to the vector field that looks like $v_0$ translated by $y$. 
See \cite[\S 2.1.1]{GGW} for the non-super analogue.

\end{rmk}

\begin{lem}
The pair 
$(\widehat{\mrm{Vect}}_x(\bb{X}),\mrm{Aut}^\mrm{filt}(\widehat{\mathcal{O}}_{\bb{X},x}))$ 
has the structure of a pro-sHC pair.
\end{lem}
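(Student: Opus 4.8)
The plan is to verify the axioms of a (pro-)sHC pair for the pair $(\widehat{\mrm{Vect}}_x(\bb{X}),\mrm{Aut}^\mrm{filt}(\widehat{\mathcal{O}}_{\bb{X},x}))$, treating everything as a filtered limit of the finite-dimensional/truncated data. First I would fix the filtration on $\widehat{\mathcal{O}}_{\bb{X},x}$ by powers of the maximal ideal $\mfrk{m}_x$, so that $\widehat{\mathcal{O}}_{\bb{X},x}=\lim_i\mathcal{O}_{\bb{X},x}/\mfrk{m}_x^i$ and $\widehat{\mrm{Vect}}_x(\bb{X})=\mrm{Der}(\widehat{\mathcal{O}}_{\bb{X},x})=\lim_i\mrm{Der}(\mathcal{O}_{\bb{X},x}/\mfrk{m}_x^{i})$ acquires the induced filtration whose $k$-th piece consists of derivations raising the $\mfrk{m}_x$-adic order by at least $k$; similarly $\mrm{Aut}^\mrm{filt}(\widehat{\mathcal{O}}_{\bb{X},x})$ is the inverse limit of the (finite-dimensional algebraic, hence Lie) supergroups $\mrm{Aut}^\mrm{filt}(\mathcal{O}_{\bb{X},x}/\mfrk{m}_x^i)$ of filtration-preserving automorphisms of the truncations. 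The point of the hypothesis ``filtration-preserving'' is precisely to make these truncations finite-dimensional so that each stage is an honest Lie supergroup and the limit is a pro-Lie supergroup in the sense needed.

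Next I would construct the injective Lie superalgebra map $i\colon\mrm{Lie}(\mrm{Aut}^\mrm{filt}(\widehat{\mathcal{O}}_{\bb{X},x}))\rta\widehat{\mrm{Vect}}_x(\bb{X})$. Differentiating a filtration-preserving automorphism at the identity yields a filtration-preserving derivation of $\widehat{\mathcal{O}}_{\bb{X},x}$, i.e.\ a vector field vanishing at $x$; compatibly over the truncations this gives $i$, and it is the subspace identified in \eqref{eq-VanishAtZero} as the complement of the translations $\bb{R}^{2n\vert r}$, hence injective. The $\mrm{Aut}^\mrm{filt}$-action on $\widehat{\mrm{Vect}}_x(\bb{X})$ is the evident one: an automorphism $\varphi$ of $\widehat{\mathcal{O}}_{\bb{X},x}$ acts on a derivation $v$ by $\varphi\cdot v=\varphi\circ v\circ\varphi^{-1}$, which visibly preserves the filtration and is functorial in the truncations, giving $\rho_K\colon\mrm{Aut}^\mrm{filt}(\widehat{\mathcal{O}}_{\bb{X},x})\rta\mrm{Aut}(\widehat{\mrm{Vect}}_x(\bb{X}))$ as a map of pro-Lie supergroups. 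The compatibility condition — that $\mrm{Lie}(\rho_K)$ equals the adjoint action through $i$ — is the standard fact that $\tfrac{d}{dt}|_{t=0}\bigl(\exp(tw)\circ v\circ\exp(-tw)\bigr)=[w,v]$ in $\mrm{Der}(\widehat{\mathcal{O}}_{\bb{X},x})$, which one checks at each finite stage and then passes to the limit. (In the super setting one uses the even part of the Lie supergroup together with the odd tangent directions; the bracket identities are the same with Koszul signs, and since all structure maps are $\bb{Z}/2$-graded the signs take care of themselves.)

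The main obstacle — and the only genuinely non-formal point — is making precise the ``pro-'' bookkeeping so that the limit object really is a pro-sHC pair rather than merely a formal inverse system: one must check that the structure maps in the inverse system $\{(\mrm{Der}(\mathcal{O}_{\bb{X},x}/\mfrk{m}_x^{i}),\mrm{Aut}^\mrm{filt}(\mathcal{O}_{\bb{X},x}/\mfrk{m}_x^{i}))\}_i$ are morphisms of sHC pairs (i.e.\ the surjections $\mathcal{O}_{\bb{X},x}/\mfrk{m}_x^{i+1}\twoheadrightarrow\mathcal{O}_{\bb{X},x}/\mfrk{m}_x^{i}$ induce compatible truncation maps on derivations and on filtration-preserving automorphisms, intertwining $i$ and $\rho_K$), and that each finite stage is a bona fide sHC pair. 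For the latter, each $\mrm{Aut}^\mrm{filt}(\mathcal{O}_{\bb{X},x}/\mfrk{m}_x^{i})$ is a closed sub-supergroup of $\mrm{GL}(\mathcal{O}_{\bb{X},x}/\mfrk{m}_x^{i})$ (finite-dimensional since $\mathcal{O}_{\bb{X},x}/\mfrk{m}_x^{i}$ is a finite-dimensional superalgebra), so it is a Lie supergroup; then the argument is essentially that of Lemma \ref{lem-HCsubLie} / Example \ref{ex-HCpairSubgroup} applied to $(\mrm{Der}(\mathcal{O}_{\bb{X},x}/\mfrk{m}_x^{i}),\mrm{Aut}^\mrm{filt}(\mathcal{O}_{\bb{X},x}/\mfrk{m}_x^{i}))$, using that filtration-preserving derivations are stable under the adjoint action of filtration-preserving automorphisms. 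Everything else is a routine diagram chase through the inverse limit, so I would state the finite-stage claim carefully, invoke the earlier lemmas for it, and then simply remark that sHC-pair structure is preserved under filtered inverse limits along morphisms of sHC pairs, which is what ``pro-sHC pair'' means.
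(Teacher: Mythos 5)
Your argument is correct and is essentially the paper's own proof, just unwound in detail: the paper simply cites the pro-version of Example \ref{ex-HCpairSubgroup}, observing that $\mrm{Der}(\widehat{\mathcal{O}}_{\bb{X},x})$ is the Lie superalgebra of $\mrm{Aut}(\widehat{\mathcal{O}}_{\bb{X},x})$ and that the filtration-preserving automorphisms form a closed sub-pro-Lie supergroup, which is exactly the structure you verify stage by stage. The only quibble is your identification $\mrm{Der}(\widehat{\mathcal{O}}_{\bb{X},x})=\lim_i\mrm{Der}(\mathcal{O}_{\bb{X},x}/\mfrk{m}_x^i)$ — a general derivation only maps $\mfrk{m}_x^i$ into $\mfrk{m}_x^{i-1}$, so the pro-structure should be taken via the order filtration you also describe rather than via derivations of the truncations — but this is at the same level of informality as the paper and does not affect the argument.
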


\begin{proof}
This follows from the pro-version of Example \ref{ex-HCpairSubgroup}. 
Indeed, 
$\widehat{\mrm{Vect}}_x(\bb{X})=\mrm{Der}(\widehat{\mathcal{O}}_{\bb{X},x})$ 
is the Lie algebra of $\mrm{Aut}(\widehat{\mathcal{O}}_{\bb{X},x})$, 
and filtration preserving automorphisms are a sub-pro-Lie group. 
\end{proof}

As in the non-formal case, we would like to restrict to symplectic vector fields and linear symplectomorphisms. 

\begin{lem}\label{lem-PoissonSuperalgebra}
Let $(\bb{M},\omega)$ be a symplectic manifold and $x\in \bb{M}$ a point. 
The Poisson superalgebra structure on $\mathcal{O}_\bb{M}$ induces a Poisson superalgebra structure on $\widehat{\mathcal{O}}_{\bb{M},x}$.
\end{lem}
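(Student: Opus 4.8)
The plan is to show that the Poisson bracket on $\mathcal{O}_\bb{M}$ is continuous with respect to the $\mfrk{m}_x$-adic filtration, so that it passes to the completion. First I would recall from Lemma \ref{lem-superPoisson} that the bracket is given by $\{f,g\}_\omega=\phi_\omega^{-1}(df)(g)$, where $\phi_\omega\colon\mrm{Vect}(\bb{M})\cong\Omega^1(\bb{M})$ is the isomorphism induced by $\omega$. Since $\omega$ is a bidifferential-order-zero gadget (it and its inverse are $\mathcal{O}_\bb{M}$-linear bundle maps), the Hamiltonian vector field $X_f:=\phi_\omega^{-1}(df)$ is a first-order differential operator in $f$, and $\{f,g\}_\omega=X_f(g)$ is a bidifferential operator of order $(1,1)$. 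In particular, working in Darboux coordinates around $x$ supplied by Theorem \ref{thm-SuperDarboux}, the bracket has the explicit local form of Example \ref{ex-FormalPoisson}: it is a sum of terms $\pm\partial_{z_i}f\,\partial_{z_j}g$ with $\{z_i,z_j\}$ a constant. This makes the filtration estimate transparent.

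Next I would carry out the filtration estimate. The point $x$ corresponds to the origin in the Darboux chart, and $\mfrk{m}_x$ is generated by the coordinate functions $z_1,\dots,z_{2n+r}$; the completion is $\widehat{\mathcal{O}}_{\bb{M},x}\cong\widehat{\mathcal{O}}_{2n\vert a,b}$ as filtered superalgebras. Since each $\partial_{z_i}$ lowers $\mfrk{m}_x$-adic order by exactly one, if $f\in\mfrk{m}_x^p$ and $g\in\mfrk{m}_x^q$ then every term $\partial_{z_i}f\,\partial_{z_j}g$ lies in $\mfrk{m}_x^{p-1}\cdot\mfrk{m}_x^{q-1}\subseteq\mfrk{m}_x^{p+q-2}$, hence $\{f,g\}_\omega\in\mfrk{m}_x^{p+q-2}$. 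Thus the bracket is jointly continuous for the $\mfrk{m}_x$-adic topology (indeed it shifts the filtration by $-2$), so it extends uniquely by continuity to a bilinear map $\{-,-\}\colon\widehat{\mathcal{O}}_{\bb{M},x}\times\widehat{\mathcal{O}}_{\bb{M},x}\rta\widehat{\mathcal{O}}_{\bb{M},x}$. The super-skew-symmetry, super-Jacobi identity, and super-Leibniz rule are closed conditions (equalities of continuous polynomial expressions), so they persist on the completion since $\mathcal{O}_{\bb{M},x}$ — and a fortiori the polynomial subalgebra in the Darboux coordinates — is dense. Therefore $\widehat{\mathcal{O}}_{\bb{M},x}$ is a Poisson superalgebra, and the map $\mathcal{O}_{\bb{M},x}\rta\widehat{\mathcal{O}}_{\bb{M},x}$ is a map of Poisson superalgebras.

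Finally, I would remark that one should check the construction is independent of the chosen Darboux chart: any two Darboux charts at $x$ differ by a symplectomorphism fixing $x$, which induces a filtered Poisson automorphism of $\mathcal{O}_{\bb{M},x}$ and hence a filtered Poisson automorphism of the completion, so the induced bracket on $\widehat{\mathcal{O}}_{\bb{M},x}$ does not depend on the choice; alternatively, and more cleanly, one avoids charts entirely by noting that $\{-,-\}_\omega$ is an intrinsically defined bidifferential operator on $\mathcal{O}_\bb{M}$, bidifferential operators act on germs, and the order bound above shows directly that this action is $\mfrk{m}_x$-adically continuous. The main obstacle is really just bookkeeping: making the claim ``the Poisson bracket is a bidifferential operator of bidegree $(1,1)$'' precise in the $\bb{Z}/2$-graded setting — one must be careful that $\phi_\omega$ and $\phi_\omega^{-1}$ are $\mathcal{O}_\bb{M}$-linear (so contribute no differentiation) and that $d$ raises differential order by one — after which the filtration estimate and the extension-by-continuity argument are routine.
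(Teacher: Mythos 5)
Your proof is correct. The paper itself does not argue this lemma directly: its entire proof is a citation to \cite[\S 1.2]{EtingofKazhdan}, where the compatibility of Poisson structures with formal completion is established in the algebraic setting. Your argument instead makes the mechanism explicit and self-contained: you isolate the one property of $\{-,-\}_\omega$ that matters, namely that $\phi_\omega^{-1}$ is $\mathcal{O}_\bb{M}$-linear so the bracket is a bidifferential operator of bidegree $(1,1)$, and then the filtration shift $\{\mfrk{m}_x^p,\mfrk{m}_x^q\}\subseteq\mfrk{m}_x^{p+q-2}$ gives well-defined induced brackets on the quotients $\mathcal{O}_{\bb{M},x}/\mfrk{m}_x^i$ and hence on the inverse limit, with the Poisson axioms persisting by continuity and density. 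The Darboux-coordinate computation is a convenient but inessential crutch, as you note yourself in the final paragraph; the intrinsic formulation via locality of bidifferential operators is the cleaner route and sidesteps the chart-independence check. What your approach buys is transparency about where smoothness versus formal algebra enters (nowhere essential — only the order bound on the operator is used), which is useful given that the paper later applies the same completion in the pro-finite-dimensional module setting of Convention \ref{conv-mainHCpair}; what the citation buys the author is brevity and a uniform reference that also covers the filtered automorphism group $\mrm{Aut}_{2n\vert a,b}$.
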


\begin{proof}
This follows from \cite[\S 1.2]{EtingofKazhdan}. 
\end{proof}

In analogy with Lemma \ref{lem-SympVFasDer}, we make the following definition.

\begin{defn}
Let $(\bb{M},\omega)$ be a symplectic supermanifold and $x\in \bb{M}$ a point. 
The Lie superalgebra of \emph{symplectic formal vector fields} on $\bb{M}$ at $x$ is 
the Lie superalgebra of Poisson derivations of the Poisson superalgebra 
$(\widehat{\mathcal{O}}_{\bb{M},x},\{-,-\}_\omega)$, 
\[\widehat{\mrm{Vect}}^\mrm{symp}_x(\bb{M},\omega)\colon =\mrm{Der}_{\omega}(\widehat{\mathcal{O}}_{\bb{M},x}).\]
\end{defn}

Consider the case when $\bb{M}$ is affine. 
That is, take a symplectic supermanifold of the form $(\bb{R}^{2n\vert r},\omega_Q)$ from Example \ref{ex-LocalSymplecticStructure}. 
We have a formal version of Corollary \ref{cor-HCpairSymp}. 

\begin{lem}
There is a unique sHC pair structure on 
\[(\widehat{\mrm{Vect}}_0^\mrm{symp}(\bb{R}^{2n\vert r},\omega_Q),\mrm{Sp}(2n\vert a,b))\]
so that the inclusion of symplectic vector fields and the inclusion of linear symplectic automorphisms induce morphisms of sHC pairs

\begin{align*}
\left(\widehat{\mrm{Vect}}_0^\mrm{symp}(\bb{R}^{2n\vert r},\omega_Q),\mrm{Sp}(2n\vert a,b)\right)&\rta  \left(\widehat{\mrm{Vect}}_0(\bb{R}^{2n\vert r},\omega_Q),\mrm{Sp}(2n\vert a,b)\right)\\
&\rta \left(\widehat{\mrm{Vect}}_0(\bb{R}^{2n\vert r},\omega_Q), \mrm{Aut}^\mrm{filt}(\widehat{\mathcal{O}}_{\bb{R}^{2n\vert r},0})\right).
\end{align*}

\end{lem}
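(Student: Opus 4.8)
The plan is to deduce this from the tools already assembled, rather than constructing the sHC pair structure from scratch. We already have the pro-sHC pair $(\widehat{\mrm{Vect}}_0(\bb{R}^{2n\vert r},\omega_Q),\mrm{Aut}^\mrm{filt}(\widehat{\mathcal{O}}_{\bb{R}^{2n\vert r},0}))$ by the preceding lemma, and by (the pro-version of) Example \ref{ex-HCpairSubgroup} applied to the closed sub-supergroup $\mrm{Sp}(2n\vert a,b)\subset\mrm{Aut}^\mrm{filt}(\widehat{\mathcal{O}}_{\bb{R}^{2n\vert r},0})$, we obtain an sHC pair structure on $(\widehat{\mrm{Vect}}_0(\bb{R}^{2n\vert r},\omega_Q),\mrm{Sp}(2n\vert a,b))$ together with the second morphism of the displayed composition. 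So the content of the lemma is really: restrict along the sub-Lie-superalgebra $j\colon\widehat{\mrm{Vect}}_0^\mrm{symp}(\bb{R}^{2n\vert r},\omega_Q)\hookrightarrow\widehat{\mrm{Vect}}_0(\bb{R}^{2n\vert r},\omega_Q)$ using Lemma \ref{lem-HCsubLie} (in its evident pro-version).

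The key steps, in order, are as follows. First I would check that symplectic formal vector fields form a Lie sub-superalgebra of all formal vector fields; this is immediate since Poisson derivations of $(\widehat{\mathcal{O}}_{\bb{R}^{2n\vert r},0},\{-,-\}_\omega)$ are closed under the commutator bracket, the Poisson superalgebra structure being supplied by Lemma \ref{lem-PoissonSuperalgebra}. Second, I would verify the hypothesis of Lemma \ref{lem-HCsubLie}: the map $\mrm{Lie}(\mrm{Sp}(2n\vert a,b))\rta\widehat{\mrm{Vect}}_0(\bb{R}^{2n\vert r},\omega_Q)$ factors through $\widehat{\mrm{Vect}}_0^\mrm{symp}$. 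This holds because linear symplectomorphisms preserve $\omega_Q$, hence their infinitesimal generators — linear vector fields whose flow preserves $\omega_Q$ — are in particular symplectic vector fields; concretely, a linear vector field lying in $\mrm{Lie}(\mrm{Sp}(2n\vert a,b))$ satisfies $L_v\omega_Q=0$, so $\phi_\omega(v)$ is closed, so $v$ acts by a Poisson derivation. Third, I invoke Lemma \ref{lem-HCsubLie} to get the unique sHC pair structure on $(\widehat{\mrm{Vect}}_0^\mrm{symp}(\bb{R}^{2n\vert r},\omega_Q),\mrm{Sp}(2n\vert a,b))$ making $(j,\mrm{Id})$ a morphism of sHC pairs, which is precisely the first morphism in the displayed composition; composing with the morphism from Example \ref{ex-HCpairSubgroup} gives the second.

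The main obstacle is bookkeeping around the pro-structures: Lemma \ref{lem-HCsubLie} and Example \ref{ex-HCpairSubgroup} are stated for honest sHC pairs, and here $\mrm{Sp}(2n\vert a,b)$ is finite-dimensional but $\widehat{\mrm{Vect}}_0$ and $\widehat{\mrm{Vect}}_0^\mrm{symp}$ are pro-objects (inverse limits of the truncations $\mrm{Der}(\widehat{\mathcal{O}}/\mfrk{m}^i)$), so one must check that the filtration on $\widehat{\mrm{Vect}}_0$ by order of vanishing restricts to one on $\widehat{\mrm{Vect}}_0^\mrm{symp}$ and that the $\mrm{Sp}(2n\vert a,b)$-action is filtration-preserving — both true because the $\mrm{Sp}$-action is by linear changes of coordinates, hence respects the $\mfrk{m}$-adic filtration and the symplectic condition simultaneously. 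Granting the (routine) pro-enhancements of the cited lemmas, the uniqueness clause is inherited verbatim from Lemma \ref{lem-HCsubLie}, since any sHC structure on $(\widehat{\mrm{Vect}}_0^\mrm{symp},\mrm{Sp}(2n\vert a,b))$ for which $(j,\mrm{Id})$ is a morphism of sHC pairs is forced to have the adjoint action $x(g)=\frac{d}{dt}\big|_{t=0}\exp(tx)\cdot g$ already determined on the ambient pair.
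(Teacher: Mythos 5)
Your proposal is correct and follows exactly the paper's route: the paper's proof is the single line ``This follows from the pro-version of Lemma \ref{lem-HCsubLie} and Example \ref{ex-HCpairSubgroup}.'' You have simply filled in the details (the factoring of $\mrm{Lie}(\mrm{Sp}(2n\vert a,b))$ through symplectic vector fields and the compatibility with the pro-structure) that the paper leaves implicit.
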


\begin{proof}
This follows from the pro-version of Lemma \ref{lem-HCsubLie} and Example \ref{ex-HCpairSubgroup}.
\end{proof}

\begin{convention}\label{conv-mainHCpair}
For the symplectic supermanifold $(\bb{R}^{2n\vert r},\omega_Q)$ with $Q$ having signature $(a,b)$, 
we set the following notation:

\begin{itemize}

\item $\mrm{Aut}_{2n\vert a,b}$ is the pro-supergroup of filtration preserving automorphisms of Poisson superalgebras 
$\mrm{Aut}_\mrm{Pois}^\mrm{filt}(\widehat{\mathcal{O}}_{\bb{R}^{2n\vert r},0},\omega_Q)$,

\item $\widehat{\mathcal{O}}_{2n\vert a,b}$ is the Poisson superalgebra $(\widehat{\mathcal{O}}_{\bb{R}^{2n\vert r},0},\{-,-\}_{\omega_Q})$, and

\item $\mfrk{g}_{2n\vert a,b}$ is the Lie superalgebra $\widehat{\mrm{Vect}}_0^\mrm{symp}(\bb{R}^{2n\vert r},\omega_Q)$.

\end{itemize}
\end{convention}

The pair $(\mfrk{g}_{2n\vert a,b},\mrm{Sp}(2n\vert a,b))$ will be the main sHC pair of interest to us. 

\subsection{Category of Manifolds}

In the Borel construction, 
one considers the category of principal bundles. 
Analogously, we will make use of a category of principal $(\mfrk{g},K)$-bundles.

Following \cite[Def. 1.5]{GGW}, we define principal bundles for Harish-Chandra pairs as follows. 

\begin{defn}
Let $\bb{X}$ be a supermanifold. Let $(\mfrk{g}, K)$ be a super-Harish-Chandra pair. 
A $(\mfrk{g},K)$-\emph{principal bundle} over $\bb{X}$ is 
a principal $K$-bundle $P\rta \bb{X}$ together with a $K$-invariant $\mfrk{g}$-valued 1-form~${\nu\in\Omega^1(P;\mfrk{g})}$ 
such that

\begin{itemize}

\item for all $a\in\mrm{Lie}(K)$, 
we have $\nu(\zeta_a)=a$ where $\zeta_a$ denotes the induced vector field on $P$, and 

\item $\nu$ satisfies the Maurer-Cartan equation

\[d_\mrm{dR}\omega+\frac{1}{2}[\nu,\nu]=0.\]

\end{itemize}

We let $\msf{Bun}_{(\mfrk{g},K)}^\mrm{flat}$ denote the category of $(\mfrk{g},K)$-principal bundles and morphisms $(P,\omega)\rta(P',\omega')$ bundle maps $F\colon P\rta P'$ so that $F^*\omega'=\omega$.
\end{defn}

\begin{ex}
For the sHC pair $({\mrm{Lie}(K)},K)$, the notion of a principal $({\mrm{Lie}(K)},K)$-bundle recovers the notion of a principal $K$-bundle with connection.
\end{ex}

\begin{ex}
If $\mfrk{g}$ is the Lie algebra of a Lie group $G$, 
and $K\subset G$ is a closed subgroup, 
then a principal $(\mfrk{g},K)$-bundle is the same as a principal $G$-bundle with connection, 
together with a reduction of structure group from $G$ to $K$. 
See \cite[Pg. 8]{GGW}.
\end{ex}

As noted in Convention \ref{conv-mainHCpair}, 
the sHC pair of interest to us is $\gK$. 
Let $(\bb{M},\omega)$ be a symplectic supermanifold of type $(2n\vert a,b)$.
Recall from Lemma \ref{lem-SymplecticFrame}, 
that there is a principal $\mrm{Sp}(2n\vert a,b)$-bundle $\mrm{Fr}^{\mrm{Sp}(2n\vert a,b)}_\bb{M}$ on $\bb{M}$. 
We will construct a $\gK$-principal bundle structure on $\mrm{Fr}^{\mrm{Sp}(2n\vert a,b)}_\bb{M}$. 
We will do this in two steps: 

\begin{itemize}

\item[\underline{Step 1.}] first we construct a principal $\mrm{Aut}_{2n\vert a,b}$-bundle 
$\bb{M}^\mrm{coor}$ on $\bb{M}$, 
and give it a $\mfrk{g}_{2n\vert a,b}$-valued connection; 

\item[\underline{Step 2.}] second we move this structure from~$\bb{M}^\mrm{coor}$ to $\mrm{Fr}^{\mrm{Sp}(2n\vert a,b)}_\bb{M}$ using a ``formal exponential."

\end{itemize}

The results of this section are summarized in the following theorem. 

\begin{thm}\label{thm-CategoryofMfldsGoal}
There is a category with objects symplectic supermanifolds $(\bb{M},\omega)$ of type $(2n\vert a,b)$ equipped with a $\gK$-bundle structure on $\mrm{Fr}^{\mrm{Sp}(2n\vert a,b)}_\bb{M}$, 
which we denote $\msf{sGK}^=_{2n\vert a,b}$. 
A choice of formal exponential defines a lift of an object in 
$\msf{sMfld}_{2n\vert 2,b}$ to an object of $\msf{sGK}^=_{2n\vert a,b}$. 

Moreover, there is a functor

\[\msf{sGK}^=_{2n\vert a,b}\rta \msf{Bun}^\mrm{flat}_{\gK}\]

living above $\msf{sMfld}_{2n\vert a,b}$. 
\end{thm}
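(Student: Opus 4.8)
The plan is to carry out the two-step construction advertised in the text, and then package it categorically. First I would construct the \emph{formal coordinate bundle} $\bb{M}^\mrm{coor}\rta\bb{M}$: its fiber over $x\in\bb{M}$ is the set of symplectomorphisms $\widehat{\bb{D}}^{2n\vert r}\xrta{\sim}(\widehat{\bb{M}}_x,\omega)$ from the standard formal super-disk (with Poisson structure $\widehat{\mathcal{O}}_{2n\vert a,b}$ of Example \ref{ex-FormalPoisson}) to the formal neighborhood of $\bb{M}$ at $x$ with its induced Poisson superalgebra structure (Lemma \ref{lem-PoissonSuperalgebra}). The super-Darboux theorem (Theorem \ref{thm-SuperDarboux}) guarantees this fiber is nonempty, and precomposition makes it a torsor for $\mrm{Aut}_{2n\vert a,b}$, so $\bb{M}^\mrm{coor}$ is a principal $\mrm{Aut}_{2n\vert a,b}$-bundle (a pro-bundle; one works at each level of the filtration and takes the limit). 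The tautological $\mfrk{g}_{2n\vert a,b}$-valued $1$-form $\nu^\mrm{coor}$ is defined exactly as in the non-super case (\cite[\S2.1.1]{GGW}, \cite[\S3.1]{BK}): differentiating the identification of formal coordinates along a path in $\bb{M}$ produces a formal symplectic vector field, i.e.\ an element of $\mfrk{g}_{2n\vert a,b}$. One then checks the two axioms of a $(\mfrk{g},K)$-principal bundle: the reproducing property $\nu^\mrm{coor}(\zeta_a)=a$ is immediate from how the $\mrm{Aut}$-action differentiates, and the Maurer--Cartan equation $d\nu^\mrm{coor}+\tfrac12[\nu^\mrm{coor},\nu^\mrm{coor}]=0$ is a formal flatness statement that follows because the identifications are genuine symplectomorphisms of (germs of) a fixed manifold, so the associated connection is integrable — this is the super-symplectic avatar of the classical computation and is where the Poisson-superalgebra bookkeeping (signs for odd coordinates) must be checked carefully.

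Next, Step 2: transport this structure along a formal exponential $\sigma$ (Definition \ref{def-FormalExponential}). A formal exponential should be a section, over $\mrm{Fr}^{\mrm{Sp}(2n\vert a,b)}_\bb{M}$, of the bundle whose fiber is $\mrm{Aut}^\mrm{filt}/\mrm{Aut}_{2n\vert a,b}$-type data identifying a linear symplectic frame with a full formal coordinate system extending it; equivalently, $\sigma$ gives a reduction of the (pro-)structure group of $\bb{M}^\mrm{coor}$ from $\mrm{Aut}_{2n\vert a,b}$ to $\mrm{Sp}(2n\vert a,b)$ together with an isomorphism of the reduced bundle with $\mrm{Fr}^{\mrm{Sp}(2n\vert a,b)}_\bb{M}$. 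Pulling back $\nu^\mrm{coor}$ along $\sigma$ and restricting to $\mrm{Sp}(2n\vert a,b)$-invariant directions yields a $\mfrk{g}_{2n\vert a,b}$-valued $1$-form on $\mrm{Fr}^{\mrm{Sp}(2n\vert a,b)}_\bb{M}$; since the sHC pair $\gK$ sits inside $(\mfrk{g}_{2n\vert a,b},\mrm{Aut}_{2n\vert a,b})$ via the morphism of sHC pairs constructed just above Convention \ref{conv-mainHCpair}, the two axioms are inherited, giving the $\gK$-principal bundle structure. This defines the object-level lift; a choice of $\sigma$ produces an object of $\msf{sGK}^=_{2n\vert a,b}$ from one of $\msf{sMfld}_{2n\vert a,b}$.

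Finally, functoriality. A morphism in $\msf{sMfld}_{2n\vert a,b}$ is a local symplectomorphism $\psi\colon\bb{M}\rta\bb{M}'$; it induces $\bb{M}^\mrm{coor}\rta(\bb{M}')^\mrm{coor}$ by postcomposing formal-coordinate identifications with the germ of $\psi$, and because $\psi$ preserves symplectic (hence Poisson) structure this map is $\mrm{Aut}_{2n\vert a,b}$-equivariant and pulls $\nu'^{\,\mrm{coor}}$ back to $\nu^\mrm{coor}$. A morphism in $\msf{sGK}^=_{2n\vert a,b}$ is then defined to be such a $\psi$ together with the compatible map of $\gK$-bundles on the frame bundles (i.e.\ one asks $\psi$ to intertwine the chosen formal-exponential data), and the forgetful assignment to $\msf{Bun}^\mrm{flat}_{\gK}$ sending $(\bb{M},\omega,\sigma)$ to $(\mrm{Fr}^{\mrm{Sp}(2n\vert a,b)}_\bb{M},\nu_\sigma)$ is manifestly a functor over $\msf{sMfld}_{2n\vert a,b}$ by construction. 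The main obstacle I anticipate is not the categorical packaging but verifying the Maurer--Cartan/flatness equation for $\nu^\mrm{coor}$ in the super-symplectic setting — ensuring that the Koszul signs from the odd coordinates and the odd part of the Poisson bracket (the $\tfrac{\ep_i}{2}\theta_i^2$ terms of Example \ref{ex-LocalSymplecticStructure}) are consistent, and that everything is compatible with the pro-structure (convergence of the limit over the $\mfrk{m}_x$-adic filtration). I would handle this by reducing, via Theorem \ref{thm-SuperDarboux}, to the standard local model $(\bb{R}^{2n\vert r},\omega_Q)$ and checking the identity on generators there, exactly paralleling \cite[\S3.1]{BK}.
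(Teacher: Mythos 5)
Your proposal follows essentially the same two-step route as the paper: construct the formal symplectic coordinate bundle $\bb{M}^\mrm{coor}$ with its tautological flat $\mfrk{g}_{2n\vert a,b}$-valued connection, then use a formal exponential (a section of $\bb{M}^\mrm{coor}/\mrm{Sp}(2n\vert a,b)\rta\bb{M}$) to lift to an $\mrm{Sp}(2n\vert a,b)$-equivariant map $\Fr_\bb{M}\rta\bb{M}^\mrm{coor}$ and pull back the connection, with functoriality coming from the induced maps $f^\mrm{coor}$ exactly as you describe. The one step you flag as delicate --- the Maurer--Cartan equation --- is handled in the paper without any local sign-checking: $\nu^\mrm{coor}$ is by construction the inverse of a Lie superalgebra map $\mfrk{g}_{2n\vert a,b}\rta\mrm{Vect}(\bb{M}^\mrm{coor})$, so flatness is automatic.
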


\subsubsection{Formal Symplectic Coordinate Bundles}\label{subsubsec-FormalCoord}

Given a symplectic supermanifold $(\bb{M},\omega)$ of type $(2n\vert a,b)$, 
we will construct a $\mrm{Aut}_{2n\vert a,b}$-bundle $\bb{M}^\mrm{coor}$ on $\bb{M}$ 
with a $\mfrk{g}_{2n\vert a,b}$-valued connection. 
Since $\mrm{Aut}_{2n\vert a,b}$ is a pro-supergroup, $\bb{M}^\mrm{coor}$ will  be a pro-supermanifold. 

For $x\in\bb{M}$, by Lemma \ref{lem-PoissonSuperalgebra}, 
$\widehat{\mathcal{O}}_{\bb{M},x}$ has the structure of a Poisson superalgebra. 
In analogy with \cite[\S 2.2.1]{GGW}, 
we define the \emph{formal symplectic coordinate bundle} of $(\bb{M},\omega)$ 
to be the bundle with fiber over $x\in\bb{M}$ given by the group of isomorphisms of Poisson superalgebras,

\[\bb{M}^\mrm{coor}_x=\mrm{Isom}_{\mrm{Pois}}\left(\widehat{\mathcal{O}}_{\bb{M},x},\widehat{\mathcal{O}}_{2n\vert a,b}\right).\]

Since $\bb{M}$ is of type $(2n\vert a,b)$, we have an isomorphism of Poisson algebras 
$\widehat{\mathcal{O}}_{\bb{M},x}\rta\widehat{\mathcal{O}}_{2n\vert a,b}$, 
so that $\bb{M}^\mrm{coor}_x$ is nonempty. 
Moreover, this implies that $\bb{M}^\mrm{coor}_x$ is non-canonically isomorphic to 
\[\mrm{Isom}_{\mrm{Pois}}\left(\widehat{\mathcal{O}}_{2n\vert a,b},\widehat{\mathcal{O}}_{2n\vert a,b}\right)=\mrm{Aut}_{2n\vert a,b}.\]

\noindent See Convention \ref{conv-mainHCpair}.

As in \cite[\S 3.1]{BK}, one can construct $\bb{M}^\mrm{coor}$ by the functor $\mathcal{F}_{\bb{M}^\mrm{coor}}$ it represents. 
For $T$ another supermanifold with a map $\eta\colon T\rta\bb{M}$, 
let $T_\eta\subset T\times\bb{M}$ be the submanifold of pairs $(t,\eta(t))$. 
Let $\widehat{\mathcal{O}}_{\bb{M},\eta}$ be the ring of formal germs of functions on $T\times\bb{M}$ near $T_\eta$. 
The functor $\bb{M}^\mrm{coor}$ represents sends an affine 
space $T$ to the set of pairs 
\[\mathcal{F}_{\bb{M}^\mrm{coor}}(T)=\{(\eta,\phi:\eta\colon T\rta\bb{M}, \phi\colon\widehat{\mathcal{O}}_{\bb{M},\eta}\cong \mathcal{O}_T\hat{\otimes}\AQ\}.\] 

Define an action of $\mrm{Aut}_{2n\vert a,b}$ on $\bb{M}^\mrm{coor}_x$ by post-composition. 
Under this action, $\bb{M}^\mrm{coor}\rta\bb{M}$ becomes a principal $\mrm{Aut}_{2n\vert a,b}$-bundle. 

Analogous to how a smooth map induces a map on frame bundles, 
we have the following.

\begin{lem}\label{lem-CoorFunctorial}
Let $f\colon \bb{M}_1\rta\bb{M}_2$ be a morphism in $\msf{sMfld}_{2n\vert a,b}$. 
There is a morphism 

\[f^\mrm{coor}\colon\bb{M}_1^\mrm{coor}\rta\bb{M}_2^\mrm{coor}\]

of $\mrm{Aut}_{2n\vert a,b}$-bundles.
\end{lem}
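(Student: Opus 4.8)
The plan is to produce $f^{\mrm{coor}}$ by working with the functors of points, using the description of $\bb{M}^\mrm{coor}$ given just above via $\mathcal{F}_{\bb{M}^\mrm{coor}}$. Given a test supermanifold $T$, an element of $\mathcal{F}_{\bb{M}_1^\mrm{coor}}(T)$ is a pair $(\eta,\phi)$ with $\eta\colon T\rta\bb{M}_1$ and $\phi\colon\widehat{\mathcal{O}}_{\bb{M}_1,\eta}\cong\mathcal{O}_T\hat{\otimes}\AQ$. I would set $f^{\mrm{coor}}_T(\eta,\phi)=(f\circ\eta,\ \phi\circ(\hat{f}_\eta)^{-1})$, where $\hat{f}_\eta\colon\widehat{\mathcal{O}}_{\bb{M}_2,f\circ\eta}\rta\widehat{\mathcal{O}}_{\bb{M}_1,\eta}$ is the map on formal germs along the graph induced by pulling back functions along $f$. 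The key point making this well-defined is that $f$ is a \emph{morphism in $\msf{sMfld}_{2n\vert a,b}$}, i.e.\ a local symplectomorphism: since $f$ is a local diffeomorphism onto its image and intertwines the symplectic forms, the induced map $\hat f_\eta$ on completed germs is an isomorphism of Poisson superalgebras. Hence $\phi\circ(\hat f_\eta)^{-1}$ is again an isomorphism of Poisson superalgebras of the required source and target, so it lands in $\mathcal{F}_{\bb{M}_2^\mrm{coor}}(T)$.

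Next I would check naturality in $T$, so that the collection $\{f^{\mrm{coor}}_T\}$ is a natural transformation of functors and therefore, by Yoneda, corresponds to an actual map of (pro-)supermanifolds $f^{\mrm{coor}}\colon\bb{M}_1^{\mrm{coor}}\rta\bb{M}_2^{\mrm{coor}}$. Then I would verify the two structural properties. Compatibility with the bundle projections is immediate from the first coordinate: the composite $\bb{M}_1^{\mrm{coor}}\rta\bb{M}_2^{\mrm{coor}}\rta\bb{M}_2$ sends $(\eta,\phi)$ to $f\circ\eta$, which is exactly $f$ composed with $\bb{M}_1^{\mrm{coor}}\rta\bb{M}_1$. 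Equivariance for the $\mrm{Aut}_{2n\vert a,b}$-actions is equally formal: the action is by post-composition $\psi\cdot(\eta,\phi)=(\eta,\psi\circ\phi)$, and post-composition with $\psi$ commutes with pre-composition by $(\hat f_\eta)^{-1}$, so $f^{\mrm{coor}}(\psi\cdot(\eta,\phi))=\psi\cdot f^{\mrm{coor}}(\eta,\phi)$. Together these say $f^{\mrm{coor}}$ is a morphism of principal $\mrm{Aut}_{2n\vert a,b}$-bundles over $f$, which is the claim.

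Functoriality (i.e.\ $(g\circ f)^{\mrm{coor}}=g^{\mrm{coor}}\circ f^{\mrm{coor}}$ and $\mrm{Id}^{\mrm{coor}}=\mrm{Id}$) follows from the corresponding statements for $\hat{(-)}_\eta$, since $\widehat{(g\circ f)}_\eta=\hat f_\eta\circ\hat g_{f\circ\eta}$ on completed germs, and this I would record as a remark or fold into the proof. The main obstacle I expect is purely technical rather than conceptual: one must make precise the formal-germ construction $\widehat{\mathcal{O}}_{\bb{M},\eta}$ and the induced map $\hat f_\eta$ in the \emph{relative} (family over $T$) and \emph{pro-}supermanifold settings simultaneously, and confirm that "$f$ a local symplectomorphism'' genuinely suffices to make $\hat f_\eta$ a Poisson-algebra \emph{isomorphism} (not merely a map) — the issue being that $f$ need not be globally injective, so one must localize to a neighborhood of the graph $T_\eta$ where $f$ is a diffeomorphism onto its image before completing. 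This is exactly the super-symplectic analogue of the construction in \cite[\S 3.1]{BK} and \cite[\S 2.2.1]{GGW}, and once the formal-geometry bookkeeping is set up the verifications above are straightforward.
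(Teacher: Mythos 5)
Your proposal is essentially the paper's own proof: both define $f^{\mrm{coor}}$ on the functor of points by sending $(\eta,\phi)$ to a pair over $f\circ\eta$ obtained from the map on completed germs along the graph induced by $f^*$, pass through Yoneda, and note that equivariance under post-composition by $\mrm{Aut}_{2n\vert a,b}$ and compatibility with the projections are formal, with your emphasis that morphisms in $\msf{sMfld}_{2n\vert a,b}$ are local symplectomorphisms being exactly the right justification that the induced map on completions is a Poisson isomorphism. One small slip to fix: since $\phi$ has source $\widehat{\mathcal{O}}_{\bb{M}_1,\eta}$ and your $\hat{f}_\eta$ has target $\widehat{\mathcal{O}}_{\bb{M}_1,\eta}$, the correct composite is $\phi\circ\hat{f}_\eta$ (precomposition with the pullback, as in the paper's $\phi\circ\bar{f}^*$), not $\phi\circ(\hat{f}_\eta)^{-1}$, which does not typecheck.
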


\begin{proof}
We produce a map 
$f^\mrm{coor}\colon\bb{M}_1^\mrm{coor}\rta\bb{M}_2^\mrm{coor}$ 
 from a natural transformation between the functors 
$\mathcal{F}_{\bb{M}_1^\mrm{coor}}$ and $\mathcal{F}_{\bb{M}_2^\mrm{coor}}$ 
that these spaces represent. 
Fix a supermanifold $T$. 
Let $(\eta,\phi)\in \mathcal{F}_{\bb{M}_1^\mrm{coor}}(T)$. 
Set $\eta'$ to be the composite 

\[T\xrta{\eta} \bb{M}_1\xrta{f}\bb{M}_2.\] 

Then $f^*\colon\mathcal{O}_{\bb{M}_2}\rta\mathcal{O}_{\bb{M}_1}$ descends to an equivalence on completions 
\[\bar{f}^*\colon \widehat{\mathcal{O}}_{\bb{M}_2,\eta'}\rta \widehat{\mathcal{O}}_{\bb{M}_1,\eta}\] 
since $(\mrm{Id}_T\times f)(T_\eta)=T_{\eta'}$. 
Thus, $(\eta',\phi\circ \bar{f}^*)$ defines an element of $\mathcal{F}_{\bb{M}_2^\mrm{coor}}(T)$. 
This assignment on objects extends to a natural transformation, 
resulting in a morphism on representing spaces 
$f^\mrm{coor}\colon\bb{M}_1^\mrm{coor}\rta\bb{M}_2^\mrm{coor}$. 
\end{proof}

As in \cite[Lem. 3.2]{BK} and \cite[Def. 2.2]{GGW}, we have the following.

\begin{lem}\label{lem-McoorConn}
Let $(\bb{M},\omega)$ be a symplectic supermanifold of type $(2n\vert a,b)$. 
There is a connection 1-form 
$\nu^\mrm{coor}\in\Omega^1(\bb{M}^\mrm{coor};\mfrk{g}_{2n\vert a,b})$.
\end{lem}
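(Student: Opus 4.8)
The plan is to construct $\nu^{\mathrm{coor}}$ by first building a tautological (Maurer-Cartan) $1$-form with values in formal vector fields, and then checking that symplecticity of the coordinates cuts it down to take values in $\mfrk{g}_{2n\vert a,b}$. Concretely, a point $p\in\bb{M}^{\mathrm{coor}}$ lying over $x\in\bb{M}$ is an isomorphism of Poisson superalgebras $\phi_p\colon\widehat{\mathcal{O}}_{\bb{M},x}\xrightarrow{\sim}\widehat{\mathcal{O}}_{2n\vert a,b}$. Given a tangent vector $\xi\in T_p\bb{M}^{\mathrm{coor}}$, its image $d\pi(\xi)\in T_x\bb{M}$ can be differentiated: moving $x$ infinitesimally in the direction $d\pi(\xi)$ and transporting the germ of the identity coordinate system along, one obtains a derivation of $\widehat{\mathcal{O}}_{2n\vert a,b}$, i.e.\ an element of $\widehat{\mrm{Vect}}_0(\bb{R}^{2n\vert r})$. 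I would set up this assignment precisely by working on the representing functor $\mathcal{F}_{\bb{M}^{\mathrm{coor}}}$: a $T$-point is a pair $(\eta,\phi)$ with $\phi\colon\widehat{\mathcal{O}}_{\bb{M},\eta}\cong\mathcal{O}_T\hat\otimes\AQ$, and the de Rham differential along $T$ of $\phi$ (relative to the fixed coordinates on the target) produces a $1$-form on $T$ valued in derivations of $\AQ$, functorially in $T$; this is exactly $\nu^{\mathrm{coor}}$.

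The key steps, in order: (1) define the tautological form $\tilde\nu\in\Omega^1(\bb{M}^{\mathrm{coor}};\widehat{\mrm{Vect}}_0(\bb{R}^{2n\vert r}))$ via the functor-of-points description above, mirroring \cite[Lem.~3.2]{BK} and \cite[Def.~2.2]{GGW}; (2) verify it satisfies the two axioms of a connection $1$-form, namely that it restores fundamental vector fields (this is immediate from the definition: differentiating the $\mrm{Aut}_{2n\vert a,b}$-action recovers the Lie-algebra element) and that it satisfies the Maurer-Cartan equation $d\tilde\nu+\tfrac12[\tilde\nu,\tilde\nu]=0$ (this is the usual ``flatness of the tautological connection'' computation — differentiating the cocycle condition $\phi_{p\cdot g}=g^{-1}\cdot\phi_p$); (3) show $\tilde\nu$ actually lands in the symplectic formal vector fields $\mfrk{g}_{2n\vert a,b}=\widehat{\mrm{Vect}}^{\mathrm{symp}}_0(\bb{R}^{2n\vert r},\omega_Q)$. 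For step (3) the point is that each $\phi_p$ is a \emph{Poisson} isomorphism, so as $x$ varies the transported symplectic form is constantly $\omega_Q$; hence the infinitesimal variation is a vector field whose Lie derivative annihilates $\omega_Q$ under the identification $\phi_\omega$ of Lemma~\ref{lem-superPoisson}, i.e.\ a Poisson derivation. Equivalently, one differentiates the identity $\phi_p(\{f,g\}_\omega)=\{\phi_p(f),\phi_p(g)\}_{\omega_Q}$ in the $T$-direction and reads off that $\tilde\nu(\xi)$ is a derivation of the bracket on $\AQ$.

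I would carry out steps (1)--(2) by directly transcribing the non-super, non-symplectic arguments of \cite{GGW,BK} into the super-symplectic setting, since none of those manipulations see the extra odd coordinates or the quadratic form $Q$ — they are formal statements about tautological connections on coordinate bundles, and the pro-structure is handled level by level in the $\mfrk{m}_0$-adic filtration exactly as in \cite[\S3.1]{BK}. The main obstacle is step (3): making precise the claim that ``differentiating a family of Poisson isomorphisms yields a Poisson derivation.'' This requires care about what it means to differentiate along $T$ a $T$-family of algebra isomorphisms valued in a pro-ring, and about signs coming from the $\bb{Z}/2$-grading when one applies the graded Leibniz rule to $\{\phi(f),\phi(g)\}$. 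Once the identity $\partial_\xi\big(\phi(\{f,g\})\big)=\{\partial_\xi\phi(f),\phi(g)\}\pm\{\phi(f),\partial_\xi\phi(g)\}$ is established (it follows from $\bb{R}$-bilinearity and continuity of the bracket on $\widehat{\mathcal{O}}_{2n\vert a,b}$, cf.\ Lemma~\ref{lem-PoissonSuperalgebra}), composing with $\phi_p^{-1}$ shows $\tilde\nu(\xi)\in\mrm{Der}_{\omega_Q}(\widehat{\mathcal{O}}_{2n\vert a,b})=\mfrk{g}_{2n\vert a,b}$, and we set $\nu^{\mathrm{coor}}:=\tilde\nu$.
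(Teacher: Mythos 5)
Your construction is correct and is essentially the paper's: both produce the tautological connection by using the coordinate $\varphi$ at a point $(x,\varphi)\in\bb{M}^\mrm{coor}$ to identify $T_{(x,\varphi)}\bb{M}^\mrm{coor}$ with $\mfrk{g}_{2n\vert a,b}$ (the paper phrases this as splitting the exact sequence $0\rta\mrm{ker}(d\pi)\rta T\bb{M}^\mrm{coor}\rta T\bb{M}\rta 0$ and invoking the symplectic analogue of the decomposition $\mrm{Lie}(\mrm{Aut}_{2n\vert a,b})\oplus\bb{R}^{2n\vert r}\simeq\mfrk{g}_{2n\vert a,b}$, which is the same as your logarithmic differentiation of $\varphi$). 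The one genuine difference is your step (3): the paper gets symplecticity for free because $\bb{M}^\mrm{coor}$ is by definition the bundle of \emph{Poisson} isomorphisms, so $\mrm{ker}(d\pi)$ is already trivialized by $\mrm{Lie}(\mrm{Aut}_{2n\vert a,b})$ (Poisson derivations vanishing at the origin) and the translation summand is symplectic since constant vector fields are Hamiltonian for linear functions, whereas you re-derive this by differentiating the identity $\phi(\{f,g\})=\{\phi(f),\phi(g)\}$ --- both are fine, and your Maurer--Cartan verification is the content of the subsequent corollary rather than of this lemma.
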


\begin{proof}
The principal $\mrm{Aut}_{2n\vert a,b}$-bundle 
$\pi\colon\bb{M}^\mrm{coor}\rta\bb{M}$ 
determines a short exact sequence of pro-vector bundles on the pro-supermanifold $\bb{M}^\mrm{coor}$, 

\[0\rta\mrm{ker}(d\pi)\rta T\bb{M}^\mrm{coor}\rta T\bb{M}\rta 0.\]

The kernel $\mrm{ker}(d\pi)$ is isomorphic to the trivial $\mrm{Lie}(\mrm{Aut}_{2n\vert a,b})$-bundle on $\bb{M}^\mrm{coor}$. 
Hence, at each point $(x,\varphi)\in\bb{M}^\mrm{coor}$, 
we get a short exact sequence

\[0\rta \mrm{Lie}(\mrm{Aut}_{2n\vert a,b})\rta T_{(x,\varphi)}\bb{M}^\mrm{coor}\rta T_x\bb{M}\rta 0.\]

The isomorphism $\varphi$ determines an equivalence $T_x\bb{M}\cong\bb{R}^{2n\vert r}$. 
This gives us an equivalence 

\[\mrm{Lie}(\mrm{Aut}_{2n\vert a,b})\oplus \bb{R}^{2n\vert r}\simeq T_{(x,\varphi)}\bb{M}^\mrm{coor}.\]

Using the symplectic analogue of (\ref{eq-VanishAtZero}), 
we get an equivalence 

\[\nu^\mrm{coor}_{x,\varphi}\colon T_{(x,\varphi)}\bb{M}^\mrm{coor}\xrta{\sim} \mfrk{g}_{2n\vert a,b}.\]

Sending a point $(x,\varphi)\in\bb{M}^\mrm{coor}$ to the map $\nu^\mrm{coor}_{x,\varphi}$ defines a one-form 
\[\nu^\mrm{coor}\in\Omega^1(\bb{M}^\mrm{coor},\mfrk{g}_{2n\vert a,b}).\qedhere\]
\end{proof}

\begin{cor}
The connection $\nu^\mrm{coor}$ is flat. 
\end{cor}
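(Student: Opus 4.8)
The plan is to show that $\nu^{\mrm{coor}}$ satisfies the Maurer-Cartan equation $d_{\mrm{dR}}\nu^{\mrm{coor}} + \tfrac12[\nu^{\mrm{coor}},\nu^{\mrm{coor}}] = 0$. The strategy is the same as in the non-super symplectic case (cf. \cite[Lem. 3.2]{BK}) and the cotangent-bundle case (cf. \cite[Def. 2.2]{GGW}): the form $\nu^{\mrm{coor}}$ is not an arbitrary connection but the \emph{tautological} one, built from the identifications $\varphi \colon \widehat{\mathcal{O}}_{\bb{M},x} \cong \widehat{\mathcal{O}}_{2n\vert a,b}$ that the points of $\bb{M}^{\mrm{coor}}$ themselves encode, and such tautological forms are automatically flat. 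Concretely, I would first unwind what $\nu^{\mrm{coor}}$ does on a tangent vector: given $(x,\varphi)\in\bb{M}^{\mrm{coor}}$ and a tangent vector $v\in T_{(x,\varphi)}\bb{M}^{\mrm{coor}}$, the value $\nu^{\mrm{coor}}_{x,\varphi}(v)\in\mfrk{g}_{2n\vert a,b}$ is the symplectic formal vector field one obtains by differentiating the family of isomorphisms $\varphi$ along $v$ and transporting through $\varphi$ itself — i.e. it records the "infinitesimal change of formal symplectic coordinates."

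First I would reduce to a local statement. Over a Darboux chart $U\cong\bb{R}^{2n\vert r}$ (Theorem \ref{thm-SuperDarboux}), the bundle $\bb{M}^{\mrm{coor}}\vert_U$ is trivialized and, after picking a section, $\nu^{\mrm{coor}}$ has the standard form of a tautological $1$-form on a (pro-)jet-type bundle. Second, I would check the Maurer-Cartan identity at the level of the functor $\mathcal{F}_{\bb{M}^{\mrm{coor}}}$ of points: a point of $\bb{M}^{\mrm{coor}}$ over a test supermanifold $T$ is a pair $(\eta,\phi)$ with $\phi\colon\widehat{\mathcal{O}}_{\bb{M},\eta}\cong\mathcal{O}_T\hat\otimes\widehat{\mathcal{O}}_{2n\vert a,b}$, and the flatness of $\nu^{\mrm{coor}}$ is equivalent to the statement that this identification is compatible with the evident flat "de Rham" differential in the $T$-direction — which it is, essentially by naturality of the construction $\widehat{\mathcal{O}}_{\bb{M},\eta}$ in $\eta$, exactly as in Lemma \ref{lem-CoorFunctorial}. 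The cleanest formulation: the assignment $(x,\varphi)\mapsto(\text{germ of }\varphi)$ defines a \emph{flat} section of the associated bundle of formal symplectic coordinate systems, and $\nu^{\mrm{coor}}$ is precisely the connection for which this section is horizontal; a connection admitting a horizontal section through every point is flat. Third, I would verify that the requisite bracket computations take place inside $\mfrk{g}_{2n\vert a,b} = \widehat{\mrm{Vect}}^{\mrm{symp}}_0(\bb{R}^{2n\vert r},\omega_Q)$, so that the Maurer-Cartan equation is the honest equation in that Lie superalgebra (not just in $\widehat{\mrm{Vect}}_0$), using that the symplectic formal vector fields form a Lie sub-superalgebra.

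The main obstacle is bookkeeping the signs and the pro-structure simultaneously: one must be careful that the non-canonical splitting (\ref{eq-VanishAtZero}) used to define $\nu^{\mrm{coor}}$ pointwise assembles into a genuine $\mfrk{g}_{2n\vert a,b}$-valued $1$-form and that the Koszul signs in the super de Rham differential and in the Lie superbracket $[\nu^{\mrm{coor}},\nu^{\mrm{coor}}]$ conspire correctly — the even-ness of $\omega$ is what makes this work, so the computation is formally identical to the non-super case of \cite[Lem. 3.2]{BK} once the super de Rham complex (Definition \ref{def-VFsympsuper}) is in place. I would therefore structure the argument so that the genuinely new super content is isolated to checking that $\widehat{\mrm{Vect}}^{\mrm{symp}}_0$ and $\mrm{Aut}_{2n\vert a,b}$ form a pro-sHC pair (already done above) and that the tautological section is flat, and then cite \cite[Lem. 3.2]{BK}, \cite[Def. 2.2]{GGW} for the remaining identity.

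\begin{proof}
By construction, $\nu^{\mrm{coor}}$ is the tautological connection $1$-form on $\bb{M}^{\mrm{coor}}$: at $(x,\varphi)$ it sends a tangent vector to the induced infinitesimal change of formal symplectic coordinates, viewed inside $\mfrk{g}_{2n\vert a,b}=\widehat{\mrm{Vect}}^{\mrm{symp}}_0(\bb{R}^{2n\vert r},\omega_Q)$ via $\varphi$. Working in a Darboux chart (Theorem \ref{thm-SuperDarboux}), the bundle $\bb{M}^{\mrm{coor}}$ is trivialized and the point-functor description (the functor $\mathcal{F}_{\bb{M}^{\mrm{coor}}}$ above) identifies $\nu^{\mrm{coor}}$ with the standard tautological form whose horizontal sections are exactly the families of formal symplectic coordinate systems; naturality of $\widehat{\mathcal{O}}_{\bb{M},\eta}$ in $\eta$ (as in Lemma \ref{lem-CoorFunctorial}) shows such a horizontal section passes through every point, so $\nu^{\mrm{coor}}$ is flat. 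Equivalently, unwinding the definitions reduces the Maurer-Cartan equation $d_{\mrm{dR}}\nu^{\mrm{coor}}+\tfrac12[\nu^{\mrm{coor}},\nu^{\mrm{coor}}]=0$ to the corresponding identity in the Lie superalgebra $\mfrk{g}_{2n\vert a,b}$, where it holds by the same computation as in the non-super symplectic case \cite[Lem. 3.2]{BK} and the cotangent case \cite[Def. 2.2]{GGW}; the relevant brackets remain inside $\widehat{\mrm{Vect}}^{\mrm{symp}}_0$ since symplectic formal vector fields form a Lie sub-superalgebra, and the Koszul signs are controlled by the even-ness of $\omega$.
\end{proof}
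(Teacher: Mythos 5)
Your overall instinct---that $\nu^\mrm{coor}$ is a tautological form and is therefore automatically flat---is in the right spirit, and your fallback citation of \cite[Lem.~3.2]{BK} points at the correct result. But the mechanism you actually propose does not apply here. The form $\nu^\mrm{coor}_{x,\varphi}\colon T_{(x,\varphi)}\bb{M}^\mrm{coor}\rta\mfrk{g}_{2n\vert a,b}$ is a pointwise \emph{isomorphism} (a coframing, i.e.\ a Cartan-type connection), not an ordinary principal connection valued in the Lie algebra of the structure group. Consequently the ``horizontal distribution'' $\ker\nu^\mrm{coor}$ is zero: no section of $\bb{M}^\mrm{coor}\rta\bb{M}$ can be horizontal, so the principle ``a connection admitting a horizontal section through every point is flat'' has nothing to bite on. In particular, the translation sections over a Darboux chart are \emph{not} killed by $\nu^\mrm{coor}$; their image is the summand $\bb{R}^{2n\vert r}$ of constant vector fields in the splitting (\ref{eq-VanishAtZero}). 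Here ``flat'' simply means that $\nu^\mrm{coor}$ satisfies the Maurer--Cartan equation, and that has to be checked directly rather than deduced from integrability of a horizontal distribution.

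The argument the paper uses is one line and worth internalizing: by construction $\nu^\mrm{coor}$ is the inverse of a Lie superalgebra homomorphism $\rho\colon\mfrk{g}_{2n\vert a,b}\rta\mrm{Vect}(\bb{M}^\mrm{coor})$, namely the infinitesimal action by change of formal symplectic coordinates. For any such form, evaluating on the frame $\rho(a),\rho(b)$ with $a,b\in\mfrk{g}_{2n\vert a,b}$ gives
\[
d_\mrm{dR}\nu^\mrm{coor}\bigl(\rho(a),\rho(b)\bigr)=\rho(a)\cdot b-(-1)^{|a||b|}\rho(b)\cdot a-\nu^\mrm{coor}\bigl([\rho(a),\rho(b)]\bigr)=-[a,b],
\]
where the first two terms vanish because $\nu^\mrm{coor}(\rho(b))=b$ is a constant function on $\bb{M}^\mrm{coor}$ and the last term is computed using that $\rho$ respects brackets; this is precisely the Maurer--Cartan equation. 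The one genuinely super-specific point you raise---that all the relevant brackets stay inside $\mfrk{g}_{2n\vert a,b}=\widehat{\mrm{Vect}}^\mrm{symp}_0$ because symplectic formal vector fields form a Lie sub-superalgebra---is correct and is exactly what makes the computation land in the right place; the Darboux-chart trivialization and the appeal to horizontal sections should be dropped.
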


\begin{proof}
By construction, $\nu^\mrm{coor}$ is the inverse of a Lie superalgebra map 
\[\mfrk{g}_{2n\vert a,b}\rta\mrm{Vect}(\bb{M}^\mrm{coor}),\]
and hence satisfies the Maurer-Cartan equation.
\end{proof}

This completes Step 1 of our proof of Theorem \ref{thm-CategoryofMfldsGoal}.

\begin{cor}\label{cor-CoorStructure} 
The formal symplectic coordinate bundle $\bb{M}^\mrm{coor}\rta\bb{M}$ is a 
$(\mfrk{g}_{2n\vert a,b},\mrm{Aut}_{2n\vert a,b})$-principal bundle.
\end{cor}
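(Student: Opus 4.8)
The plan is to assemble this corollary from facts already established in this subsection: the preceding lemmas give that $\bb{M}^\mrm{coor}\rta\bb{M}$ is a principal $\mrm{Aut}_{2n\vert a,b}$-bundle, that $\nu^\mrm{coor}$ is a $\mfrk{g}_{2n\vert a,b}$-valued one-form on $\bb{M}^\mrm{coor}$, and that $\nu^\mrm{coor}$ is flat, i.e.\ satisfies $d_\mrm{dR}\nu^\mrm{coor}+\frac{1}{2}[\nu^\mrm{coor},\nu^\mrm{coor}]=0$. So the remaining task is to verify the two pointwise axioms in the definition of a $(\mfrk{g}_{2n\vert a,b},\mrm{Aut}_{2n\vert a,b})$-principal bundle: that $\nu^\mrm{coor}$ is $\mrm{Aut}_{2n\vert a,b}$-invariant (for the sHC action $\rho$ of $\mrm{Aut}_{2n\vert a,b}$ on $\mfrk{g}_{2n\vert a,b}$), and that $\nu^\mrm{coor}(\zeta_a)=a$ for every $a\in\mrm{Lie}(\mrm{Aut}_{2n\vert a,b})$, where $\zeta_a$ denotes the fundamental vector field of $a$ on $\bb{M}^\mrm{coor}$.

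I would check both using the canonical description of $\nu^\mrm{coor}$ recorded in the flatness proof, namely that $\nu^\mrm{coor}$ is the inverse of the tautological Lie superalgebra map $\alpha\colon\mfrk{g}_{2n\vert a,b}\rta\mrm{Vect}(\bb{M}^\mrm{coor})$, rather than the non-canonical splitting $\mrm{Lie}(\mrm{Aut}_{2n\vert a,b})\oplus\bb{R}^{2n\vert r}\simeq\mfrk{g}_{2n\vert a,b}$ coming from the symplectic analogue of \eqref{eq-VanishAtZero}. For the normalization axiom: $\zeta_a$ is vertical, hence valued in $\ker(d\pi)$, which the flatness proof identifies with the trivial $\mrm{Lie}(\mrm{Aut}_{2n\vert a,b})$-bundle; unwinding that identification shows $\zeta_a=\alpha(i(a))$, where $i\colon\mrm{Lie}(\mrm{Aut}_{2n\vert a,b})\hookrightarrow\mfrk{g}_{2n\vert a,b}$ is the structural embedding of the pro-sHC pair, so $\nu^\mrm{coor}(\zeta_a)=i(a)=a$. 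For the invariance axiom: the $\mrm{Aut}_{2n\vert a,b}$-action on $\bb{M}^\mrm{coor}$ is by post-composition, and by the naturality of the construction of $\bb{M}^\mrm{coor}$ (as in Lemma \ref{lem-CoorFunctorial}) the map $\alpha$ intertwines $\rho$ with the pushforward action of $\mrm{Aut}_{2n\vert a,b}$ on $\mrm{Vect}(\bb{M}^\mrm{coor})$; passing to inverses yields the required equivariance of $\nu^\mrm{coor}$. Together with Maurer--Cartan, this is precisely the definition of a $(\mfrk{g}_{2n\vert a,b},\mrm{Aut}_{2n\vert a,b})$-principal bundle.

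The main obstacle is not conceptual but the bookkeeping forced by the pro-structure. Since $\mrm{Aut}_{2n\vert a,b}$ is a pro-supergroup and $\bb{M}^\mrm{coor}$ a pro-supermanifold, I would run the above argument at each truncation $\widehat{\mathcal{O}}_{\bb{M},x}/\mfrk{m}_x^i$ and then check compatibility with the structure maps of the inverse system: that the action maps $\alpha$, the trivializations of $\ker(d\pi)$, and the forms $\nu^\mrm{coor}$ at consecutive levels fit together, so that the two axioms, verified levelwise, pass to the limit. Granting the pro-versions of Example \ref{ex-HCpairSubgroup} and Lemma \ref{lem-HCsubLie} that were used to build the pair $(\mfrk{g}_{2n\vert a,b},\mrm{Aut}_{2n\vert a,b})$ in the first place, this compatibility is routine, and the corollary follows.
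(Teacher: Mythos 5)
Your proposal is correct and follows the same route as the paper, which states this corollary with no separate proof as an immediate consequence of the preceding lemma (existence of $\nu^\mrm{coor}$) and its corollary (flatness). Your explicit verification of the normalization axiom $\nu^\mrm{coor}(\zeta_a)=a$ and of $\mrm{Aut}_{2n\vert a,b}$-equivariance, together with the levelwise treatment of the pro-structure, fills in details the paper leaves implicit, and does so correctly.
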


One can compare this to \cite[Lem. 3.2]{BK} for the non-super version. 

\subsubsection{Formal Exponentials}

We define the notion of a ``formal exponential" which will allow us to move the structure defined in Corollary \ref{cor-CoorStructure} 
from the formal symplectic coordinate bundle, to the symplectic frame bundle. 

As in \cite[Def. 2.4]{GGW}, we define a formal exponential as follows. 

\begin{defn}\label{def-FormalExponential}
Let $(\bb{M},\omega)$ be a symplectic supermanifold of type $(2n\vert a,b)$. 
A \emph{formal exponential} on $\bb{M}$ is a section of 
the $\mrm{Aut}_{2n\vert a,b}/\mrm{Sp}(2n\vert a,b)$-bundle 

\[\mrm{Exp}(\bb{M}):=\bb{M}^\mrm{coor}/\mrm{Sp}(2n\vert a,b)\rta\bb{M}.\]
\end{defn}

\begin{lem}\label{lem-FiberContractible}
The space $\mrm{Aut}_{2n\vert a,b}/\mrm{Sp}(2n\vert a,b)$ is contractible, 
and thus formal exponentials always exist.
\end{lem}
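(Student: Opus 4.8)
The plan is to show contractibility of the homogeneous pro-space $\mrm{Aut}_{2n\vert a,b}/\mrm{Sp}(2n\vert a,b)$ by identifying it, at each finite stage of the pro-system, with an affine space, and then argue that a section of a bundle with affine (hence contractible) fibers always exists because the obstructions to extending a section stage-by-stage vanish. First I would recall the filtration on $\AQ = \widehat{\mathcal{O}}_{2n\vert a,b}$ by powers of the maximal ideal $\mfrk{m}$, and the corresponding filtration on $\mrm{Aut}_{2n\vert a,b} = \mrm{Aut}_\mrm{Pois}^\mrm{filt}(\AQ)$: let $\mrm{Aut}^{(k)}$ be the normal sub-pro-supergroup of automorphisms that act as the identity on $\mfrk{m}/\mfrk{m}^{k+1}$. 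Then $\mrm{Aut}_{2n\vert a,b} = \varprojlim_k \mrm{Aut}_{2n\vert a,b}/\mrm{Aut}^{(k)}$, the subgroup $\mrm{Sp}(2n\vert a,b)$ maps isomorphically onto the degree-one quotient (linear symplectomorphisms are exactly the filtration-preserving Poisson automorphisms modulo $\mrm{Aut}^{(1)}$), and the successive quotients $\mrm{Aut}^{(k)}/\mrm{Aut}^{(k+1)}$ are finite-dimensional vector groups — concretely, they are identified with the space of symplectic (Hamiltonian) vector fields vanishing to order $k+1$ at the origin, i.e. a graded piece of $\mfrk{g}_{2n\vert a,b}$, which sits inside polynomials of fixed degree and is thus a finite-dimensional $\bb{R}$-vector space.

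Next I would assemble these facts: the quotient $\mrm{Aut}_{2n\vert a,b}/\mrm{Sp}(2n\vert a,b)$ is the inverse limit of the tower of quotients $(\mrm{Aut}_{2n\vert a,b}/\mrm{Aut}^{(k)})/\mrm{Sp}(2n\vert a,b)$, and each map in the tower is a fibration (a torsor under the vector group $\mrm{Aut}^{(k)}/\mrm{Aut}^{(k+1)}$) whose fiber is a finite-dimensional affine space. The base of the tower, $(\mrm{Aut}_{2n\vert a,b}/\mrm{Aut}^{(1)})/\mrm{Sp}(2n\vert a,b)$, is a point since $\mrm{Sp}(2n\vert a,b)$ is precisely the degree-one quotient. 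An inverse limit of a tower of affine-space bundles over a point, with each stage a torsor under a (contractible) vector group, is itself contractible: each successive total space is affine over the previous one, so by induction each finite stage is contractible, and the limit — a pro-space which is, up to the obvious identification, an inverse limit of finite-dimensional affine spaces with affine transition maps — is contractible. (One can phrase this homotopy-theoretically as: contractibility is detected on each finite stage, and a tower of trivial fibrations has contractible limit.)

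Finally, the existence of formal exponentials: a section of the bundle $\mrm{Exp}(\bb{M}) = \bb{M}^\mrm{coor}/\mrm{Sp}(2n\vert a,b) \rta \bb{M}$ is constructed by lifting through the tower. Over $\bb{M}$, the bundle $\mrm{Exp}(\bb{M})$ is the inverse limit of bundles $\mrm{Exp}^{(k)}(\bb{M}) = \bb{M}^\mrm{coor}/(\mrm{Sp}(2n\vert a,b)\cdot\mrm{Aut}^{(k)}) \rta \bb{M}$, where $\mrm{Exp}^{(1)}(\bb{M}) = \bb{M}$ (so the initial section is the identity) and $\mrm{Exp}^{(k+1)}(\bb{M}) \rta \mrm{Exp}^{(k)}(\bb{M})$ is a torsor under the vector bundle associated to the finite-dimensional $\mrm{Sp}(2n\vert a,b)$-representation $\mrm{Aut}^{(k)}/\mrm{Aut}^{(k+1)}$. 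Such affine bundles over a (super)manifold admit global sections — one can use a partition of unity on the underlying manifold $M$ to average local sections, the odd directions causing no trouble since the relevant sheaf of sections is a module over $\mathcal{O}_\bb{M}$ and hence soft — and a section at each stage lifts to the next. The inverse limit of these sections is the desired formal exponential. I expect the main obstacle to be bookkeeping rather than conceptual: one must verify carefully that the graded pieces $\mrm{Aut}^{(k)}/\mrm{Aut}^{(k+1)}$ are genuinely finite-dimensional vector groups in the super setting — this uses that the Poisson condition cuts the automorphisms down from all filtered algebra automorphisms to something controlled by Hamiltonians, exactly as in the non-super case of \cite[\S 3.1]{BK} and \cite[\S 2.3]{GGW}, and that the odd variables (where the quadratic form $Q$ enters) do not spoil finite-dimensionality — together with checking that the softness/partition-of-unity argument for sections behaves well under the pro-limit.
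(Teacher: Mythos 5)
Your argument is correct and is essentially the paper's proof with the details unpacked: the paper simply observes that the kernel $\mrm{Aut}^+_{2n\vert a,b}$ of the projection $\mrm{Aut}_{2n\vert a,b}\rta\mrm{Sp}(2n\vert a,b)$ is pro-nilpotent, hence contractible, which is exactly your tower of vector-group torsors with affine fibers. Your more explicit stage-by-stage lifting (and the partition-of-unity argument for sections) is a fine elaboration of the paper's terser ``the result follows,'' but it is the same underlying decomposition.
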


\begin{proof}
Note that $\widehat{\mathcal{O}}_{2n\vert a,b}/\mfrk{m}^2$ consists of linear functions on $\bb{R}^{2n\vert a+b}$. 
The image of $\mrm{Aut}_{2n\vert a,b}$ in 
$\mrm{Aut}_\mrm{Pois}(\widehat{\mathcal{O}}_{2n\vert a,b}/\mfrk{m}^2)$ 
is thus $\mrm{Sp}(2n\vert a,b)$. 
Let $\mrm{Aut}^+_{2n\vert a,b}$ be the kernel of the projection 

\[1\rta\mrm{Aut}^+_{2n\vert a,b}\rta\mrm{Aut}_{2n\vert a,b}\rta\mrm{Sp}(2n\vert a,b)\rta 1.\]
Since $\mrm{Aut}^+_{2n\vert a,b}$ is pro-nilpotent, it is contractible. 
In fact, $\mrm{Aut}^+_{2n\vert a,b}$ is a pro-vector space.
The result follows.
\end{proof} 

We will use the following to define a morphism between formal exponentials. 
Let $f\colon \bb{M}_1\rta\bb{M}_2$ be a morphism in $\msf{sMfld}_{2n\vert a,b}$. 
The map $f^\mrm{coor}$ from Lemma \ref{lem-CoorFunctorial} respects the action of $\mrm{Aut}_{2n\vert a,b}$ on both sides by post-composition, 
and therefore descends to a map 
$\mrm{Exp}(\bb{M}_1)\rta\mrm{Exp}(\bb{M}_2)$ 
of pro-supermanifolds, and a commuting diagram 

\[\begin{xymatrix}
{
\mrm{Exp}(\bb{M}_1)\arw[d]\arw[r]^{f^\mrm{coor}}& \mrm{Exp}(\bb{M}_2)\arw[d]\\
\bb{M}_1\arw[r]_f & \bb{M}_2.
}
\end{xymatrix}\]

We obtain the following diagram involving the pullback bundle $f^*\mrm{Exp}(\bb{M}_2)$, 

\[\begin{xymatrix}
{
\mrm{Exp}(\bb{M}_1)\arw[dr]^g\ar@/^15pt/[rrd]\ar@/_15pt/[ddr] & & \\
& f^*\mrm{Exp}(\bb{M}_2)\arw[r]^{f^\mrm{coor}}\arw[d] & \mrm{Exp}(\bb{M}_2)\arw[d]\\
& \bb{M}_1\arw[r]_f & \bb{M}_2.
}
\end{xymatrix}\]

Given a formal exponential $\sigma_1$ on $\bb{M}_1$, 
we obtain a section of $f^\mrm{Exp}(\bb{M}_2)$ by $g\circ\sigma_1$. 
On the other hand, given a formal exponential $\sigma_2$ on $\bb{M}_2$, 
we get a section of $f^*\mrm{Exp}(\bb{M}_2)$ from the diagram 

\[\begin{xymatrix}
{
\bb{M}_1\ar@{-->}[dr]\ar@/^15pt/[rrd]^{\sigma_2}\ar@/_15pt/[ddr]_{\mrm{Id}} & & \\
& f^*\mrm{Exp}(\bb{M}_2)\arw[r]^{f^\mrm{coor}}\arw[d] & \mrm{Exp}(\bb{M}_2)\arw[d]\\
& \bb{M}_1\arw[r]_f & \bb{M}_2.
}
\end{xymatrix}\]

\begin{defn}\label{def-sGK}
Let $\msf{sGK}_{2n\vert a,b}$ denote the category with

\begin{itemize}

\item objects are pairs $((\bb{M},\omega),\sigma)$ where 
$(\bb{M},\omega)$ is a symplectic supermanifold of type $(2n\vert a,b)$ and $\sigma$ is a formal exponential on $\bb{M}$, and 

\item a morphism $(\bb{M}_1,\sigma_1)\rta(\bb{M}_2,\sigma_2)$ is 
a morphism $f\colon\bb{M}_1\rta\bb{M}_2$ in $\msf{sMfld}_{2n\vert a,b}$, 
and a homotopy class of paths in the space 
$\Gamma(\bb{M}_1,f^*\mrm{Exp}(\bb{M}_2))$ 
between the sections defined by $\sigma_1$ and $\sigma_2$.
\end{itemize}
\end{defn}

One should compare the following with the discussion around \cite[Def. 2.11]{GGW}. 
\begin{lem}
The forgetful functor 
$\msf{sGK}_{2n\vert a,b}\rta\msf{sMfld}_{2n\vert a,b}$ 
is an equivalence of categories. 
\end{lem}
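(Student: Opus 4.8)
The plan is to exhibit explicit functors in both directions and check they are mutually inverse. The forgetful functor $U\colon\msf{sGK}_{2n\vert a,b}\rta\msf{sMfld}_{2n\vert a,b}$ sends $((\bb{M},\omega),\sigma)$ to $(\bb{M},\omega)$ and a morphism $(f,[\gamma])$ to $f$. To build a quasi-inverse, I would first invoke Lemma \ref{lem-FiberContractible}: the fiber $\mrm{Aut}_{2n\vert a,b}/\mrm{Sp}(2n\vert a,b)$ of $\mrm{Exp}(\bb{M})\rta\bb{M}$ is contractible, so the space of sections $\Gamma(\bb{M},\mrm{Exp}(\bb{M}))$ is nonempty and, being a space of sections of a bundle with contractible fibers over a (paracompact) base, is itself contractible. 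Choosing (e.g.\ via a partition of unity, using that $\mrm{Aut}^+_{2n\vert a,b}$ is pro-nilpotent hence an iterated affine bundle) a section $\sigma_\bb{M}$ for each object gives an object-level splitting $S(\bb{M},\omega) = ((\bb{M},\omega),\sigma_\bb{M})$.

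On morphisms, given $f\colon\bb{M}_1\rta\bb{M}_2$ I must produce a homotopy class of paths in $\Gamma(\bb{M}_1,f^*\mrm{Exp}(\bb{M}_2))$ between the section induced by $\sigma_{\bb{M}_1}$ (via the map $g$ of the first diagram) and the section induced by $\sigma_{\bb{M}_2}$ (via the second diagram). Since $f^*\mrm{Exp}(\bb{M}_2)\rta\bb{M}_1$ again has contractible fibers, its section space $\Gamma(\bb{M}_1,f^*\mrm{Exp}(\bb{M}_2))$ is contractible, hence path-connected with a unique homotopy class of paths between any two points; so there is a canonical such class, and I set $S(f) = (f, [\gamma_f])$ with $[\gamma_f]$ that unique class. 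Functoriality of $S$ — that $S(f'\circ f) = S(f')\circ S(f)$ — reduces to the uniqueness of homotopy classes of paths in these contractible section spaces, together with compatibility of the pullback diagrams under composition, which is a diagram chase in the constructions of Lemma \ref{lem-CoorFunctorial}.

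It remains to check $U\circ S = \mrm{Id}$ (immediate from the definitions) and that $S\circ U$ is naturally isomorphic to $\mrm{Id}_{\msf{sGK}_{2n\vert a,b}}$. Given an object $((\bb{M},\omega),\sigma)$, both $\sigma$ and the chosen $\sigma_\bb{M}$ are points of the contractible space $\Gamma(\bb{M},\mrm{Exp}(\bb{M}))$, so there is a canonical homotopy class of paths between them; applying this with $f = \mrm{Id}_\bb{M}$ gives a morphism $((\bb{M},\omega),\sigma)\rta((\bb{M},\omega),\sigma_\bb{M})$ in $\msf{sGK}_{2n\vert a,b}$, and these assemble into a natural isomorphism $\mrm{Id}\Rightarrow S\circ U$ — naturality again being the uniqueness of path classes in the relevant contractible section spaces.

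**The main obstacle** is purely foundational: justifying that the section space $\Gamma(\bb{X},\mathcal{E})$ of a pro-supermanifold bundle with pro-nilpotent (iterated affine) fibers is contractible, so that "homotopy class of paths" behaves as expected. This requires care because $\bb{M}^\mrm{coor}$ and $\mrm{Exp}(\bb{M})$ are pro-objects, so one should argue level-by-level through the tower defining $\mrm{Aut}^+_{2n\vert a,b}$, at each stage a section space of an affine bundle, and then pass to the limit; the rest of the argument is a formal manipulation with contractible spaces once this is in hand.
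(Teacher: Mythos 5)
Your proof is correct and rests on exactly the same key input as the paper's: Lemma \ref{lem-FiberContractible} together with the fact that the section space $\Gamma(\bb{M}_1,f^*\mrm{Exp}(\bb{M}_2))$ of a bundle with contractible fiber is contractible, so that the extra datum of a homotopy class of paths is unique and formal exponentials always exist. The only difference is packaging --- you build an explicit quasi-inverse by choosing a formal exponential for each object, whereas the paper verifies fully faithful plus essentially surjective directly --- and your closing remark about handling the pro-object structure level-by-level is a reasonable (and slightly more careful) gloss on the paper's one-line assertion of that contractibility.
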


\begin{proof}
By our definition of morphism spaces in $\msf{sGK}_{2n\vert a,b}$, 
it suffices to show that for $f\colon\bb{M}_1\rta\bb{M}_2$ a morphism in $\msf{sMfld}_{2n\vert a,b}$, 
the space $\Gamma(\bb{M}_1,f^*\mrm{Exp}(\bb{M}_2))$ is contractible. 
The bundle $\mrm{Exp}(\bb{M}_2)$, and hence $f^*\mrm{Exp}(\bb{M}_2)$, 
have fiber $\mrm{Aut}_{2n\vert a,b}/\mrm{Sp}(2n\vert a,b)$. 
By Lemma \ref{lem-FiberContractible}, 
this fiber is contractible. 
The space of sections of a bundle with contractible fiber is contractible. 
This completes the proof.
\end{proof}

We introduce a stricter variation on $\msf{sGK}_{2n\vert a,b}$.

\begin{variant}\label{var-sgk=}
Let $\msf{sGK}_{2n\vert a,b}^=$ be the category with 
\begin{itemize}

\item objects: pairs $((\bb{M},\omega),\sigma)$ where 
$(\bb{M},\omega)$ is a symplectic supermanifold of type $(2n\vert a,b)$ and $\sigma$ is a formal exponential on $\bb{M}$, and 

\item morphisms: a map $(\bb{M}_1,\sigma_1)\rta(\bb{M}_2,\sigma_2)$ is 
a morphism $f\colon\bb{M}_1\rta\bb{M}_2$ in $\msf{sMfld}_{2n\vert a,b}$ 
such that the diagram 

\[\begin{xymatrix}
{
\mrm{Exp}(\bb{M}_1)\arw[r]^{f^\mrm{coor}} & \mrm{Exp}(\bb{M}_2)\\
\bb{M}_1\arw[u]^{\sigma_1}\arw[r]_f & \bb{M}_2\arw[u]_{\sigma_2}
}
\end{xymatrix}\]

commutes.
\end{itemize}

\end{variant}

\noindent Note that the condition on morphisms is equivalent to asking that 
$f\colon\bb{M}_1\rta\bb{M}_2$ be such that 
the sections in $\Gamma(\bb{M}_1,f^*\mrm{Exp}(\bb{M}_2))$ 
defined by $\sigma_1$ and $\sigma_2$ are equal, 
as opposed to having a path between them.

\begin{rmk}
There is an evident functor 
$\msf{sGK}^=_{2n\vert a,b}\rta\msf{sGK}_{2n\vert a,b}$ 
that is the identity on objects and the inclusion of constant paths on morphisms, 
but this functor is not fully faithful.
\end{rmk}

The main use of a formal exponential is to put the structure of a $\gK$-bundle structure on the symplectic frame bundle. 
The following is analogous to \cite[Prop. 2.6]{GGW}.

\begin{prop}\label{prop-gkfromExp}
Let $\sigma\in\Gamma(\bb{M},\mrm{Exp}(\bb{M}))$ be a formal exponential. 
Then 
\begin{itemize}
\item $\sigma$ lifts to a $\mrm{Sp}(2n\vert a,b)$-equivariant map $\tilde{\sigma}\colon\Fr_\bb{M}\rta\bb{M}^\mrm{coor}$,
\[\xymatrix{
\Fr_\bb{M}\arw[r]^{\tilde{\sigma}}\arw[d] & \bb{M}^\mrm{coor}\arw[d]\\
\bb{M}\arw[r]^-\sigma & \mrm{Exp}(\bb{M}),
}\]
\item and $\tilde{\sigma}^*(\nu^\mrm{coor})$ is a flat $\mfrk{g}_{2n\vert a,b}$-valued connection on $\Fr_\bb{M}$. 
With this connection, $\Fr_\bb{M}$ is a $\gK$-bundle.
\item Any two choices of a formal exponential determine 
$\gK$-bundle structures on $\Fr_\bb{M}$ that are gauge-equivalent. 
\end{itemize}
\end{prop}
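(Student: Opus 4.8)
The plan is to establish the three bullet points in order, exploiting the fact that $\mrm{Exp}(\bb{M}) = \bb{M}^\mrm{coor}/\mrm{Sp}(2n\vert a,b)$ and that the symplectic frame bundle is recovered as $\Fr_\bb{M} \simeq \bb{M}^\mrm{coor} \times_{\mrm{Aut}_{2n\vert a,b}} (\mrm{Aut}_{2n\vert a,b}/\mrm{Aut}^+_{2n\vert a,b})$, i.e.\ $\Fr_\bb{M} \simeq \bb{M}^\mrm{coor}/\mrm{Aut}^+_{2n\vert a,b}$ where $\mrm{Aut}^+_{2n\vert a,b}$ is the pro-nilpotent kernel from Lemma \ref{lem-FiberContractible}. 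Indeed, reducing a Poisson-coordinate germ modulo $\mfrk{m}^2$ sends an element of $\bb{M}^\mrm{coor}_x$ to a linear symplectomorphism $T_x\bb{M} \cong \bb{R}^{2n\vert r}$, which is exactly a point of $\Fr^{\mrm{Sp}(2n\vert a,b)}_\bb{M}\vert_x$; this is the geometric content underlying the first bullet.

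For the first bullet: given the section $\sigma$ of $\bb{M}^\mrm{coor}/\mrm{Sp}(2n\vert a,b) \rta \bb{M}$, I would pull back the principal $\mrm{Sp}(2n\vert a,b)$-bundle $\bb{M}^\mrm{coor} \rta \mrm{Exp}(\bb{M})$ along $\sigma$. The result $\sigma^*\bb{M}^\mrm{coor}$ is a principal $\mrm{Sp}(2n\vert a,b)$-bundle on $\bb{M}$, and the reduction-mod-$\mfrk{m}^2$ map identifies it $\mrm{Sp}(2n\vert a,b)$-equivariantly with $\Fr_\bb{M}$ (one should check this map is an isomorphism of principal bundles, which is straightforward since both sides have the same structure group and the map is equivariant and covers the identity on $\bb{M}$). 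Composing $\Fr_\bb{M} \xrta{\sim} \sigma^*\bb{M}^\mrm{coor} \rta \bb{M}^\mrm{coor}$ produces the desired $\mrm{Sp}(2n\vert a,b)$-equivariant lift $\tilde\sigma$, and the square commutes by construction of the pullback.

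For the second bullet: by Corollary \ref{cor-CoorStructure}, $\nu^\mrm{coor} \in \Omega^1(\bb{M}^\mrm{coor};\mfrk{g}_{2n\vert a,b})$ is a flat $(\mfrk{g}_{2n\vert a,b},\mrm{Aut}_{2n\vert a,b})$-connection; pulling back along the $\mrm{Sp}(2n\vert a,b)$-equivariant $\tilde\sigma$ gives $\tilde\sigma^*\nu^\mrm{coor} \in \Omega^1(\Fr_\bb{M};\mfrk{g}_{2n\vert a,b})$. The three defining conditions of a $\gK$-bundle follow by naturality: $\mrm{Sp}(2n\vert a,b)$-invariance because $\tilde\sigma$ is equivariant and $\nu^\mrm{coor}$ is $\mrm{Aut}_{2n\vert a,b}$-invariant; the normalization $\tilde\sigma^*\nu^\mrm{coor}(\zeta_a) = a$ for $a \in \mrm{Lie}(\mrm{Sp}(2n\vert a,b))$ because $d\tilde\sigma$ sends fundamental vector fields of the $\mrm{Sp}$-action to fundamental vector fields of the $\mrm{Aut}$-action (as $\mrm{Sp}(2n\vert a,b) \hookrightarrow \mrm{Aut}_{2n\vert a,b}$) and $\nu^\mrm{coor}$ satisfies the analogous normalization; and flatness (the Maurer--Cartan equation) because pullback commutes with $d_{\mrm{dR}}$ and with the bracket, so $d_{\mrm{dR}}\tilde\sigma^*\nu^\mrm{coor} + \tfrac12[\tilde\sigma^*\nu^\mrm{coor},\tilde\sigma^*\nu^\mrm{coor}] = \tilde\sigma^*\bigl(d_{\mrm{dR}}\nu^\mrm{coor} + \tfrac12[\nu^\mrm{coor},\nu^\mrm{coor}]\bigr) = 0$.

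For the third bullet --- which I expect to be the main obstacle --- suppose $\sigma_0,\sigma_1$ are two formal exponentials. Since the fiber $\mrm{Aut}_{2n\vert a,b}/\mrm{Sp}(2n\vert a,b)$ is contractible (Lemma \ref{lem-FiberContractible}), the space of sections of $\mrm{Exp}(\bb{M})$ is contractible, so there is a path $\sigma_t$ connecting them; differentiating this path produces a $1$-parameter family of gauge transformations. Concretely, I would produce an automorphism of the principal $\gK$-bundle $\Fr_\bb{M}$ (covering the identity on $\bb{M}$) intertwining the two flat connections $\tilde\sigma_0^*\nu^\mrm{coor}$ and $\tilde\sigma_1^*\nu^\mrm{coor}$, built from the pro-nilpotent group $\mrm{Aut}^+_{2n\vert a,b}$: the two lifts $\tilde\sigma_0,\tilde\sigma_1\colon \Fr_\bb{M} \rightrightarrows \bb{M}^\mrm{coor}$ differ fiberwise by a section of the associated $\mrm{Aut}^+_{2n\vert a,b}$-bundle, and this section is precisely the gauge transformation. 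One must check it is $\mrm{Sp}(2n\vert a,b)$-equivariant and that conjugating/acting by it carries one connection to the other; the pro-structure requires a mild convergence/inverse-limit argument, but pro-nilpotence makes everything well-defined level by level. This is analogous to \cite[Prop. 2.6]{GGW}, and I would cite that argument's structure while verifying the super- and symplectic-specific points (that the relevant kernels are pro-nilpotent, Lemma \ref{lem-FiberContractible}, and that all maps respect the Poisson structure).
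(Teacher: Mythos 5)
Your proposal is correct and follows essentially the same route as the paper's (much terser) proof: the lift comes from the identification $\mrm{Exp}(\bb{M})=\bb{M}^\mrm{coor}/\mrm{Sp}(2n\vert a,b)$ together with recognizing $\Fr_\bb{M}$ as the quotient of $\bb{M}^\mrm{coor}$ by the pro-nilpotent kernel, the connection is obtained by pulling back $\nu^\mrm{coor}$, and gauge-equivalence follows from the contractibility of the space of formal exponentials (Lemma \ref{lem-FiberContractible}). You simply supply more detail than the paper does at each of the three steps.
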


\begin{proof}
The first claim follows from the the definition of 
$\mrm{Exp}(\bb{M})$ as $\bb{M}^\mrm{coor}/\mrm{Sp}(2n\vert a,b)$ 
and the equivalence 
$\bb{M}\simeq \Fr_\bb{M}/\mrm{Sp}(2n\vert a,b)$. 
The second claim is just the statement that flat connections pullback, but in the context of pro-supermanifolds. 
The gauge-equivalence in the third claim can be produced using the contractibility of the space of formal exponentials.
\end{proof}

\begin{rmk}\label{rmk-BKours2}
In the language of \cite[\S 2.4]{BK}, 
Proposition \ref{prop-gkfromExp} shows that 
one can think of a formal exponential on $\bb{M}$ as determining a ``lift" 
(similar to a reduction of structure group) of the 
$(\mfrk{g}_{2n\vert a,b},\mrm{Aut}_{2n\vert a,b})$-bundle 
$(\bb{M}^\mrm{coor},\nu^\mrm{coor})$ from Corollary \ref{cor-CoorStructure} 
to the sHC pair $\gK$. 
Suppose $\bb{M}$ is purely even so that $M=\bb{M}$ is an ordinary symplectic manifold. 
In \cite[Lem. 3.4]{BK}, 
Bezrukavnikov-Kaledin describe an HC pair they call 
$(\mrm{Der}(D),\mrm{Aut}(D))$ 
(where $D$ is the Weyl algebra) and 
show that the set of lifts of $M^\mrm{coor}$ to a 
$(\mrm{Der}(D),\mrm{Aut}(D))$-bundle is in bijective correspondence with 
isomorphism classes of deformation quantizations $Q(M,\omega)$. 
See Remark \ref{rmk-BKours3} below and Remark \ref{rmk-BKours} above for further discussion in this direction. 
\end{rmk}

\begin{cor}\label{cor-FrFunctor}
There is a functor 
\[\mrm{Fr}\colon\msf{sGK}^=_{2n\vert a,b}\rta\msf{Bun}^\mrm{flat}_{\gK}\]
sending a pair $((\bb{M},\omega),\sigma)$ to the symplectic frame bundle $\Fr_\bb{M}$ with $\gK$-bundle structure induced from $\sigma$.
\end{cor}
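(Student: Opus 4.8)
The plan is to construct the functor $\mrm{Fr}$ by assembling the ingredients already established and checking functoriality. On objects, given $((\bb{M},\omega),\sigma)$ in $\msf{sGK}^=_{2n\vert a,b}$, Proposition \ref{prop-gkfromExp} produces the $\mrm{Sp}(2n\vert a,b)$-equivariant lift $\tilde{\sigma}\colon\Fr_\bb{M}\rta\bb{M}^\mrm{coor}$ and the flat connection $\tilde{\sigma}^*(\nu^\mrm{coor})$, making $\Fr_\bb{M}$ into a $\gK$-bundle; this is the object assignment. On morphisms, given $f\colon(\bb{M}_1,\sigma_1)\rta(\bb{M}_2,\sigma_2)$ in $\msf{sGK}^=_{2n\vert a,b}$, I first note that $f$ induces a map $\Fr(f)\colon\Fr_{\bb{M}_1}\rta\Fr_{\bb{M}_2}$ of symplectic frame bundles (the derivative of $f$ intertwines the symplectic structures, since $f$ is a local symplectomorphism, so it acts on frames). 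I then need to verify that $\Fr(f)$ is a morphism in $\msf{Bun}^\mrm{flat}_{\gK}$, i.e., that $\Fr(f)^*\big(\tilde{\sigma}_2^*\nu^\mrm{coor}_{\bb{M}_2}\big)=\tilde{\sigma}_1^*\nu^\mrm{coor}_{\bb{M}_1}$.

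The key step is to relate the lifts $\tilde{\sigma}_i$ via the coordinate-bundle map $f^\mrm{coor}$ of Lemma \ref{lem-CoorFunctorial}. The equality defining a morphism in $\msf{sGK}^=_{2n\vert a,b}$ (Variant \ref{var-sgk=}) says exactly that $f^\mrm{coor}\circ\sigma_1=\sigma_2\circ f$ as maps $\bb{M}_1\rta\mrm{Exp}(\bb{M}_2)$. Passing from $\mrm{Exp}(\bb{M}_i)=\bb{M}_i^\mrm{coor}/\mrm{Sp}(2n\vert a,b)$ back up to the frame bundles using $\mrm{Sp}(2n\vert a,b)$-equivariance, this upgrades to a commuting square
\[\begin{xymatrix}{
\Fr_{\bb{M}_1}\arw[r]^{\tilde{\sigma}_1}\arw[d]_{\Fr(f)} & \bb{M}_1^\mrm{coor}\arw[d]^{f^\mrm{coor}}\\
\Fr_{\bb{M}_2}\arw[r]_{\tilde{\sigma}_2} & \bb{M}_2^\mrm{coor}.
}\end{xymatrix}\]
Then, since $\nu^\mrm{coor}$ is natural in $\bb{M}$ — i.e., $(f^\mrm{coor})^*\nu^\mrm{coor}_{\bb{M}_2}=\nu^\mrm{coor}_{\bb{M}_1}$, which holds because both sides are the canonical flat connection arising as the inverse of the $\mfrk{g}_{2n\vert a,b}$-action and $f^\mrm{coor}$ is constructed to intertwine these actions — we compute $\Fr(f)^*\tilde{\sigma}_2^*\nu^\mrm{coor}_{\bb{M}_2}=\tilde{\sigma}_1^*(f^\mrm{coor})^*\nu^\mrm{coor}_{\bb{M}_2}=\tilde{\sigma}_1^*\nu^\mrm{coor}_{\bb{M}_1}$, as needed. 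Finally I check compatibility with composition and identities, which is immediate from functoriality of $\Fr(-)$, of $(-)^\mrm{coor}$ (Lemma \ref{lem-CoorFunctorial}), and of the lifting in Proposition \ref{prop-gkfromExp}; and I observe that the construction lives over $\msf{sMfld}_{2n\vert a,b}$ by design.

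The main obstacle is establishing the commuting square relating $\tilde{\sigma}_1$, $\tilde{\sigma}_2$, and $f^\mrm{coor}$: Proposition \ref{prop-gkfromExp} only gives each $\tilde{\sigma}_i$ individually as a lift of $\sigma_i$ along $\bb{M}_i^\mrm{coor}\rta\mrm{Exp}(\bb{M}_i)$, and one must check that the specific lifts chosen are compatible under $f^\mrm{coor}$. This requires unwinding the $\mrm{Sp}(2n\vert a,b)$-torsor structure: the square of quotients commutes by the morphism condition in Variant \ref{var-sgk=}, and a lift on $\bb{M}_1$ determines a unique compatible lift on $\bb{M}_2^\mrm{coor}$ once we use that $\Fr_{\bb{M}_1}\rta\Fr_{\bb{M}_2}$ and $\bb{M}_1^\mrm{coor}\rta\bb{M}_2^\mrm{coor}$ are both $\mrm{Sp}(2n\vert a,b)$-equivariant over the respective quotient maps — so the lift $\tilde\sigma_2\circ\Fr(f)$ and the lift $f^\mrm{coor}\circ\tilde\sigma_1$ agree because they descend to the same section and differ, a priori, only by a gauge transformation valued in $\mrm{Sp}(2n\vert a,b)$ which the equivariance forces to be trivial. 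Care is needed here because everything takes place in the pro-category of pro-supermanifolds, so I would phrase the argument at the level of the representing functors $\mathcal{F}_{\bb{M}_i^\mrm{coor}}$ as in the proof of Lemma \ref{lem-CoorFunctorial}, where the naturality is manifest.
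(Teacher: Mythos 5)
Your proposal is correct and follows essentially the same route as the paper: objects are handled by Proposition \ref{prop-gkfromExp}, morphisms by the induced map $df$ on symplectic frame bundles, and the connection compatibility $(df)^*\tilde{\sigma}_2^*(\nu^\mrm{coor}_{\bb{M}_2})=\tilde{\sigma}_1^*(\nu^\mrm{coor}_{\bb{M}_1})$ is reduced to the commuting square relating $\tilde{\sigma}_1$, $\tilde{\sigma}_2$, and $f^\mrm{coor}$, which the paper packages as the commutativity of a cube. You additionally make explicit the naturality $(f^\mrm{coor})^*\nu^\mrm{coor}_{\bb{M}_2}=\nu^\mrm{coor}_{\bb{M}_1}$ and the uniqueness of the equivariant lifts, details the paper leaves implicit.
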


\begin{proof}
This functor is defined on objects by Proposition \ref{prop-gkfromExp}. 
We need to define $\mrm{Fr}$ on morphisms. 
Let $(\bb{M}_1,\sigma_1)$ and $(\bb{M}_2,\sigma_2)$ be two objects in $\msf{sGK}_{2n\vert a,b}^=$, 
and $f\colon\bb{M}_1\rta\bb{M}_2$ a morphism between them. 
The map $f$ induces a map of $\mrm{Sp}(2n\vert a,b)$-bundles 

\[df\colon\Fr_{\bb{M}_1}\rta\Fr_{\bb{M}_2}.\] 

For $df$ to be a map of $\gK$-bundles, 
we need the $\mfrk{g}_{2n\vert a,b}$-valued connection on $\Fr_{\bb{M}_2}$ to pullback to the one on $\Fr_{\bb{M}_1}$. 
By definition of the functor $\mrm{Fr}$ on objects, 
the connection on $\Fr_{\bb{M}_i}$ is 
$\tilde{\sigma}_i^*(\nu^\mrm{coor}_{\bb{M}_i})$, 
for $i=1,2$. 
Thus it suffices to show that there is an equality 
\[(df)^*\tilde{\sigma}_2^*(\nu^\mrm{coor}_{\bb{M}_2})=\tilde{\sigma}_1^*(\nu^\mrm{coor}_{\bb{M}_1}).\] 
This follows from the commutativity of the cube 

\begin{center}
\begin{tikzcd}[row sep=scriptsize, column sep=scriptsize]
& \Fr_{\bb{M}_1} \arrow[dl,"df" description] \arrow[rr,"\tilde{\sigma}_1" description] \arrow[dd] & & \bb{M}_1^\mrm{coor} \arrow[dl,"f^\mrm{coor}" description] \arrow[dd] \\
\Fr_{\bb{M}_2} \arrow[rr, crossing over,"\tilde{\sigma}_2" description] \arrow[dd] & & \bb{M}_2^\mrm{coor} \\
& \bb{M}_1 \arrow[dl,"f" description] \arrow[rr,"\sigma_1" description] & & \mrm{Exp}(\bb{M}_1) \arrow[dl,"f^\mrm{coor}" description] \\
\bb{M}_2 \arrow[rr,"\sigma_2" description] & & \mrm{Exp}(\bb{M}_2) \arrow[from=uu, crossing over]\\
\end{tikzcd}.
\end{center}
\end{proof}

\subsubsection{Cotangent Bundle Example}

We discuss formal exponentials on symplectic supermanifolds of the form in Example \ref{ex-Rothstein}. 
Recall from Theorem \ref{thm-Rothstein} that symplectic supermanifolds non-canonically look like $(E[1],\tilde{\omega})$ where 
$E\rta M$ is a vector bundle on an ordinary symplectic manifold $(M,\omega)$, 
and $\tilde{\omega}$ is defined using a metric $g$ on $E$ and a compatible connection $\nabla$.

Recall that a \emph{symplectic connection} on an ordinary symplectic manifold $(M,\omega)$ is a torsion-free connection so that $\omega$ is constant with respect to the covariant derivative, 
see for example \cite[Def. 2.1]{BCG} or \cite[Def. 2.1]{Gelfand}.

\begin{lem}\label{lem-ExpConnection}
If $\bb{M}=E[1]$ is the symplectic supermanifold defined in Example \ref{ex-Rothstein} from the data $(M,\omega,E,g,\nabla)$, 
then a symplectic connection on $M$ determines a formal exponential on $\bb{M}$.
\end{lem}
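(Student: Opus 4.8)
The plan is to construct the formal exponential directly from the symplectic connection, imitating the classical Fedosov construction of normal coordinates but keeping track of the odd directions. Recall that a formal exponential on $\bb{M}$ is a section of $\mrm{Exp}(\bb{M})=\bb{M}^\mrm{coor}/\mrm{Sp}(2n\vert a,b)\rta\bb{M}$, equivalently a lift to a $\mrm{Sp}(2n\vert a,b)$-equivariant map $\Fr_\bb{M}\rta\bb{M}^\mrm{coor}$ (Proposition \ref{prop-gkfromExp}). So at each point $x\in\bb{M}$ with a chosen symplectic frame (an identification $(T_x\bb{M},\omega\vert_x)\cong(\bb{R}^{2n\vert r},\omega_Q)$), I need to produce an isomorphism of Poisson superalgebras $\widehat{\mathcal{O}}_{\bb{M},x}\cong\widehat{\mathcal{O}}_{2n\vert a,b}$ that reduces to the chosen frame at first order, and this assignment must be smooth in $x$ and equivariant.

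First I would decompose the data using Rothstein's theorem (Theorem \ref{thm-Rothstein}): $\bb{M}=E[1]$ with $\mathcal{O}_\bb{M}=\Gamma(M,\Lambda^\bullet E^\vee)$ and the symplectic form $\tilde\omega$ built from $(\omega,g,\nabla)$. In the even directions, the symplectic connection $\nabla^{\mathrm{symp}}$ on $M$ gives, via its exponential map $\exp^{\nabla^{\mathrm{symp}}}_x\colon T_xM\dashrightarrow M$ (a formal diffeomorphism near $x$), a formal Darboux chart: because $\nabla^{\mathrm{symp}}$ is torsion-free and preserves $\omega$, the pullback $(\exp_x^{\nabla^{\mathrm{symp}}})^*\omega$ agrees with the constant form $\omega\vert_x$ to high order, and one can correct it to be exactly constant by a further formal symplectomorphism fixing $x$ to first order — this is exactly the classical fact underlying Fedosov's construction, see the references \cite{Gelfand,BCG}. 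In the odd directions I would use the connection $\nabla$ on $E$: parallel transport along the even exponential trivializes $E$ over the formal neighborhood, identifying $\Lambda^\bullet E^\vee$ near $x$ with $\Lambda^\bullet (E_x^\vee)$ tensored with the even formal functions, and since $g$ is $\nabla$-flat the odd part of $\tilde\omega$ becomes the constant quadratic form $Q$ of signature $(a,b)$. Combining these gives the desired Poisson-superalgebra isomorphism $\widehat{\mathcal{O}}_{\bb{M},x}\cong\widehat{\mathcal{O}}_{2n\vert a,b}$ depending only on the frame at $x$.

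Next I would package this pointwise construction into a global section: the assignment $x\mapsto$ (the above isomorphism, as a function of the symplectic frame) is manifestly $\mrm{Sp}(2n\vert a,b)$-equivariant since changing the frame by $A\in\mrm{Sp}(2n\vert a,b)$ post-composes the chart with $A$, and it is smooth in $x$ because both $\exp^{\nabla^{\mathrm{symp}}}$ and parallel transport for $\nabla$ depend smoothly on the base point and the connection data. Via the functor-of-points description of $\bb{M}^\mrm{coor}$ used in \S\ref{subsubsec-FormalCoord}, this produces a genuine $\mrm{Sp}(2n\vert a,b)$-equivariant map $\Fr_\bb{M}\rta\bb{M}^\mrm{coor}$, hence by Proposition \ref{prop-gkfromExp} (read backwards) a section of $\mrm{Exp}(\bb{M})\rta\bb{M}$, i.e. a formal exponential.

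The main obstacle I expect is the even-direction step: verifying that the symplectic exponential map, suitably corrected, genuinely lands in $\mrm{Aut}_{2n\vert a,b}$ — that is, that the resulting formal chart intertwines the \emph{Poisson} structures and not merely the underlying algebras — and doing this in a way that is transparently smooth and equivariant rather than pointwise ad hoc. The cleanest route is probably to cite the classical result that a symplectic connection yields a formal exponential in the non-super case (this is \cite[Lem. 2.?]{Gelfand} / \cite{BCG}, and is exactly the "$\nabla$ determines $\sigma$" statement referenced after Theorem \ref{thm-1}) and then argue that the odd directions contribute no new difficulty because $(g,\nabla)$-flatness makes the odd symplectic form literally constant after parallel transport; the only genuinely new bookkeeping is the interaction between the even formal coordinates and the $\Lambda^\bullet E^\vee$ factor, which is controlled by the compatibility $f^*\nabla'=\nabla$ built into the morphisms of $\msf{VB}_{/X}^{\mathrm{quad},\nabla}$.
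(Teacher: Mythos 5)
Your proposal follows essentially the same route as the paper's (admittedly sketchy) proof: split the problem into the even part, where a symplectic connection is known to give a section of $M^{\mathrm{coor}}$ as Poisson algebras (the paper cites Willwacher for the torsion-free case where you invoke the geodesic exponential plus a Moser-type correction), and the odd part, where parallel transport for $\nabla$ trivializes $\Lambda^\bullet E^\vee$ formally and $\nabla$-flatness of $g$ makes the odd quadratic form constant, then combine into an $\mathrm{Sp}(2n\vert a,b)$-equivariant lift $\mathrm{Fr}_{\bb{M}}\to\bb{M}^{\mathrm{coor}}$. Your write-up is, if anything, more explicit than the paper's about where the even-direction Poisson isomorphism comes from, and the obstacle you flag (canonicity of the formal symplectomorphism correcting the exponential chart) is precisely the point the paper delegates to the classical references.
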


See \cite[Pg. 3-4]{Engeli} for a description of the resulting differential on $\mathbf{desc}_{(\bb{M},\sigma)}(-)$. 

\begin{proof}
Willwacher in \cite[\S 2.5]{Willwacher} has shown that a torsion-free connection on an ordinary manifold $X$ gives a section of $X^\mrm{coor}$. 
So a torsion-free connection produces a compatible choice of, for each $x\in X$, an isomorphism 
\[\bb{k}[[x_1,\dots,x_n]]=\widehat{\mathcal{O}}_{n}\simeq\widehat{\mathcal{O}}_{X,x}.\]

Similarly, a symplectic connection on an ordinary symplectic manifold $M$ gives a section of $M^\mrm{coor}$. 
So a symplectic connection produces a compatible choice of, for each $x\in M$, an isomorphism of Poisson algebras 
$\widehat{\mathcal{O}}_{2n}\simeq\widehat{\mathcal{O}}_{M,x}$. 

In the purely odd case, a connection on a vector bundle $E\rta X$ produces 
a compatible choice of, for each $x\in X$, an isomorphism of algebras 
\[\Lambda^\bullet[[\theta_1,\dots,\theta_r]]\simeq\Gamma(X,\Lambda^\bullet E)\hat{}_x.\]

Combining these, a symplectic connection on $M$ and a metric connection on a quadratic vector bundle $E\rta M$ produces a compatible choice of, 
for each $x\in X$, 
an isomorphism of Poisson super-algebras 
\[\widehat{\mathcal{O}}_{2n\vert a,b}\simeq(\widehat{\mathcal{O}}_{E[1]})_x\simeq\Gamma(M,\Lambda^\bullet E)\hat{}_x.\]
This data is a formal exponential on $E[1]$.
\end{proof}

\begin{rmk}\label{rmk-VBsGK}
Given an ordinary manifold $X$, the symplectic manifold $T^*X$ has a canonical symplectic connection. 

Let $\pi\colon T^*X\rta X$ be the projection. 
Consider the functor 

\[T^*\colon \msf{VB}^{\mrm{quad},\nabla}_{/X}\rta\msf{VB}^{\mrm{quad},\nabla}_{/T^*X}\]

sending a $E\rta X$ to $\pi^*E\rta T^*X$ and the metric and connection on $E$ to the pullback metric and connection, respectively. 
Using the canonical symplectic connection on the cotangent bundle \cite{BNW}, 
Lemma \ref{lem-ExpConnection} allows one to define a lift 

\[\begin{xymatrix}
{
& & \msf{sGK}\arw[d]\\
\msf{VB}_{/X}^{\mrm{quad},\nabla}\arw[r]_{T^*}\ar@{-->}[urr]^{\tilde{L}} & \msf{VB}^{\mrm{quad},\nabla}_{/T^*X}\arw[r] & \msf{sMfld}^\mrm{Sp}
}
\end{xymatrix}\]

\noindent where the categories of manifolds here are not restricted to a particular type $(2n\vert a,b)$. 

Just as the cotangent bundle $T^*X$ has a canonical deformation quantization by differential operators on $X$, 
the lift $\tilde{L}$ will allow us to construct deformation quantizations for symplectic supermanifolds built from vector bundles over the cotangent bundle.
\end{rmk}

\subsection{Descent Functor}\label{subsec-Descent}

We will discuss Harish-Chandra descent for the sHC pair $\gK$. 
After studying some monoidal properties of this descent functor, 
we will construct the super-Gelfand-Kazhdan descent functor that will be used in later sections. 

\begin{convention}\label{conv-HC}
Throughout this section, let 

\begin{itemize}

\item $(\mfrk{g},K)$ be an sHC pair, 

\item $\msf{Mod}_{(\mfrk{g},K)}$ denote the category of $(\mfrk{g},K)$-modules, 

\item $\msf{Mod}^\mrm{fin}_{(\mfrk{g},K)}$ denote the category of finite-dimensional $(\mfrk{g},K)$-modules, 

\item $\msf{VB}^\mrm{flat}_{2n\vert a,b}$ denote the category, 
fibered over $\msf{sMfld}_{2n\vert a,b}$, 
of flat finite-dimensional vector bundles, 

\item $\msf{VB}^\mrm{flat}_{/\bb{M}}$ denote the category of flat finite-dimensional vector bundles over a symplectic supermanifold $\bb{M}$, 

\item $\msf{Pro}(\msf{VB}_{/\bb{M}})^\mrm{flat}$ denote the category of pro-objects in $\msf{VB}_{/\bb{M}}$ together with a flat connection, and  

\item $\msf{Mod}_{\Omega^\bullet}$ denote the category, 
fibered over $\msf{sMfld}_{2n\vert a,b}$, 
of symplectic supermanifolds $(\bb{M},\omega)$ together with a module over the superalgebra $\Omega^\bullet_\bb{M}$. 
\end{itemize}
\end{convention}

Given a flat $(\mfrk{g}, K)$-bundle $P\rta \bb{M}$ with connection 1-form $\nu\in\Omega^1(P;\mfrk{g})$ and a finite-dimensional $(\mfrk{g},K)$-module $V$, 
we obtain a vector bundle on $\bb{M}$ using the Borel construction, 
$P\times_KV$. 
We can equip $P\times_KV$ with a flat connection using $\omega$ and the action $\rho_\mfrk{g}^V$ of $\mfrk{g}$ on $V$ as follows. 
The action of $\mfrk{g}$ on $V$ induces a map

\begin{align}\label{eq-Rho}
\rho_\mfrk{g}^V(\nu)\colon \Omega^\bullet(P;\underline{V})\rta\Omega^{\bullet+1}(P;\underline{V})
\end{align}

defined by $\rho_\mfrk{g}^V(\nu)(-)=\rho_\mfrk{g}^V(\nu\wedge -)$. 
Now, $\nabla^{P,V}=d_\mrm{dR, P}+\rho_\mfrk{g}^V(\nu)$ defines a differential on the subalgebra of basic forms, 
and hence a flat connection on $P\times_KV$. 
See \cite[Lem. 1.12]{GGW} for the non-super case.

As in \cite[Def. 1.14]{GGW}, given an sHC-pair $(\mfrk{g},K)$, \emph{Harish-Chandra} descent is the resulting functor

\[\mrm{desc}\colon\msf{Bun}_{(\mfrk{g},K)}^\mrm{flat}\times \msf{Mod}^\mrm{fin}_{(\mfrk{g},K)}\rta\msf{VB}^\mrm{flat}_{2n\vert a,b}\] 

\noindent sending $(P\rta\bb{M}, V)$ to $(P\times_KV\rta \bb{M},\nabla^{P,V})$.

Taking the de Rham complex of the flat vector bundle produces a functor 

\[\mathbf{desc}\colon(\msf{Bun}_{(\mfrk{g},K)}^\mrm{flat})^\mrm{op}\times\msf{Mod}^\mrm{fin}_{(\mfrk{g},K)}\rta\msf{Mod}_{\Omega^\bullet}.\]

\begin{ex}
Take $(\mfrk{g},K)$ to be the sHC pair $\gK$. 
Restricting along the functor $\mrm{Fr}$ of Corollary \ref{cor-FrFunctor}, 
we obtain a descent functor 
\[\msf{sGK}^=_{2n\vert a,b}\times\msf{Mod}^\mrm{fin}_{\gK}\rta\msf{VB}^\mrm{flat}_{2n\vert a,b}.\]
\end{ex}

\subsubsection{Monoidal Properties of Descent}

Let $(\mfrk{g},K)$ be an sHC pair. 
Restricting to a fixed $(\mfrk{g},K)$-bundle $(P\rta\bb{M},\nu)$, we have a functor 

\[\mrm{desc}_{P,\nu}\colon \msf{Mod}^\mrm{fin}_{(\mfrk{g},K)}\rta \msf{VB}^\mrm{flat}_{/\bb{M}}.\]

The category $\msf{Mod}^\mrm{fin}_{(\mfrk{g},K)}$ has a symmetric monoidal structure given by $\otimes_\bb{k} $. 
The category $\msf{VB}^\mrm{flat}_{/\bb{M}}$ has a symmetric monoidal structure by taking tensor product of vector bundles and flat connections. 

The following foundational observation allows us to deduce several nice properties of Harish-Chandra, 
and in particular super-Gelfand-Kazhdan, descent.

\begin{prop}
The functor $\mrm{desc}_{P,\nu}$ is symmetric monoidal.
\end{prop}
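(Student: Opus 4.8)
The plan is to unwind the definition of $\mrm{desc}_{P,\nu}$ and check that each piece of the Borel construction, together with the induced flat connection, is compatible with tensor products. First I would recall that $\mrm{desc}_{P,\nu}(V) = (P\times_K V, \nabla^{P,V})$, so the claim has two parts: (i) the underlying vector-bundle assignment $V\mapsto P\times_K V$ is symmetric monoidal, and (ii) the flat connections $\nabla^{P,V}$ assemble monoidally. For (i), the key point is the natural isomorphism $(P\times_K V)\otimes_{\mathcal{O}_\bb{M}}(P\times_K W)\cong P\times_K(V\otimes_\bb{k} W)$, which holds because $P\times_K(-)$ is, fiberwise over $x\in\bb{M}$, the functor $V\mapsto P_x\times_K V$ sending a finite-dimensional $K$-representation to the associated vector space; this functor is visibly symmetric monoidal (it is a form of induction/associated-bundle construction), and the isomorphism is natural in both variables and respects the symmetry and unit ($\bb{k}$ with the trivial action goes to $\underline{\bb{k}}_\bb{M}$). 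One must check this is an isomorphism of $K$-equivariant bundles over $P$ descending to $\bb{M}$, which is a routine diagram chase.

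Next I would verify that under the identification from (i), the tensor-product connection $\nabla^{P,V}\otimes 1 + 1\otimes\nabla^{P,W}$ corresponds to $\nabla^{P,V\otimes W}$. Here the essential computation is at the level of the forms $\rho_\mfrk{g}^V(\nu)$ appearing in \eqref{eq-Rho}: the action of $\mfrk{g}$ on $V\otimes_\bb{k} W$ is the derivation action $\rho_\mfrk{g}^{V\otimes W}(\xi) = \rho_\mfrk{g}^V(\xi)\otimes 1 + 1\otimes\rho_\mfrk{g}^W(\xi)$ (a Lie-superalgebra module structure on a tensor product, with the appropriate Koszul sign built into the graded tensor product of the two module structures), so that $\rho_\mfrk{g}^{V\otimes W}(\nu) = \rho_\mfrk{g}^V(\nu)\otimes 1 + 1\otimes\rho_\mfrk{g}^W(\nu)$ as operators on $\Omega^\bullet(P;\underline{V}\otimes\underline{W})$. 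Since $d_{\mrm{dR},P}$ is also a (graded) derivation for the tensor product of de Rham complexes, adding these gives exactly the Leibniz-rule form of the tensor connection, and restricting to basic forms descends this identity to $\bb{M}$. I would note the unit case separately: the trivial module $\bb{k}$ has zero $\mfrk{g}$-action, so $\nabla^{P,\bb{k}} = d_{\mrm{dR}}$ is the trivial flat connection on $\underline{\bb{k}}_\bb{M}$, which is the monoidal unit in $\msf{VB}^\mrm{flat}_{/\bb{M}}$.

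Finally I would record that the associativity, unit, and symmetry constraints of $\mrm{desc}_{P,\nu}$ are the ones induced by (i) from the corresponding constraints on $\msf{Mod}^\mrm{fin}_{(\mfrk{g},K)}$, and that these are automatically flat-connection-preserving by the computation in the previous paragraph, hence are isomorphisms in $\msf{VB}^\mrm{flat}_{/\bb{M}}$; the coherence diagrams (pentagon, hexagon) then commute because they already commute at the level of $\bb{k}$-modules and the associated-bundle functor carries them along. The main obstacle, and the only place where the super setting genuinely enters, is bookkeeping the Koszul signs: one must make sure the symmetry isomorphism $V\otimes W\cong W\otimes V$ of super $(\mfrk{g},K)$-modules (which involves the sign $(-1)^{|v||w|}$) is compatible with the symmetry of the graded tensor product of vector bundles and with the $\mfrk{g}$-action on tensor products, so that $\rho_\mfrk{g}^{V\otimes W}$ really is the graded derivation action and not something off by a sign. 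Once the sign conventions are fixed consistently (as in Definition \ref{def-symportho} and the surrounding material), the verification is mechanical, so I would state the sign conventions explicitly and then leave the remaining checks to the reader.
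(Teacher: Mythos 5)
Your proof is correct and follows essentially the same route as the paper: check that the Borel construction is symmetric monoidal fiberwise, then verify that $\nabla^{P,V\otimes W}=d_{\mrm{dR},P}+\rho_{\mfrk{g}}^{V\otimes W}(\nu)$ agrees with the tensor-product connection because the $\mfrk{g}$-action on $V\otimes W$ is the derivation action. Your explicit Leibniz-rule formula $\rho_\mfrk{g}^{V\otimes W}(\nu)=\rho_\mfrk{g}^V(\nu)\otimes 1+1\otimes\rho_\mfrk{g}^W(\nu)$ is in fact the correct reading of what the paper writes more tersely, and your attention to the Koszul signs and coherence constraints only adds detail the paper leaves implicit.
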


\begin{proof}
Let $V,W\in\msf{Mod}^\mrm{fin}_{(\mfrk{g},K)}$. 
The Borel construction is symmetric monoidal, 
\[P\times_K(V\otimes W)\simeq (P\times_KV)\otimes (P\times_KW),\]
as one can check on fibers. 
It therefore suffices to show that the connection on $\mrm{desc}_{P,\nu}(V\otimes W)$ is the tensor product of the connection on $\mrm{desc}_{(P,\nu)}(V)$ and on $\mrm{desc}_{(P,\nu)}(W)$. 
From the construction of the connection, Equation (\ref{eq-Rho}) or \cite[\S 1.3.2]{GGW}, 
we have $\nabla^{P,V\otimes W}=d_{\mrm{dR}, P}+\rho_{\mfrk{g}}^{V\otimes W}$, 
where $\rho_\mfrk{g}^{V\otimes W}(\nu)$ is defined using the action of $\mfrk{g}$ on $V\otimes W$. 
Since the tensor product $V\otimes W$ is taken in $\msf{Mod}_{(\mfrk{g},K)}$, 
we have $\rho_\mfrk{g}^{V\otimes W}(\nu)=\rho_\mfrk{g}^V(\nu)\otimes\rho_\mfrk{g}^W(\nu)$.
\end{proof}

From this proposition, we will be able to deduce several corollaries of how the descent functors interact with algebraic structures. 

\begin{cor}
The de Rham complex functor $\mathbf{desc}_{(P,\nu)}\colon\msf{Mod}^\mrm{fin}_{(\mfrk{g},K)}\rta\msf{Mod}_{\Omega^\bullet_\bb{M}}$ is symmetric monoidal.
\end{cor}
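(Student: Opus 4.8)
The plan is to deduce this from the preceding proposition that $\mrm{desc}_{P,\nu}$ is symmetric monoidal, using the fact that taking the de Rham complex of a flat vector bundle is itself a symmetric monoidal functor from $\msf{VB}^\mrm{flat}_{/\bb{M}}$ to $\msf{Mod}_{\Omega^\bullet_\bb{M}}$. More precisely, $\mathbf{desc}_{(P,\nu)}$ is by definition the composite of $\mrm{desc}_{P,\nu}\colon\msf{Mod}^\mrm{fin}_{(\mfrk{g},K)}\rta\msf{VB}^\mrm{flat}_{/\bb{M}}$ with the de Rham functor $(E,\nabla)\mapsto(\Omega^\bullet_\bb{M}(E),\nabla)$, and a composite of symmetric monoidal functors is symmetric monoidal, so it suffices to establish the monoidality of the de Rham functor.

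First I would recall that for a flat vector bundle $(E,\nabla^E)$ on $\bb{M}$, the associated object of $\msf{Mod}_{\Omega^\bullet_\bb{M}}$ is $\Omega^\bullet_\bb{M}(E)=\Omega^\bullet_\bb{M}\otimes_{\mathcal{O}_\bb{M}}\Gamma(E)$ with differential the covariant de Rham operator $d^{\nabla^E}$. Then I would exhibit the natural map
\[\Omega^\bullet_\bb{M}(E)\otimes_{\Omega^\bullet_\bb{M}}\Omega^\bullet_\bb{M}(F)\rta\Omega^\bullet_\bb{M}(E\otimes F),\]
induced by the wedge product on forms together with the identity on $E\otimes F$, and check that it is an isomorphism of $\Omega^\bullet_\bb{M}$-modules (this is a local computation, reducing to the case of trivialized bundles where both sides are $\Omega^\bullet_\bb{M}\otimes V\otimes W$). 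The content is then that this map intertwines the differentials: $d^{\nabla^{E\otimes F}}$ equals the graded Leibniz combination of $d^{\nabla^E}$ and $d^{\nabla^F}$, which is exactly the statement that $\nabla^{E\otimes F}=\nabla^E\otimes 1+1\otimes\nabla^F$, i.e. that the tensor-product flat connection on $E\otimes F$ is what the monoidal structure on $\msf{VB}^\mrm{flat}_{/\bb{M}}$ assigns. One also checks the unit: $\Omega^\bullet_\bb{M}$ with its de Rham differential is the de Rham complex of the trivial flat line bundle, the monoidal unit of $\msf{VB}^\mrm{flat}_{/\bb{M}}$. Compatibility with associativity and symmetry constraints is inherited from that of the wedge product, with the usual Koszul signs accounting for the $\bb{Z}/2$-grading in the super setting.

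I would then conclude by composing: since $\mrm{desc}_{P,\nu}$ is symmetric monoidal by the previous proposition and the de Rham functor $\msf{VB}^\mrm{flat}_{/\bb{M}}\rta\msf{Mod}_{\Omega^\bullet_\bb{M}}$ is symmetric monoidal by the above, their composite $\mathbf{desc}_{(P,\nu)}$ is symmetric monoidal.

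The only point requiring genuine care is the sign bookkeeping: in the super-symplectic setting both $\Omega^\bullet_\bb{M}$ and the fibers of the bundles carry $\bb{Z}/2$-gradings, so the braiding isomorphism $\Omega^\bullet_\bb{M}(E)\otimes\Omega^\bullet_\bb{M}(F)\simeq\Omega^\bullet_\bb{M}(F)\otimes\Omega^\bullet_\bb{M}(E)$ must use the total degree (form-degree plus internal parity), and one must verify the Koszul-sign version of the Leibniz rule $d^{\nabla^{E\otimes F}}(\alpha\otimes\beta)=d^{\nabla^E}\alpha\otimes\beta+(-1)^{|\alpha|}\alpha\otimes d^{\nabla^F}\beta$ holds with the correct interpretation of $|\alpha|$. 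This is routine but is the step most likely to hide an error, so I would spell it out on generators of a local trivialization rather than invoke it abstractly.
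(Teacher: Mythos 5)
Your proposal is correct and follows the same route as the paper: the paper's entire proof is the one-line observation that the functor $\msf{VB}^\mrm{flat}_{/\bb{M}}\rta\msf{Mod}_{\Omega^\bullet_\bb{M}}$ is symmetric monoidal, so the corollary follows by composing with the previous proposition. You simply spell out the details (the comparison isomorphism, the Leibniz rule, and the Koszul signs) that the paper leaves implicit.
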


\begin{proof}
The functor $\msf{VB}^\mrm{flat}_{/\bb{M}}\rta\msf{Mod}_{\Omega^\bullet_\bb{M}}$ is symmetric monoidal.  
\end{proof}

\begin{cor}
Let $\bb{k} \in\msf{Mod}_{(\mfrk{g},K)}$ be the unit module. 
Then $\mrm{desc}_{(P,\nu)}(\bb{k} )$ is the trivial line bundle on $\bb{M}$ with connection given by the de Rham differential  
and $\mathbf{desc}_{(P,\nu)}(\bb{k} )$ is $\Omega^\bullet_\bb{M}$. 
\end{cor}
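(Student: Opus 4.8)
The plan is to deduce the statement from the preceding corollary, namely that $\mathrm{desc}_{(P,\nu)}$ (hence $\mathbf{desc}_{(P,\nu)}$) is symmetric monoidal, by the elementary principle that a symmetric monoidal functor carries the monoidal unit to the monoidal unit. So the first step is to identify the three units in play. In $\msf{Mod}^\mrm{fin}_{(\mfrk{g},K)}$ with its $\otimes_\bb{k}$-structure, the unit is precisely the object called $\bb{k}$: the one-dimensional space with trivial $K$-action and trivial $\mfrk{g}$-action (the compatibility of the two trivial actions is immediate, and the triviality of the $\mfrk{g}$-action is what we will use below). In $\msf{VB}^\mrm{flat}_{/\bb{M}}$ with tensor product of bundles-with-flat-connection, the unit is the trivial line bundle $\underline{\bb{k}}_\bb{M}$ equipped with the flat connection whose covariant differential is the de Rham differential $d_\mrm{dR}$ on $\mathcal{O}_\bb{M}$-valued forms; flatness here is just $d_\mrm{dR}^2=0$. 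In $\msf{Mod}_{\Omega^\bullet_\bb{M}}$ the unit is $\Omega^\bullet_\bb{M}$ regarded as a module over itself. Granting the corollary, the two claims then follow formally.

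For completeness I would also check both assertions directly by unwinding the Borel construction, which makes the identification of units concrete. Since $K$ acts trivially on $\bb{k}$, the associated bundle is $P\times_K\bb{k}\cong\bb{M}\times\bb{k}=\underline{\bb{k}}_\bb{M}$, the trivial line bundle. Its connection, by the recipe $\nabla^{P,\bb{k}}=d_{\mrm{dR},P}+\rho_\mfrk{g}^{\bb{k}}(\nu)$ of Equation \ref{eq-Rho}, reduces to $d_{\mrm{dR},P}$ because $\rho_\mfrk{g}^{\bb{k}}=0$: the $\mfrk{g}$-action on the unit module is trivial, so $\rho_\mfrk{g}^{\bb{k}}(\nu)$ is the zero operator on $\Omega^\bullet(P;\underline{\bb{k}})$. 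Restricting $d_{\mrm{dR},P}$ to the basic forms of the principal $K$-bundle $P\rta\bb{M}$ recovers exactly $(\Omega^\bullet_\bb{M},d_\mrm{dR})$, i.e.\ the trivial line bundle on $\bb{M}$ with de Rham connection, proving the first claim. Taking the de Rham complex of this flat bundle, i.e.\ applying $\mathbf{desc}_{(P,\nu)}$, then yields the $\Omega^\bullet_\bb{M}$-module $(\Omega^\bullet_\bb{M}\otimes_{\mathcal{O}_\bb{M}}\underline{\bb{k}}_\bb{M}, d_\mrm{dR})=\Omega^\bullet_\bb{M}$, which is the second claim.

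There is no real obstacle here; the content is entirely bookkeeping. The only point that deserves a sentence of care is the identification of the basic forms of $P\rta\bb{M}$ with valued-in-$\underline{\bb{k}}$ coefficients as $\Omega^\bullet_\bb{M}$, together with the matching of the restricted differential $d_{\mrm{dR},P}$ with $d_\mrm{dR}$ on $\bb{M}$; this is standard for principal bundles and carries over to the pro-supermanifold setting verbatim. Thus the main ``work'' is simply to state precisely what the monoidal units are on each side and invoke the preceding corollary.
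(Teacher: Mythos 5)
Your proposal is correct and matches the paper's own argument, which likewise just observes that symmetric monoidal functors take units to units and identifies the units in $\msf{VB}^\mrm{flat}_{/\bb{M}}$ and $\msf{Mod}_{\Omega^\bullet_\bb{M}}$. Your extra direct verification via the Borel construction and Equation \ref{eq-Rho} is a fine (if unnecessary) supplement to the same idea.
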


\begin{proof}
Symmetric monoidal functors take units to units. 
The units of $\msf{VB}^\mrm{flat}_{/\bb{M}}$ and $\msf{Mod}_{\Omega^\bullet_\bb{M}}$ are as described. 
\end{proof}

\begin{ex}
In particular, the space of horizontal sections of $\mrm{desc}_{(P,\nu)}(\bb{k} )$ is $\mathcal{O}_\bb{M}$.
\end{ex}

For a symmetric monoidal $\bb{k} $-linear category $\mathcal{V}$, 
let $\msf{Alg}(\mathcal{V})$ denote the category of algebra 
objects in $\mathcal{V}$. 

\begin{cor}
The descent functors lifts to symmetric monoidal functors on the level of algebra objects, 

\[\msf{Alg}(\msf{Mod}^\mrm{fin}_{(\mfrk{g},K)})\rta \msf{Alg}(\msf{VB}^\mrm{flat}_{/\bb{M}})\]

and

\[\msf{Alg}(\msf{Mod}^\mrm{fin}_{(\mfrk{g},K)})\rta \msf{Alg}(\msf{Mod}_{\Omega^\bullet_\bb{M}}).\]
\end{cor}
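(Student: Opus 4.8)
The plan is to invoke the elementary categorical principle that a lax symmetric monoidal functor sends algebra objects to algebra objects and morphisms of algebras to morphisms of algebras, hence descends to a functor on categories of algebra objects, and that if the functor is strong symmetric monoidal the induced functor is itself symmetric monoidal for the tensor-product monoidal structures on algebra objects. Since the preceding Proposition and its first Corollary establish that $\mrm{desc}_{(P,\nu)}$ and $\mathbf{desc}_{(P,\nu)}$ are symmetric monoidal functors into $\msf{VB}^\mrm{flat}_{/\bb{M}}$ and $\msf{Mod}_{\Omega^\bullet_\bb{M}}$ respectively, both claims follow once this principle is applied.

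In more detail, an algebra object $A$ of $\msf{Mod}^\mrm{fin}_{(\mfrk{g},K)}$ is a finite-dimensional $(\mfrk{g},K)$-module equipped with maps $m\colon A\otimes_\bb{k} A\rta A$ and $\eta\colon\bb{k}\rta A$ satisfying the associativity and unit axioms, where $\bb{k}$ is the unit module. Applying $\mrm{desc}_{(P,\nu)}$ and composing with the structure isomorphism $\mrm{desc}_{(P,\nu)}(A)\otimes\mrm{desc}_{(P,\nu)}(A)\xrta{\sim}\mrm{desc}_{(P,\nu)}(A\otimes_\bb{k} A)$ supplied by the Proposition, together with the unit isomorphism $\bb{k}\xrta{\sim}\mrm{desc}_{(P,\nu)}(\bb{k})$ furnished by the Corollary on the unit module (identifying it with the trivial line bundle with de Rham connection), one obtains a multiplication and unit making $\mrm{desc}_{(P,\nu)}(A)$ an algebra object of $\msf{VB}^\mrm{flat}_{/\bb{M}}$; the axioms transport because $\mrm{desc}_{(P,\nu)}$ is a functor and the coherence diagrams of a monoidal functor commute. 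A morphism of algebra objects is a morphism of underlying modules compatible with $m$ and $\eta$, and naturality of the structure isomorphisms shows its image is compatible with the descended multiplications and units, giving a functor $\msf{Alg}(\msf{Mod}^\mrm{fin}_{(\mfrk{g},K)})\rta\msf{Alg}(\msf{VB}^\mrm{flat}_{/\bb{M}})$. Running the identical argument with $\mathbf{desc}_{(P,\nu)}$ and $\msf{Mod}_{\Omega^\bullet_\bb{M}}$ in place of $\mrm{desc}_{(P,\nu)}$ and $\msf{VB}^\mrm{flat}_{/\bb{M}}$ yields the second functor.

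I do not expect any genuine obstacle; the only points meriting care are bookkeeping ones. First, one should record that $\msf{VB}^\mrm{flat}_{/\bb{M}}$ and $\msf{Mod}_{\Omega^\bullet_\bb{M}}$ are themselves symmetric monoidal, so that tensor products of algebra objects in them are again algebra objects and $\msf{Alg}(-)$ inherits a symmetric monoidal structure; then one checks that the structure isomorphisms of $\mrm{desc}_{(P,\nu)}$, being natural and coherent, are maps of algebra objects, which upgrades the induced functor on algebras to a symmetric monoidal one. I would close by remarking that the whole statement is an instance of the $2$-functoriality of $\mathcal{C}\mapsto\msf{Alg}(\mathcal{C})$ on the $2$-category of symmetric monoidal categories and strong symmetric monoidal functors, applied to the functors produced in the Proposition and its Corollaries, so that with those results in hand essentially nothing remains to prove.
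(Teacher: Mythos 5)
Your proposal is correct and follows exactly the paper's route: the paper's entire proof is the one-line observation that symmetric monoidal functors induce symmetric monoidal functors on categories of algebra objects, which is precisely the principle you invoke and then unpack. The additional detail you supply (transporting $m$ and $\eta$ through the structure isomorphisms, naturality for morphisms, the $2$-functoriality of $\msf{Alg}(-)$) is a faithful elaboration of that same argument rather than a different approach.
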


Note that an algebra object in $\Omega^\bullet_\bb{M}$-modules is just a $\Omega^\bullet_\bb{M}$-algebra. 

\begin{proof}
Symmetric monoidal functors induce symmetric monoidal functors on categories of algebra objects. 
\end{proof}

\begin{ex}
Take $(\mfrk{g},K)$ to be the sHC pair $\gK$. 
Then $\widehat{\mathcal{O}}_{2n\vert a,b}$ is an object in 
\[\msf{Alg}(\msf{Mod}_{\gK}),\] 
but the underlying $\gK$-module of $\widehat{\mathcal{O}}_{2n\vert a,b}$ is not finite-dimensional. 
However, $\widehat{\mathcal{O}}_{2n\vert a,b}$ is a limit of finite-dimensional modules, Definition \ref{def-formalnbhd}.
\end{ex}

To include the above example, we extend the descent functors to pro-objects. 
For $\mathcal{C}$ a category, let $\msf{Pro}(\mathcal{C})$ denote the category of pro-objects in $\mathcal{C}$. 
Note that if $\mathcal{C}$ is a symmetric monoidal category, then so is Pro(C), 
with tensor product given levelwise, see \cite[\S 4.2]{Davis-Lawson}.  
Since $\mrm{Pro}(-)$ is a functor between categories of categories, 
we obtain functors 

\[\msf{Pro}\left(\msf{Alg}(\msf{Mod}^\mrm{fin}_{(\mfrk{g},K)})\right)\rta \msf{Pro}\left(\msf{Alg}(\msf{VB}^\mrm{flat}_{/\bb{M}})\right)\]

and

\[\msf{Pro}\left(\msf{Alg}(\msf{Mod}^\mrm{fin}_{(\mfrk{g},K)})\right)\rta \msf{Pro}\left(\msf{Alg}_{\Omega^\bullet_\bb{M}}\right).\]

By definition, 
$\widehat{\mathcal{O}}_{2n\vert a,b}=\lim_i \mathcal{O}_{\bb{R}^{2n\vert r},0}/\mfrk{m}_0$ 
is a pro-object in algebras in $\msf{Mod}^\mrm{fin}_{\gK}$. 

\begin{ex}[Jet Bundles]
Given a vector bundle $E\rta\bb{M}$, 
the \emph{infinite jet bundle} $J^\infty(E)$ is a pro-object in $\msf{VB}_{/\bb{M}}$, see \cite[\S A.2]{GGJets}. 
Moreover, given a flat connection on $E$, 
$J^\infty(E)$ has a canonical flat connection \cite[Prop. A.8]{GGJets} so that 
$J^\infty(E)\in\msf{Pro}\left(\msf{VB}_{/\bb{M}}\right)^\mrm{flat}$. 
See also \cite[\S 2]{CFT}. 
\end{ex}

\begin{lem}\label{lem-DescO}
Let $((\bb{M},\omega),\sigma)\in\msf{sGK}^=_{2n\vert a,b}$. 
Then descending $\widehat{\mathcal{O}}_{2n\vert a,b}$ along $\Fr_\bb{M}$ 
produces the jet bundle of the trivial line bundle $\underline{k}_\bb{M}$ with its canonical flat connection,  

\[\mrm{desc}_{(\Fr_\bb{M},\sigma)}(\widehat{\mathcal{O}}_{2n\vert a,b})=J^\infty(\underline{\bb{k} }_\bb{M})\]

and thus 

\[\mathbf{desc}_{(\Fr_\bb{M},\sigma)}(\widehat{\mathcal{O}}_{2n\vert a,b})=\Omega^\bullet_\bb{M}.\]
\end{lem}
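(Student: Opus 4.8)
The plan is to unwind the Borel construction fiberwise, identify it with the infinite jet bundle, and then match the flat connections. First I would recall that by Proposition \ref{prop-gkfromExp}, the formal exponential $\sigma$ equips $\Fr_\bb{M}$ with a $\gK$-bundle structure, where the connection is $\tilde\sigma^*(\nu^\mrm{coor})$. The key point is that $\Fr_\bb{M}$ and $\bb{M}^\mrm{coor}$ have the same associated bundles for the $\mrm{Sp}(2n\vert a,b)$-action (since $\sigma$ is an $\mrm{Sp}(2n\vert a,b)$-equivariant lift), so it is equivalent, and cleaner, to compute $\mrm{desc}$ using $\bb{M}^\mrm{coor}$ with its tautological $\mrm{Aut}_{2n\vert a,b}$-action.

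The main computation is the fiberwise identification. Over a point $x\in\bb{M}$, a point of $\bb{M}^\mrm{coor}$ is an isomorphism of Poisson superalgebras $\varphi\colon\widehat{\mathcal{O}}_{\bb{M},x}\xrightarrow{\sim}\widehat{\mathcal{O}}_{2n\vert a,b}$. The associated-bundle fiber $(\bb{M}^\mrm{coor}\times_{\mrm{Aut}_{2n\vert a,b}}\widehat{\mathcal{O}}_{2n\vert a,b})_x$ consists of equivalence classes $[\varphi,f]$ with $f\in\widehat{\mathcal{O}}_{2n\vert a,b}$ and $[\varphi\circ\psi^{-1},\psi\cdot f]\sim[\varphi,f]$ for $\psi\in\mrm{Aut}_{2n\vert a,b}$; sending $[\varphi,f]\mapsto \varphi^{-1}(f)$ gives a well-defined isomorphism onto $\widehat{\mathcal{O}}_{\bb{M},x}$, which is precisely the fiber $J^\infty(\underline{\bb{k}}_\bb{M})_x$ of the infinite jet bundle of the trivial line bundle. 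One then checks this is an isomorphism of pro-vector bundles, not merely fiberwise: both sides are pro-vector bundles whose $i$-th level is built from $\mathcal{O}_{\bb{M},x}/\mfrk{m}_x^i$, and the identification respects the filtrations by construction, so it assembles into an isomorphism in $\msf{Pro}(\msf{VB}_{/\bb{M}})$.

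The part requiring the most care is matching the flat connections. On the descent side, the connection on $\mrm{desc}(\widehat{\mathcal{O}}_{2n\vert a,b})$ is $d_{\mrm{dR}}+\rho^{\widehat{\mathcal{O}}_{2n\vert a,b}}_{\mfrk{g}_{2n\vert a,b}}(\nu^\mrm{coor})$, where $\mfrk{g}_{2n\vert a,b}=\widehat{\mrm{Vect}}^\mrm{symp}_0$ acts on $\widehat{\mathcal{O}}_{2n\vert a,b}$ via the tautological action of symplectic formal vector fields on formal functions. On the jet-bundle side, the canonical flat connection of \cite[Prop. A.8]{GGJets} is characterized by the property that its horizontal sections are the jet prolongations of flat (here: locally constant) sections of $\underline{\bb{k}}_\bb{M}$. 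I would show the two connections agree by verifying that the above fiberwise isomorphism identifies horizontal sections: a horizontal section of $\mrm{desc}(\widehat{\mathcal{O}}_{2n\vert a,b})$ pulls back along $\bb{M}^\mrm{coor}\to\bb{M}$ to a map $\bb{M}^\mrm{coor}\to\widehat{\mathcal{O}}_{2n\vert a,b}$ killed by $d_{\mrm{dR}}+\rho(\nu^\mrm{coor})$; since $\nu^\mrm{coor}$ is the inverse of the Lie algebra map $\mfrk{g}_{2n\vert a,b}\to\mrm{Vect}(\bb{M}^\mrm{coor})$, this flatness equation says exactly that the section is the Taylor expansion, in the tautological formal coordinates, of a fixed locally constant function on $\bb{M}$ — i.e. a jet prolongation. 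This is the defining property of the jet connection, so the connections coincide. Functoriality in $((\bb{M},\omega),\sigma)$, hence the statement as a statement about objects of $\msf{sGK}^=_{2n\vert a,b}$, follows from Lemma \ref{lem-CoorFunctorial} and the compatibility of $\tilde\sigma$ with morphisms (the cube in the proof of Corollary \ref{cor-FrFunctor}).

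Finally, the second assertion $\mathbf{desc}_{(\Fr_\bb{M},\sigma)}(\widehat{\mathcal{O}}_{2n\vert a,b})=\Omega^\bullet_\bb{M}$ is immediate: $\mathbf{desc}$ is the de Rham complex of the flat bundle $\mrm{desc}$, and the de Rham complex of $J^\infty(\underline{\bb{k}}_\bb{M})$ with its canonical flat connection is $\Omega^\bullet_\bb{M}$ — equivalently, one invokes the already-established fact (the corollary computing $\mathbf{desc}_{(P,\nu)}(\bb{k})$ together with $\widehat{\mathcal{O}}_{2n\vert a,b}$ being a limit of modules whose descent de Rham complexes are $\Omega^\bullet_\bb{M}$) that the de Rham complex of the jet bundle of a flat bundle computes nothing new. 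I expect the main obstacle to be purely bookkeeping in the pro-category: ensuring the fiberwise isomorphism and the connection-matching argument are carried out level-by-level compatibly, so that everything genuinely assembles in $\msf{Pro}(\msf{VB}_{/\bb{M}})^\mrm{flat}$ rather than only after passing to fibers.
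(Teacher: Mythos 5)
Your overall strategy is the same as the paper's: identify the Borel construction fiberwise with jets of functions by composing a formal function with the (formal) coordinate system, and check that this assembles into an isomorphism of flat pro-bundles. Routing the computation through $\bb{M}^\mrm{coor}$ rather than $\Fr_\bb{M}$ is a legitimate and arguably cleaner variant (the paper's use of a linear frame $\phi$ to identify a neighborhood of $x$ with $\bb{R}^{2n\vert a+b}$ implicitly relies on $\tilde{\sigma}$ anyway), and your insistence on explicitly matching the connections goes beyond the paper's proof, which only constructs the bundle isomorphism and its inverse.

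However, the connection-matching step as you wrote it contains a genuine error: you characterize the horizontal sections of the canonical flat connection on $J^\infty(\underline{\bb{k}}_\bb{M})$ as jet prolongations of \emph{locally constant} functions, and correspondingly claim that a $(d_{\mrm{dR}}+\rho(\nu^{\mrm{coor}}))$-flat section of the descended bundle is the Taylor expansion of a fixed locally constant function. This is false, and would be fatal if true: the horizontal sections of the Grothendieck connection on $J^\infty(\underline{\bb{k}}_\bb{M})$ are the prolongations $j^\infty(f)$ of \emph{arbitrary} smooth functions $f\in\mathcal{O}_\bb{M}$ --- this is precisely why the lemma lets one conclude that $\widehat{\mathcal{O}}_{2n\vert a,b}$ descends to $\mathcal{O}_\bb{M}$, and why Theorem \ref{thm-SuperFedosov} produces a deformation of all of $\mathcal{O}_\bb{M}$ rather than of its locally constant functions. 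The flatness equation $d_{\mrm{dR}}s+\rho(\nu^{\mrm{coor}})(s)=0$ does not force $s$ to be constant along $\bb{M}$; it forces the variation of $s$ in a horizontal direction to be cancelled by the action of the corresponding formal vector field, and its solutions are exactly the assignments $(x,\varphi)\mapsto\varphi(f)$ for a fixed smooth $f$. With ``locally constant'' replaced by ``smooth'' throughout, your argument is correct and in fact more complete than the proof given in the paper.
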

\noindent In particular, using \cite[Prop. A.8]{GGJets}, taking zero sections we see that $\widehat{\mathcal{O}}_{2n\vert a,b}$ descends to $\mathcal{O}_{\bb{M}}$.

One should compare this Lemma to \cite[Prop. 2.20]{GGW} or \cite[Pg. 20]{BK}.

\begin{proof}
The second claim follows from the first, so it suffices to produce an isomorphism of flat pro-bundles.

The bundle obtained by descending $\widehat{\mathcal{O}}_{2n\vert a,b}$ is 

\[\mrm{desc}_{(\Fr_\bb{M},\sigma)}(\widehat{\mathcal{O}}_{2n\vert a,b})=\Fr_\bb{M}\times_{\mrm{Sp}(2n\vert a,b)}\widehat{\mathcal{O}}_{2n\vert a,b}.\]

A point in the right-hand side is an equivalence class of 
a point $(x,\phi)$ in the frame bundle and a function $\hat{f}$ on the formal disk. 
The frame $\phi$ determines an isomorphism between a neighborhood $U_x$ of $x$ in $\bb{M}$ and the space $\bb{R}^{2n\vert a+b}$. 
Composing $\hat{f}$ and $\phi$, we obtain a germ of a function on $U_x$ at $x$; 
that is, an element $\hat{f}_\phi$ of the completion $(\widehat{\mathcal{O}}_{U_x})_x$. 
Up to reparameterizations of $U_x$ by elements of the group $\mrm{Sp}(2n\vert a,b)$, 
the completed ring $(\widehat{\mathcal{O}}_{U_x})_x$ is the stalk of the infinite jet bundle $J^\infty(\underline{\bb{k}}_\bb{M})$.

The assignment $\left((x,\phi),\hat{f}\right)\mapsto \hat{f}_\phi$ therefore determines a map of bundles 
\[\Fr_\bb{M}\times_{\mrm{Sp}(2n\vert a,b)}\widehat{\mathcal{O}}_{2n\vert a,b}\rta J^\infty(\underline{\bb{k}}_\bb{M}).\] 
One constructs an inverse to this map by sending a germ of a function $\hat{g}$ at $x\in\bb{M}$ to a neighborhood $V_x$ on which $\hat{g}$ is defined. 

 \end{proof}

\begin{defn}
Let $\mathcal{V}$ be a symmetric monoidal category. 
Let $A$ be a pro-object in~$\msf{Alg}(\mathcal{V})$. 
An \emph{$A$-module} is an object $N\in\msf{Pro}(\mathcal{V})$ 
together with a map $A\otimes N\rta N$ of pro-objects. 
A \emph{morphism of $A$-modules} is a morphism of pro-objects respecting the action map. 
We let $\msf{Mod}_A(\msf{Pro}(\mathcal{V}))$ denote the category of $A$-modules in $\mathcal{V}$. 
\end{defn}

One can define free, and finitely-generated modules over a pro-object in $\msf{Alg}(\mathcal{V})$ as in the ordinary case. 
Note that the underlying object of $A$ is in $\msf{Pro}(\mathcal{V})$. 

\begin{cor}
Let $A\in\msf{Pro}\left(\msf{Alg}(\msf{Mod}^\mrm{fin}_{(\mfrk{g},K)})\right)$. 
The descent functors induce symmetric monoidal functors

\[\msf{Mod}_A(\msf{Pro}(\msf{Mod}^\mrm{fin}_{(\mfrk{g},K)}))\rta \msf{Mod}_{\mrm{desc}_{(P,\nu)}(A)}(\msf{Pro}(\msf{VB})^\mrm{flat})\]

and

\[\msf{Mod}_A(\msf{Pro}(\msf{Mod}^\mrm{fin}_{(\mfrk{g},K)}))\rta \msf{Mod}_{\mathbf{desc}_{(P,\nu)}(A)}(\msf{Pro}(\msf{Mod}_{\Omega^\bullet_\bb{M}})).\]

\end{cor}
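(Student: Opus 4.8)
The plan is to deduce this corollary formally from the symmetric monoidal structure of the descent functors already established, using the general principle that a symmetric monoidal functor carries module objects over an algebra object to module objects over the image algebra. First I would recall that for any symmetric monoidal functor $F\colon\mathcal{V}\to\mathcal{W}$ and any algebra object $A\in\msf{Alg}(\mathcal{V})$, there is an induced functor $\msf{Mod}_A(\mathcal{V})\to\msf{Mod}_{F(A)}(\mathcal{W})$: an action map $A\otimes N\to N$ is sent to $F(A\otimes N)\simeq F(A)\otimes F(N)\to F(N)$, and the module axioms are preserved because $F$ respects the associativity and unit constraints. This functor is itself lax symmetric monoidal (indeed symmetric monoidal, since the relative tensor product over $A$ is computed as a coequalizer of tensor products in $\mathcal{V}$, and $F$ being symmetric monoidal and, in our setting, preserving the relevant colimits, commutes with this construction).

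The next step is to promote this to pro-objects. Since $\msf{Pro}(-)$ is functorial on symmetric monoidal categories with levelwise tensor product (as cited from \cite[\S 4.2]{Davis-Lawson}), the symmetric monoidal functors $\mrm{desc}_{(P,\nu)}\colon\msf{Mod}^\mrm{fin}_{(\mfrk{g},K)}\to\msf{VB}^\mrm{flat}_{/\bb{M}}$ and $\mathbf{desc}_{(P,\nu)}\colon\msf{Mod}^\mrm{fin}_{(\mfrk{g},K)}\to\msf{Mod}_{\Omega^\bullet_\bb{M}}$ induce symmetric monoidal functors on pro-categories, $\msf{Pro}(\msf{Mod}^\mrm{fin}_{(\mfrk{g},K)})\to\msf{Pro}(\msf{VB}_{/\bb{M}})^\mrm{flat}$ and $\to\msf{Pro}(\msf{Mod}_{\Omega^\bullet_\bb{M}})$. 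Then I apply the module-transport principle from the previous paragraph, now internal to these pro-categories: given $A\in\msf{Pro}(\msf{Alg}(\msf{Mod}^\mrm{fin}_{(\mfrk{g},K)}))$, its image is a pro-algebra, and an $A$-module $N\in\msf{Pro}(\msf{Mod}^\mrm{fin}_{(\mfrk{g},K)})$ with action $A\otimes N\to N$ of pro-objects is sent by the (pro-)symmetric monoidal descent functor to $\mrm{desc}(A)\otimes\mrm{desc}(N)\to\mrm{desc}(N)$, using that descent on pro-objects is symmetric monoidal to identify $\mrm{desc}(A\otimes N)\simeq\mrm{desc}(A)\otimes\mrm{desc}(N)$. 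This is precisely an object of $\msf{Mod}_{\mrm{desc}_{(P,\nu)}(A)}(\msf{Pro}(\msf{VB})^\mrm{flat})$, and similarly for the $\Omega^\bullet_\bb{M}$-version; morphisms of $A$-modules go to morphisms of $\mrm{desc}(A)$-modules by the same naturality.

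The main subtlety — the one step I would be careful to spell out rather than wave at — is the interaction of the symmetric monoidal structure with the \emph{relative} tensor product defining $\msf{Mod}_A$, and more precisely the compatibility of levelwise pro-limits with the Borel construction and the connection-formula $\nabla^{P,V}=d_{\mrm{dR},P}+\rho_\mfrk{g}^V(\nu)$. Concretely, one must check that for a pro-algebra $A=\{A_i\}$ and pro-module $N=\{N_j\}$, the descended action map is compatible with the transition maps on both sides, i.e.\ that $\mrm{desc}_{(P,\nu)}$ applied levelwise genuinely assembles to a map of pro-objects $\mrm{desc}(A)\otimes\mrm{desc}(N)\to\mrm{desc}(N)$; this is where one uses that the Borel construction and the differential $\rho_\mfrk{g}^V(\nu)$ are functorial in $V$, so the connection on each level is compatible under the pro-structure maps. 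Granting that (which is immediate from the explicit formulas, as in the proof of the preceding proposition), the rest is formal, and the functor is symmetric monoidal because relative tensor products $N\otimes_A N'$ are built from $\otimes$ and colimits that the descent functor preserves. I would therefore present the proof as: invoke the symmetric monoidality of $\mrm{desc}_{(P,\nu)}$ and $\mathbf{desc}_{(P,\nu)}$; pass to pro-categories using functoriality of $\msf{Pro}(-)$; transport modules along a symmetric monoidal functor; and check the pro-compatibility of action maps from the explicit connection formula.
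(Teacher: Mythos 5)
Your proof is correct and follows the same route the paper intends: the paper states this corollary without proof, treating it as a formal consequence of the symmetric monoidality of $\mrm{desc}_{(P,\nu)}$ and $\mathbf{desc}_{(P,\nu)}$, the functoriality of $\msf{Pro}(-)$ with levelwise tensor product, and the standard transport of module objects along a symmetric monoidal functor. Your additional care about levelwise compatibility of the action maps and the relative tensor product only makes explicit what the paper leaves implicit.
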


We can forget down 
\[ \msf{Mod}_{\mrm{desc}_{(P,\nu)}(A)}(\msf{Pro}(\msf{Mod}_{\Omega^\bullet_\bb{M}}))\rta \msf{Pro}(\msf{Mod}_{\Omega^\bullet_\bb{M}}).\]
However, the resulting functor 
\[\msf{Mod}_A(\msf{Pro}(\msf{Mod}^\mrm{fin}_{(\mfrk{g},K)}))\rta\msf{Pro}(\msf{Mod}_{\Omega^\bullet_\bb{M}})\]
is only lax-symmetric monoidal. 
The reader should compare this with \cite[Lem. 2.18 and 2.19]{GGW}.

\begin{ex}\label{ex-AisO}
Take $(\mfrk{g},K)$ to be the sHC pair $\gK$ 
and $A$ to be $\widehat{\mathcal{O}}_{2n\vert a,b}$. 
Then we have lax-monoidal functors 

\[\msf{Mod}_{\widehat{\mathcal{O}}_{2n\vert a,b}}(\msf{Mod}_{\gK})\rta\msf{Pro}(\msf{VB}_{/\bb{M}})^\mrm{flat}\]

and

\[\msf{Mod}_{\widehat{\mathcal{O}}_{2n\vert a,b}}(\msf{Mod}_{\gK})\rta\msf{Mod}_{\Omega^\bullet_\bb{M}}.\]

\end{ex}

\begin{defn}\label{def-sGKDescent}
The \emph{super-Gelfand-Kazhdan descent functors} are the functors obtained from Example \ref{ex-AisO} by varying $(P,\nu)$ over $\msf{sGK}^{=}_{2n\vert a,b}$, 

\[\mrm{desc}^\mrm{sGK}\colon \msf{sGK}^=_{2n\vert a,b}\times \msf{Mod}_{\widehat{\mathcal{O}}_{2n\vert a,b}}(\msf{Mod}_{\gK})\rta\msf{Pro}(\msf{VB})^\mrm{flat}\]

and

\[\mathbf{desc}^\mrm{sGK}\colon(\msf{sGK}^=_{2n\vert a,b})^\mrm{op}\times \msf{Mod}_{\widehat{\mathcal{O}}_{2n\vert a,b}}(\msf{Mod}_{\gK})\rta\msf{Mod}_{\Omega^\bullet_\bb{M}}.\]

For $((\bb{M},\omega),\sigma)\in \msf{sGK}_{2n\vert a,b}$, let $\mathbf{desc}_{\bb{M},\sigma}$ denote the resulting functor between module categories.
\end{defn}

\section{Deformation Quantization Descends}\label{sec-DefQuant}

We would like to produce a deformation quantization for symplectic supermanifolds using super-Gelfand-Kazhdan descent. 
In this section, we explain what we mean by deformation quantization, 
and then show how the functor 
$\mathbf{desc}^{\mrm{sGK}}$ of Definition \ref{def-sGKDescent} 
interacts with this process. 

\begin{defn}
Let $A$ be a supercommutative $\bb{k}$-superalgebra. 
A \emph{deformation} of $A$ is an associative $\bb{k}[[\hbar]]$-superalgebra $A_\hbar$ 
together with an isomorphism $A_\hbar/\hbar\simeq A$.
\end{defn}

The commutative algebra we would like to deform is $\mathcal{O}_\bb{M}$ for $\bb{M}$ a symplectic supermanifold. 
By Lemma \ref{lem-superPoisson}, $\mathcal{O}_\bb{M}$ has a Poisson superalgebra structure. 
We would like to consider deformations of $\mathcal{O}_\bb{M}$ that take into account this structure; 
that is, deformations of $\mathcal{O}_\bb{M}$ as a Poisson superalgebra. 
Historically this is done by asking for a deformation $A_\hbar$ of $\mathcal{O}_M$ whose associative product looks like 
\[f\star g=fg+\hbar B_1(f,g)+\hbar^2 B_2(f,g)+\cdots\]
where the $B_i(-,-)$ are bilinear differential operators. 
Since the descent functor $\mathbf{desc}^\mrm{sGK}$ lands in modules over a dg algebra, 
we would like a way to consider Poisson superalgebra in the differential graded setting. 
To do this, and to study deformations quantizations of Poisson dg superalgebras, 
we will use the language of operads. 

\begin{rmk}
We describe a rather general version of deformation of Poisson dg superalgebras below. 
We will only use the special case of $k=1$ to prove our main result Theorem \ref{thm-1}. 
The shifted cases when $k\neq 1$ are of interest for field theories over manifolds of dimension $k\neq 1$. 
The interaction between super-Gelfand-Kazhdan descent and deformation quantization holds in this larger generality, see Lemma \ref{lem-DefQuantDescends}.
\end{rmk}

The following is the super-version of \cite[Def. 2.2.1]{CG1} which can also be found in \cite[Def. 1.1]{CFL}.

\begin{defn}
A \emph{ $\mathcal{P}_k$-algebra in $\msf{Ch}_\bb{k}$} is a cochain complex $A$ of super vector spaces with 

\begin{itemize}

\item a supercommutative product $A\otimes A\rta A$ of degree 0 and 

\item a Lie bracket 
\[\{-,-\}\colon A[k-1]\otimes A[k-1]\rta A[k-1]\]
so that, for every $a\in A$, the map $\{a,-\}$ is a graded superderivation.

\end{itemize}

\end{defn}

See \cite[Def. 2.9]{Sinha} for a construction of the operad $\mathcal{P}_k$ in terms of trees.

\begin{ex}
When $k=1$, a $\mathcal{P}_k$-algebra in $\msf{Ch}_\bb{k}$ is what one might call a  Poisson dg algebra. 
In particular, there is no shift in the bracket.
\end{ex}

\begin{rmk}
For $k\geq 2$, there is an equivalence of operads 
$\mathcal{P}_k\simeq H_\bullet(\mathcal{E}_k)$, 
between the $k$-shifted Poisson operad and the homology of the little $k$-disks operad. 
By formality of the operad $\mathcal{E}_k$ \cite{Cohen}, we have that 
$\mathcal{P}_k$-algebras in chain complexes over a field of characteristic zero are equivalent to algebras over the little $k$-disks operad $\mathcal{E}_k$. 
See for example \cite[Thm. 4.9]{Sinha}. 
\end{rmk}

Next we describe the type of structure a deformation quantization of a $\mathcal{P}_k$-algebra should have. 
The following is \cite[Def. 5.3]{ValerioSafronov}.

\begin{defn}
A \emph{$\mathcal{BD}_1$-algebra in $\msf{Ch}_{\bb{k}[[\hbar]]}$} is a cochain complex $R$ with 

\begin{itemize}

\item an associative multiplication on $R$, and

\item a Lie bracket on $R$, 
\[\{-,-\}\colon R\otimes R\rta R\]
so that, for every $a\in A$, the map $\{a,-\}$ is a graded superderivation, and 
\begin{align}\label{eq-supercommutator}
\hbar\{x,y\}=[x,y]
\end{align}
where $[x,y]$ is the graded supercommutator.
\end{itemize}

\end{defn}

The structure of a $\mathcal{BD}_1$-algebra on a cochain complex $R$ induces a $\mathcal{P}_1$-algebra structure on $R/\hbar$. 
This follows from Equation (\ref{eq-supercommutator}).
One can use this to define an equivalence of operads 
$\mathcal{BD}_1/\hbar\simeq\mathcal{P}_1$.

\begin{rmk}
More generally, one can define an operad $\mathcal{BD}_k$ for $k\geq 2$ 
to be the graded operad obtained from the Rees construction with respect to the Postnikov filtration on $\mathcal{E}_k$, see \cite[\S 5.1]{ValerioSafronov}. 
One then has an equivalence of operads 
\[\mathcal{BD}_k/\hbar\simeq  \mathcal{P}_k\]
This follows, for example, from \cite[Thm. 5.5]{ValerioSafronov}.
\end{rmk}

\begin{defn}
Let $A$ be a $\mathcal{P}_k$-algebra in $\msf{Ch}_\bb{k}$. 
A \emph{$\mathcal{BD}_1$-deformation} of $A$ is a $\mathcal{BD}_k$-algebra $A_\hbar$, 
together with an equivalence of $\mathcal{P}_k$-algebras $A_\hbar/\hbar\simeq A$.
\end{defn}

\begin{lem}\label{lem-DefQuantDescends}
Let $F\colon\mathcal{C}\rta\mathcal{D}$ be a lax symmetric monoidal functor between symmetric monoidal categories tensored over $\bb{k}[[\hbar]]$. 
Let $\mathcal{C}_{\hbar=0}$ and $\mathcal{D}_{\hbar=0}$ denote the corresponding categories tensored over $\bb{k}$. 
Then $F$ induces functors on algebra categories commuting with the quotient map $\bb{k}[[\hbar]]\rta\bb{k}$, 

\[\begin{xymatrix}
{
\msf{Alg}_{\mathcal{BD}_k}(\mathcal{C})\arw[r]^-F\arw[d]^{\hbar=0} & \msf{Alg}_{\mathcal{BD}_k}(\mathcal{D})\arw[d]^{\hbar=0}\\
\msf{Alg}_{\mathcal{P}_k}(\mathcal{C}_{\hbar=0})\arw[r]_-F & \msf{Alg}_{\mathcal{P}_k}(\mathcal{D}_{\hbar=0}).
}
\end{xymatrix}\]
\end{lem}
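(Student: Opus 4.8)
The plan is to deduce everything from the general principle that a lax symmetric monoidal functor $F\colon\mathcal{C}\to\mathcal{D}$ induces a functor $\msf{Alg}_\mathcal{P}(\mathcal{C})\to\msf{Alg}_\mathcal{P}(\mathcal{D})$ for any operad $\mathcal{P}$, applied to $\mathcal{P}=\mathcal{BD}_k$ and $\mathcal{P}=\mathcal{P}_k$. First I would recall that an algebra over an operad $\mathcal{P}$ in a symmetric monoidal category $\mathcal{V}$ is an object $X$ together with structure maps $\mathcal{P}(n)\otimes_{\Sigma_n} X^{\otimes n}\to X$ satisfying the usual axioms; applying a lax symmetric monoidal $F$ to the maps $F(X)^{\otimes n}\to F(X^{\otimes n})$ and then $F$ of the structure maps produces structure maps on $F(X)$, and the coherence of the lax structure guarantees the operad axioms are preserved. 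This gives the horizontal arrows in the square, one for $\mathcal{C}$ with $\mathcal{BD}_k$ and one for $\mathcal{C}_{\hbar=0}$ with $\mathcal{P}_k$.

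Next I would address the vertical arrows. The hypothesis is that $\mathcal{C}$ is tensored over $\bb{k}[[\hbar]]$ and $\mathcal{C}_{\hbar=0}$ is the associated category tensored over $\bb{k}$, obtained by base change $-\otimes_{\bb{k}[[\hbar]]}\bb{k}$; this base-change functor $\mathcal{C}\to\mathcal{C}_{\hbar=0}$ is symmetric monoidal (even strongly, as a symmetric monoidal localization), and likewise for $\mathcal{D}\to\mathcal{D}_{\hbar=0}$. By the same operadic principle, $-\otimes_{\bb{k}[[\hbar]]}\bb{k}$ carries $\mathcal{BD}_k$-algebras to $\mathcal{BD}_k/\hbar$-algebras, and by the equivalence of operads $\mathcal{BD}_k/\hbar\simeq\mathcal{P}_k$ recalled in the text (for $k=1$ from Equation~\eqref{eq-supercommutator}, and in general from \cite[Thm. 5.5]{ValerioSafronov}), this is the same as a $\mathcal{P}_k$-algebra structure. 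This defines the two vertical $\hbar=0$ functors.

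Finally I would check commutativity of the square. Since all four functors are induced, at the level of underlying objects, from $F$ and from base change along $\bb{k}[[\hbar]]\to\bb{k}$, and since these two operations commute up to canonical natural isomorphism ($F$ being $\bb{k}[[\hbar]]$-linear means $F(-\otimes_{\bb{k}[[\hbar]]}\bb{k})\simeq F(-)\otimes_{\bb{k}[[\hbar]]}\bb{k}$), the square commutes on underlying objects; one then verifies that the two induced $\mathcal{P}_k$-algebra structures on $F(A_\hbar)\otimes_{\bb{k}[[\hbar]]}\bb{k}$ agree, which is a diagram chase in the coherence isomorphisms for the lax structure of $F$ and the monoidal structure of base change. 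The main obstacle is purely bookkeeping: making precise the sense in which $\mathcal{C}_{\hbar=0}$ is "the corresponding category" so that base change is symmetric monoidal and $\bb{k}[[\hbar]]$-linearity of $F$ gives the interchange isomorphism; once that setup is fixed, no genuine computation remains, and in particular the operadic equivalence $\mathcal{BD}_k/\hbar\simeq\mathcal{P}_k$ is quoted rather than reproved.
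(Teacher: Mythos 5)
Your proposal is correct and its core step --- that a lax symmetric monoidal functor induces a functor on categories of algebras over any operad, applied to $\mathcal{BD}_k$ and $\mathcal{P}_k$ --- is exactly the paper's proof, which consists only of your first paragraph. The paper leaves the vertical $\hbar=0$ functors and the commutativity of the square implicit, so your additional discussion of base change along $\bb{k}[[\hbar]]\rta\bb{k}$ and the interchange isomorphism is more careful than the published argument, not a departure from it.
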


\begin{proof}
Lax symmetric monoidal functors induce maps on algebra objects, 
given, for example, by 
\[F(R)\otimes F(R)\rta F(R\otimes R)\xrta{F(m)} F(R)\]
where the first arrow is the lax monoidal structure, 
and $m\colon R\otimes R\rta R$ is a multiplication. 

More generally, given an operation $R^{\otimes j}\rta R$, 
the lax structure gives a corresponding operation $F(R)^{\otimes j}\rta F(R)$. 
Thus, lax symmetric monoidal functors induce functors between categories of algebras over operads. 
\end{proof}

\subsubsection{Star Products}

When $k=1$ and our Poisson algebra comes to us as functions on a symplectic supermanifold 
$(\mathcal{O}_{\bb{M}},\{-,-\}_\omega)$, 
we would like our deformations to have an additional property involving the smooth structure on $\bb{M}$. 

\begin{defn}
Let $(\bb{M},\omega)$ be a symplectic supermanifold. 
A \emph{deformation quantization} of $\bb{M}$ is 
a $\mathcal{BD}_1$-deformation $A_\hbar$ of $(\mathcal{O}_{\bb{M}},\{-,-\}_\omega)$ with a $\bb{k}[[\hbar]]$-module isomorphism $A_\hbar\simeq A[[\hbar]]$ 
so that the associative product $\star$ on $f,g\in A_\hbar$ is of the form 

\[f\star g=fg+\hbar B_1(f,g)+\hbar^2B_2(f,g)+\cdots\] 

where the $B_i(-,-)$ are bilinear differential operators on $\bb{M}$.
\end{defn}

\noindent Such a product on $A_\hbar\simeq\mathcal{O}_\bb{M}[[\hbar]]$ is called a \emph{star product}. 

Since the super-Gelfand-Kazhdan descent functor starts from information over the \emph{formal} disk, 
which is not a manifold, 
it does not make sense to ask if star products descend. 
The $\mathcal{BD}_1$-deformation we construct locally will have an obvious form that descends to differential operators globally, 
see the proof of Theorem \ref{thm-SuperFedosov}.

\section{Super-Fedosov Quantization}\label{sec-SuperFedosov}

We would like to prove a super-analogue of Fedosov quantization. 
Recall that Fedosov quantization is the production of a canonical deformation quantization $\cal{A}_D(M)$ of $\cal{O}_M$ 
given a symplectic manifold $M$ together with a sympelctic connection $D$. 
In this section, we will show that given a formal exponential 
$\sigma\in\Gamma(\bb{M},\mrm{Exp}(\bb{M}))$, one can construct a canonical deformation $\mathcal{A}_\sigma(\bb{M})$ of $\mathcal{O}_\bb{M}$ using super-Gelfand-Kazhdan descent. 
See Lemma \ref{lem-ExpConnection} for the relation between a formal exponential on $\bb{M}$ and the data of a super-symplectic connection.

In other words, 
for $\sigma$ a formal exponential on $\bb{M}$, we have an associative algebra
$\cal{A}_\sigma(\bb{M})$ with an isomorphism of Poisson algebras 
\[\cal{A}_\sigma(\bb{M})/\hbar \cong\cal{O}_\bb{M}.\]
We will construct $\cal{A}_\sigma(\bb{M})$ locally over the formal disk, 
and then use the descent construction from Definition \ref{def-sGKDescent}. 
By Lemma \ref{lem-DefQuantDescends}, the descent of a $\mathcal{BD}_1$-deformation is a $\mathcal{BD}_1$-deformation of the descended algebra.

\begin{rmk}
By Lemma \ref{lem-FiberContractible}, the space $\Gamma(\bb{M},\mrm{Exp}(\bb{M}))$ is contractible. 
We therefore obtain an essentially unique deformation quantization of $(\bb{M},\omega)$. 
\end{rmk} 

For motivation, we remind the reader of how this works in the non-super case.

\begin{construction}[Local Fedosov Quantization]\label{con-LocalFedosovQuant}

\noindent We would like to deform
\[\widehat{\cal{O}}_{2n}=\bb{R}[[p_1,\dots,p_n,q_1,\dots,q_n]]\]
using the local symplectic manifold $(\bb{R}^{2n},\omega_0)$ 
where $\omega_0$ is
\[\omega_0=\sum_{i=1}^ndp_i\wedge dq_i.\]
In matrix form,
\[\omega_0(\zeta,\zeta')=-\langle \Omega \zeta,\zeta'\rangle=\zeta^T\Omega \zeta'\]
where 
\[\Omega =\begin{bmatrix}
0 & \mrm{Id}_n\\
-\mrm{Id}_n & 0
\end{bmatrix}.\]
The Poisson bracket on $\widehat{\cal{O}}_{2n}$ is 
\[\{f,g\}=-(\nabla f)^T\Omega (\nabla g)=\sum_{i=1}^n\frac{\del f}{\del p_i}\frac{\del g}{\del q_i}-\frac{\del f}{\del q_i}\frac{\del g}{\del p_i}.\]
From the Poisson bracket, we can abstract a bivector
\[\alpha=\sum_{i=1}^n \frac{\del}{\del p_i}\otimes\frac{\del}{\del q_i}-\frac{\del}{\del q_i}\otimes\frac{\del}{\del p_i}.\]
The deformation of $\widehat{\cal{O}}_{2n}$ has underlying vector space

\[\widehat{\cal{A}}_{2n}=\bb{R}[[p_1,\dots,p_n,q_1,\dots,q_n,\hbar]]\]

with product given by
\[f\star g=m\left(\mrm{exp}\left(\frac{\hbar}{2}\alpha\right)(f\otimes g)\right).\]
Here, $m$ is multiplication of power series 
$\widehat{\cal{A}}_{2n}\otimes\widehat{\cal{A}}_{2n}\rta\widehat{\cal{A}}_{2n}$.
On generators, the product is given by
\[p_i\star q_j=\frac{\hbar}{2} \delta_{ij}\]
and 
\[q_i\star p_i=-\frac{\hbar}{2}\delta_{ij}\]
with the rest of the products being zero.
The algebra $\widehat{\cal{A}}_{2n}$ is sometimes called the \emph{Weyl algebra}. 

\end{construction}

\begin{construction}[Local Super-Fedosov Quantization]\label{con-LocalSuperFedosovQuant}

\noindent We would like to replicate the above construction in the super context.
Thus, we want to deform the Poisson superalgebra $\widehat{\cal{O}}_{2n\vert a,b}$, 
whose underlying superalgebra is
\[\widehat{\mrm{Sym}}(p_1,\dots,p_n,q_1,\dots,q_n,\theta_1,\dots,\theta_r)\]
where $|p_i|=|q_i|=0$ and $|\theta_i|=1$. 
Our deformation will be constructed using the local picture of the symplectic supermanifold
$(\bb{R}^{2n\vert r},\omega_Q)$ 
from Example \ref{ex-LocalSymplecticStructure} where $Q$ has signature $(a,b)$. 
Here, $Q$ is a symmetric, nondegenerate bilinear form, with corresponding matrix $(g^{ij})$, 
and $\omega_Q$ is 
\[\omega_Q=\sum_{i=1}^ndp_i\wedge dq_i+\sum_{i,j=1}^r g^{ij} d\theta_i\otimes d\theta_j.\]
In matrix form,
\[\omega_Q(\xi,\xi')=\langle H_Q\xi,\xi'\rangle=-\xi^{sT}H_Q\xi'\]
where 
\[H_Q=\begin{bmatrix}
\Omega  & 0\\
0 & G
\end{bmatrix}\]
and $G=(g^{ij})$. 
The Poisson bracket on $\widehat{\cal{O}}_{2n\vert a,b}$ is given by
\[\{f,g\}=-(\nabla f)^{sT}H_Q(\nabla g),\]
we get an associated bivector

\[\tilde{\alpha}=\sum_{i=1}^n\frac{\del}{\del p_i}\otimes\frac{\del}{\del q_i}-\frac{\del }{\del q_i}\otimes\frac{\del}{\del p_i}-\sum_{i=1}^r g^{ij}\left(\frac{\del}{\del \theta_i}\otimes\frac{\del}{\del \theta_j}\right).\]

\end{construction}

Using the same idea as in the ordinary case, we make the following definition:

\begin{defn}\label{def-AQ}
Let $\AQ$ be the superalgebra with underlying super vector space 

\[\AQ=\widehat{\mrm{Sym}}(p_1,\dots,p_n,q_1,\dots,q_n,\theta_1,\dots,\theta_r,\hbar)\]

where $p_i,q_i,\hbar$ are even and $\theta_i$ are odd,
and with product
\[f\star g=m\left(\mrm{exp}\left(\frac{\hbar}{2}\tilde{\alpha}\right)(f\otimes g)\right).\]
\end{defn} 

On generators, the product is given by
\begin{align*}
p_i\star q_j&=\frac{\hbar}{2} \delta_{ij}\\
q_i\star p_j&=-\frac{\hbar}{2}\delta_{ij}\\
\theta_i\star \theta_j&=\theta_i\theta_j-\frac{\hbar}{2}g^{ij} . 
\end{align*}

\begin{prop}\label{prop-AQisBD}
The superalgebra $\AQ$ is a $\mathcal{BD}_1$-deformation of the Poisson superalgebra~$\widehat{\mathcal{O}}_{2n\vert a,b}$. 
\end{prop}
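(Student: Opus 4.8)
The plan is to verify the three defining features of a $\mathcal{BD}_1$-deformation directly from Definition \ref{def-AQ}: associativity of $\star$, the derivation property of the bracket $\{x,y\} := \frac{1}{\hbar}[x,y]_\star$, and the identification $\AQ/\hbar \simeq \widehat{\mathcal{O}}_{2n\vert a,b}$ as $\mathcal{P}_1$-algebras. The cleanest route for associativity is to recognize $\AQ$ as a (completed, graded) tensor product $\mathrm{Weyl}_{2n}[[\hbar]] \,\widehat\otimes\, \mathrm{Cliff}(\bb{R}^r, \hbar Q)$, where the first factor is the Moyal--Weyl algebra from Construction \ref{con-LocalFedosovQuant} (whose associativity is classical, e.g.\ because it is the Rees algebra of differential operators on $\bb{R}^n$, or by a direct check that the Moyal bivector $\tilde\alpha$ has vanishing Schouten bracket with itself and the exponential formula then gives an associative product), and the second is a Clifford algebra on a free module over $\bb{k}[[\hbar]]$ with the scaled quadratic form; associativity for Clifford algebras is standard. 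Since $\tilde\alpha$ splits as a sum $\tilde\alpha_{\mathrm{ev}} + \tilde\alpha_{\mathrm{odd}}$ acting on disjoint sets of variables, $\mathrm{exp}(\tfrac{\hbar}{2}\tilde\alpha) = \mathrm{exp}(\tfrac{\hbar}{2}\tilde\alpha_{\mathrm{ev}})\,\mathrm{exp}(\tfrac{\hbar}{2}\tilde\alpha_{\mathrm{odd}})$, and the $\star$-product is the tensor product of the two factor products; associativity of a tensor product of associative algebras is immediate.

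Next, for the $\mathcal{BD}_1$ structure one sets $\{x,y\} := \tfrac{1}{\hbar}(x\star y - (-1)^{|x||y|} y\star x)$ and must check this lands in $\AQ$ (not just $\AQ[\hbar^{-1}]$), is a Lie superbracket, and that each $\{a,-\}$ is a graded superderivation for $\star$. Landing in $\AQ$ follows because the symmetric part of $\mathrm{exp}(\tfrac{\hbar}{2}\tilde\alpha)$ cancels in the supercommutator, leaving $\hbar$ times a power series in $\hbar$; concretely on generators one reads off $\{p_i,q_j\} = \delta_{ij}$, $\{\theta_i,\theta_j\} = -g^{ij}$, matching the Poisson structure of Example \ref{ex-FormalPoisson} up to the sign conventions there. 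The Jacobi identity and the graded Leibniz rule for $\{a,-\}$ are then formal consequences of associativity of $\star$: in any associative superalgebra over $\bb{k}[[\hbar]]$, the rescaled supercommutator $\tfrac1\hbar[-,-]$ satisfies the graded Jacobi identity and $\tfrac1\hbar[a,bc] = \tfrac1\hbar[a,b]\,c + (-1)^{|a||b|} b\,\tfrac1\hbar[a,c]$ identically, so no separate computation is needed beyond confirming the bracket is integral. Finally, $\AQ/\hbar$ has underlying super vector space $\widehat{\mathrm{Sym}}(p_i,q_i,\theta_j)$ with product $f\star g \bmod \hbar = m(f\otimes g) = fg$, i.e.\ the commutative product, and induced bracket $\{-,-\} \bmod \hbar$ equal to the first-order term of $\tfrac1\hbar[-,-]_\star$, which is exactly the bivector $\tilde\alpha$ contracted against $df, dg$ — this is the Poisson superbracket of $\widehat{\mathcal{O}}_{2n\vert a,b}$ by Construction \ref{con-LocalSuperFedosovQuant}. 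Compatibility with the $\widehat{\mathcal{O}}_{2n\vert a,b}$-module and $\gK$-module structures is automatic since all the relevant maps are built $K$-equivariantly from the bivector $\tilde\alpha$, which is $\mrm{Sp}(2n\vert a,b)$-invariant by construction.

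The main obstacle is the bookkeeping in the odd/Clifford factor: one must be careful that the "bivector exponential" formula $f\star g = m(\mathrm{exp}(\tfrac\hbar2\tilde\alpha)(f\otimes g))$ actually reproduces the Clifford relations $\theta_i\star\theta_j + \theta_j\star\theta_i = -\hbar g^{ij}$ with the correct signs — the operator $\tfrac{\partial}{\partial\theta_i}\otimes\tfrac{\partial}{\partial\theta_j}$ is odd$\,\otimes\,$odd and Koszul signs enter both in how it acts on $f\otimes g$ and in how $m$ reassembles the result — and that the exponential series terminates appropriately on the finitely many odd generators so that $\AQ$ is genuinely $\widehat{\mathrm{Sym}}(\ldots)[[\hbar]]$ as a $\bb{k}[[\hbar]]$-module. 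Once the generator-level products are pinned down (and they are recorded right after Definition \ref{def-AQ}), associativity can if desired be re-derived purely from those relations via a normal-ordering/PBW argument for the Weyl--Clifford algebra, sidestepping any delicate manipulation of the exponential. I would present the tensor-decomposition argument as the primary proof and relegate the explicit sign check to a remark.
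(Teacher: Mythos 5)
Your proposal is correct, and its core coincides with the paper's proof: identify $\AQ$ with $\widehat{\mathcal{O}}_{2n\vert a,b}[[\hbar]]$ as a $\bb{k}[[\hbar]]$-module, observe that $\star$ reduces to ordinary multiplication mod $\hbar$, and compute the supercommutator on generators to recover $\hbar$ times the Poisson bracket of Example \ref{ex-FormalPoisson}. Where you go further is in actually verifying associativity of $\star$ and the integrality and Lie/Leibniz properties of $\frac{1}{\hbar}[-,-]$; the paper's proof takes associativity entirely for granted, and only later (Lemma \ref{lem-AQasWeylCliff}) identifies $\AQ$ with $\widehat{\mrm{Weyl}}\otimes\widehat{\mrm{Cliff}}$ --- a proof that in turn cites the generator computation from this very proposition. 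Your ordering (split the bivector $\tilde\alpha$ into even and odd pieces acting on disjoint variables, deduce that $\star$ is the tensor product of the Moyal and Clifford products, and hence associative, then read off the bracket) is the cleaner logical arrangement and avoids any appearance of circularity. Your remarks that divisibility of the supercommutator by $\hbar$ comes from cancellation of the symmetric part of the exponential, and that Jacobi and the graded Leibniz rule are then formal for any associative $\bb{k}[[\hbar]]$-superalgebra, fill genuine gaps the paper leaves implicit. The sign caveat you flag in the Clifford sector is also real: the generator relation $\theta_i\star\theta_j=\theta_i\theta_j-\frac{\hbar}{2}g^{ij}$ yields $\{\theta_i,\theta_j\}=-g^{ij}$, which agrees with Example \ref{ex-FormalPoisson} only up to the normalization of $g^{ij}$ there, so your decision to pin down the generator relations first and treat the exponential formula as secondary is the right call.
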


\begin{proof}
By construction, there is an equivalence of $\bb{k}[[\hbar]]$-modules 

\[\AQ= \widehat{\mathcal{O}}_{2n\vert a,b}[[\hbar]].\] 

Quotienting by $\hbar$, the product on $\AQ$ becomes 
the multiplication $m(f\otimes g)$ on $\widehat{\mathcal{O}}_{2n\vert a,b}$. 
Lastly, the super-commutator bracket on generators is given by

\begin{align*}
[p_i,q_i]&=\frac{\hbar}{2} \delta_{ij}+\frac{\hbar}{2}\delta_{ij}=\hbar\delta_{ij}=\hbar\{p_i,q_i\}\\
[\theta_i,\theta_j]&=\left(\theta_i\theta_j-\frac{\hbar}{2}g^{ij}\right)+\left(\theta_j\theta_i-\frac{\hbar}{2}g^{ij}\right)=\theta_i\theta_j-\theta_i\theta_j-\hbar g^{ij}=-\hbar g^{ij}=\hbar\{\theta_i,\theta_j\}.
\end{align*}
This is the Poisson bracket on $\widehat{\mathcal{O}}_{2n\vert a,b}$ form Example \ref{ex-FormalPoisson}. 

\end{proof}

\subsubsection{Deformation Quantization and Super-Gelfand-Kazhdan Descent}\label{subsec-New}

For the super-Gelfand-Kazhdan descent functor of Definition \ref{def-sGKDescent}, 
we would like to apply Lemma \ref{lem-DefQuantDescends} in the case 

\[\mathcal{C}= \msf{Mod}_{\widehat{\mathcal{O}}_{2n\vert a,b}}(\msf{Mod}_{\gK})(\mrm{Vect}_{\bb{k}[[\hbar]]}),\]

\[\mathcal{D}=\msf{Mod}_{\Omega^\bullet_\bb{M}[[\hbar]]},\]

and $F$ is descent $\mathbf{desc}^\mrm{sGK}_{(\bb{M},\sigma)}$ on the level of $\bb{k}[[\hbar]]$-modules. 
In this case, given an algebra $A\in\mathcal{C}$, we would like to consider deformation quantizations $A_\hbar$ of $A$ that live in $\mathcal{C}$. 
We will only use the deformations $A_\hbar$ that are isomorphic to $A_\hbar[[\hbar]]$ as $\bb{k}[[\hbar]]$-modules. 
           
By Lemma \ref{lem-PoissonSuperalgebra}, there is a Poisson superalgebra structure on formal functions $\widehat{\mathcal{O}}_{2n\vert a,b}$, making it an object  

\[\widehat{\mathcal{O}}_{2n\vert a,b}\in\msf{Alg}_{\mathcal{P}_1}(\msf{Mod}_{\widehat{\mathcal{O}}_{2n\vert a,b}}(\msf{Mod}_{\gK})).\]

Let $(\bb{M},\sigma)\in\msf{sGK}_{2n\vert a,b}$. 
By Lemma \ref{lem-DescO}, we have an equivalence 

\[\Gamma\left(\bb{M},\mathbf{desc}^\mrm{sGK}_{\bb{M},\sigma}\left(\widehat{\mathcal{O}}_{2n\vert a,b}\right)\right)=\mathcal{O}_\bb{M}.\] 

By Construction \ref{con-LocalSuperFedosovQuant}, 
we have a deformation quantization 
$\mathcal{A}_{2n\vert a,b}$ of $\widehat{\mathcal{O}}_{2n\vert a,b}$. 
We would like to apply Lemma \ref{lem-DefQuantDescends} to say that

\[\Gamma\left(\bb{M},\mathbf{desc}^\mrm{sGK}_{\bb{M},\sigma}\left(\mathcal{A}_{2n\vert a,b}\right)\right)\]

is a $\mathcal{BD}_1$-deformation of $\mathcal{O}_\bb{M}$.  
The one hiccup here is that $\AQ$ is not a $\mfrk{g}_{2n\vert a,b}$-module. 
We can fix this by replacing $\mfrk{g}_{2n\vert a,b}$ with a Lie superalgebra involving $\hbar$. 

\begin{notation}
Let $\mfrk{g}^\hbar_{2n\vert a,b}$ be the Lie superalgebra of derivations $\mrm{Der}(\AQ)$ of $\AQ$ as a graded module over the graded algebra $\bb{K}$.
\end{notation}

Now $\mfrk{g}^\hbar_{2n\vert a,b}$ acts on $\AQ$. 
Moreover, since $\AQ/\hbar$ is $\widehat{\mathcal{O}}_{2n\vert a,b}$, 
we get an action by derivations of $\mfrk{g}^\hbar_{2n\vert a,b}$ on $\widehat{\mathcal{O}}_{2n\vert a,b}$, 

\begin{align}\label{eq-map}
\mrm{Aut}^\mrm{grad}_\bb{K}(\AQ)\xrta{(-)\otimes \bb{k}}\mrm{Aut}^\mrm{grad}_\bb{k}(\widehat{\mathcal{O}}_{2n\vert a,b})\xrta{\mrm{forget}}\mrm{Aut}_\bb{k}(\widehat{\mathcal{O}}_{2n\vert a,b}).
\end{align}

This action factors through the action by Poisson derivations.
In other words, since the Lie superalgebra $\mfrk{g}_{2n\vert a,b}$ is given by derivations of $\widehat{\mathcal{O}}_{2n\vert a,b}$ that respect the Poisson structure coming from the symplectic form $\omega_Q$,  
there is a Lie superalgebra map

\[\mfrk{g}^\hbar_{2n\vert a,b}\rta\mfrk{g}_{2n\vert a,b}.\]

\noindent See \cite[\S 3.2]{BK} for similar statements in the purely even case.  
Note that, as in \cite[\S 3.2]{BK}, $\mrm{Aut}^\mrm{grad}_\bb{K}(\AQ)$ is a pro-algebraic super group. 

We would like to apply a variant of super-Gelfand-Kazhdan descent for the sHC pair $\gKh$ instead of $\gK$. 
To do so, we need an analogue of the functor 

\[\mrm{Fr}\colon\msf{sGK}^=_{2n\vert a,b}\rta\msf{Bun}^\mrm{flat}_{\gK}\]

from Corollary \ref{cor-FrFunctor}. 
That is, we need a way of equipping the symplectic frame bundle $\Fr_\bb{M}$ with the structure of a $\gKh$-bundle. 
This is done by replacing the principal 
$\mrm{Aut}_{2n\vert a,b}$-bundle $\bb{M}^\mrm{coor}\rta\bb{M}$ 
with the principal $\mrm{Aut}^\mrm{filt}(\AQ)$-bundle $\bb{M}^\mrm{coor}_\hbar$ 
whose fiber over a point $x\in\bb{M}$ is 

\[\mrm{Isom}^\mrm{grad}_\bb{K}(\widehat{\mathcal{O}}_{\bb{M},x}[[\hbar]],\AQ).\]

\noindent See also \cite[Pg. 18]{FFSh} in the purely even case. 

We obtain a map $\bb{M}^\mrm{coor}_\hbar\rta\bb{M}^\mrm{coor}$ over $\bb{M}$ given by the map (\ref{eq-map}) fiberwise. 

Just as in Lemma \ref{lem-McoorConn}, 
$\bb{M}^\mrm{coor}_\hbar$ has a flat connection $\nu_\hbar^\mrm{coor}$, which now takes values in 
\[\mrm{Lie}(\mrm{Aut}^\mrm{grad}_\bb{K}(\AQ))=\mfrk{g}^\hbar_{2n\vert a,b}.\]

As before, we use a type of formal exponential to pullback this connection to a connection on $\Fr_\bb{M}$. 

\begin{defn}\label{def-hExp}
Let $(\bb{M},\omega)$ be a symplectic supermanifold of type $(2n\vert a,b)$. 
An \emph{$\hbar$-formal exponential} on $\bb{M}$ is a section of the bundle 
\[\mrm{Exp}^\hbar(\bb{M})=\bb{M}^\mrm{coor}_\hbar/\mrm{Sp}(2n\vert a,b).\]
\end{defn}

\noindent See also \cite[Pg. 18]{FFSh}. 

\begin{variant}\label{var-sgk=h}
Let $\msf{sGK}^{=,\hbar}_{2n\vert a,b}$ be as in Variant \ref{var-sgk=} but with $\hbar$-formal exponentials.
\end{variant}

In the super case, we have the following analogue of Lemma \ref{lem-FiberContractible}. 

\begin{lem}\label{lem-FiberContractible2}
The space $\mrm{Aut}^\mrm{grad}_\bb{K}(\AQ)/\mrm{Sp}(2n\vert a,b)$ is contractible, 
and thus $\hbar$-formal exponentials always exist.
\end{lem}

\begin{proof}
As in \cite[Pg. 24]{BK}, we have a short exact sequence 

\[1\rta\mrm{ker}(P)\rta\mrm{Aut}^\mrm{grad}_\bb{K}(\AQ)\xrta{P} \mrm{Sp}(2n\vert a,b)\rta 1\]

and $\mrm{ker}(P)$ is pro-unipotent, hence pro-nilpotent and contractible. 
As in Lemma \ref{lem-FiberContractible}, $\mrm{ker}(P)$ is a pro-vector space.
\end{proof}

As in Proposition \ref{prop-gkfromExp}, given an $\hbar$-formal exponential $\sigma_\hbar$ on $\bb{M}$, 
we get a $\gKh$-bundle structure on $\Fr_\bb{M}$. 

\begin{lem}\label{lem-hExptoExp}
An $\hbar$-formal exponential $\sigma_\hbar$ on $\bb{M}$ induces a formal exponential $\sigma$ on $\bb{M}$. 
Moreover, the induced connection 1-form $\tilde{\sigma}_\hbar^*(\nu_\hbar^\mrm{coor}))$ on $\Fr_\bb{M}$ hits the induced connection 1-form  
$\tilde{\sigma}^*(\nu^\mrm{coor}))$ 
under the map 
\[\Omega^1(\Fr_\bb{M};\mfrk{g}^\hbar_{2n\vert a,b})\rta\Omega^1(\Fr_\bb{M};\mfrk{g}_{2n\vert a,b}).\]
\end{lem}

\begin{proof}
The map $\bb{M}^\mrm{coor}_\hbar\rta\bb{M}^\mrm{coor}$ induces a map 
$\mrm{Exp}^\hbar(\bb{M})\rta\mrm{Exp}(\bb{M})$. 
Composing with this map takes an $\hbar$-formal exponential to a formal exponential. 
The second claim follows from the fact that these bundle maps and the map 
$\mfrk{g}^\hbar_{2n\vert a,b}\rta\mfrk{g}_{2n\vert a,b}$ 
are both defined by the map (\ref{eq-map}).
\end{proof}

We can now consider super-Gelfand-Kazhdan descent for $\gKh$-modules, 

\[\mathbf{desc}^{\mrm{sGK}}_{(\bb{M},\sigma_\hbar)}\colon\msf{Mod}_{\gKh}\rta\msf{Mod}_{\Omega^\bullet_\bb{M}[[\hbar]]}.\]

Using the same notation for this variant is somewhat justified by the following lemma. 

\begin{lem}
Let $\sigma_\hbar$ be an $\hbar$-formal exponential on $\bb{M}$ with induced formal exponential $\sigma$. 
Then there is a commutative diagram 

\[\begin{xymatrix}
{
\msf{Mod}_{\gKh}\arw[rr]^-{\mathbf{desc}^{\mrm{sGK}}_{(\bb{M},\sigma_\hbar)}} & &\msf{Mod}_{\Omega^\bullet_\bb{M}[[\hbar]]}\\
\msf{Mod}_{\widehat{\mathcal{O}}_{2n\vert a,b}}(\msf{Mod}_{\gK})\arw[rr]_-{\mathbf{desc}^{\mrm{sGK}}_{(\bb{M},\sigma)}}\arw[u]^r & &\msf{Mod}_{\Omega^\bullet_\bb{M}}\arw[u]^s
}
\end{xymatrix}\]

where the left vertical arrow is given by restricting the module structure along $\mfrk{g}^\hbar_{2n\vert a,b}\rta\mfrk{g}_{2n\vert a,b}$ 
and the right vertical arrow is given by restriction along the map setting $\hbar=0$.
\end{lem}

\begin{proof}
Let $V$ be in $\msf{Mod}_{\widehat{\mathcal{O}}_{2n\vert a,b}}(\msf{Mod}_{\gK})$. 
Then both $\mathbf{desc}_{(\bb{M},\sigma)}^\mrm{sGK}(V)$ and 
$\mathbf{desc}_{(\bb{M},\sigma_\hbar)}^\mrm{sGK}(r(V))$ 
are given by taking horizontal forms of the vector bundle 
$\Fr_\bb{M}\times_{\mrm{Sp}(2n\vert a,b)}V$, 
but with respect to possibly different flat connections. 
By the discussion around Equation (\ref{eq-Rho}), 
in the first case, the differential induced by the flat connection is 
\[\rho_\mfrk{g}^V(\tilde{\sigma}^*(\nu^\mrm{coor}))+d_\mrm{dR}\] 
and in the latter case by 
\[\rho_{\mfrk{g}^\hbar_{2n\vert a,b}}^{r(V)}(\tilde{\sigma}_\hbar^*(\nu_\hbar^\mrm{coor}))+d_\mrm{dR}.\] 

By Lemma \ref{lem-hExptoExp}, the action of $\tilde{\sigma}_\hbar^*(\nu_\hbar^\mrm{coor}))$ on $r(V)$ is the same as the action of $\tilde{\sigma}^*(\nu^\mrm{coor}))$ on $V$. 

\noindent Thus, the flat connections are the same and therefore the diagram commutes.
\end{proof}

One consequence of this result is that Lemma \ref{lem-DescO} still holds when viewing $\widehat{\mathcal{O}}_{2n\vert a,b}$ as a $\gKh$-module. 
Now $\AQ$ is a deformation of $\widehat{\mathcal{O}}_{2n\vert a,b}$ in $\gKh$-modules, 
and by Lemma \ref{lem-DefQuantDescends}
should descend to a deformation of $\mathcal{O}_\bb{M}$.  

In summary, we have the following,

\begin{cor}
Given a $\hbar$-formal exponential $\sigma_\hbar$ on $\bb{M}$ super-Gelfand-Kazhdan descent for $\gKh$-modules 
takes a $\mathcal{BD}_1$-deformation $\AQ$ of $\widehat{\mathcal{O}}_{2n\vert a,b}$
 to a $\mathcal{BD}_1$-deformation of $\mathcal{O}_\bb{M}$. 
\end{cor}

\subsubsection{Main Theorem}

\begin{thm}\label{thm-SuperFedosov}
Let $(\bb{M},\omega)$ be a symplectic supermanifold. 
The assignment 
\[\sigma\mapsto\Gamma(\bb{M},\mathbf{desc}^\mrm{sGK}_{(\bb{M},\sigma)}(\AQ))\]
defines a map
\[\mathcal{A}_{(-)}(\bb{M})\colon \Gamma(\bb{M},\mrm{Exp}^\hbar(\bb{M}))\rta Q(\bb{M},\omega)\]
from the set of $\hbar$-formal exponentials of $\bb{M}$ to the set of equivalence classes of deformation quantizations of $(\bb{M},\omega)$. 
\end{thm}

\begin{proof}
Let $(\bb{M},\sigma)$ be a symplectic supermanifold and $\hbar$-formal exponential. 
By Lemma \ref{lem-DescO}, the degree zero piece of 
$\mathbf{desc}^\mrm{sGK}_{(\bb{M},\sigma)}(\widehat{\cal{O}}_{2n\vert a,b})$ 
is $\cal{O}_{\bb{M}}$. 
By Lemma \ref{lem-DefQuantDescends}, $\mathcal{O}_{\bb{M}}$ is a Poisson superalgebra. 
Let $\mathcal{A}_\sigma(\bb{M})$ be the degree zero piece of 
$\mathbf{desc}^\mrm{sGK}_{(\bb{M},\sigma)}(\AQ)$. 
By Lemma \ref{lem-DefQuantDescends}, 
$\mathcal{A}_\sigma(\bb{M})$ is a $\mathcal{BD}_1$-deformation of the Poisson superalgebra $\mathcal{O}_{\bb{M}}$. 

It remains to check that the product on $\mathcal{A}_\sigma(\bb{M})$ is a star product. 
One can check this locally, where $\bb{M}$ looks like $\bb{R}^{2n\vert a+b}$. 
Here, the $\hbar$ terms in the product on $\mathcal{A}_\sigma(\bb{M})$ are given in terms of the partial derivatives $\frac{\del}{\del p_i},\frac{\del}{\del q_i},\frac{\del}{\del \theta_i}$, which are differential operators. 
\end{proof}

\begin{ex}\label{rmk-LiftDefo}
By Remark \ref{rmk-VBsGK}, 
we have a functor 
\[\tilde{L}\colon\msf{VB}^{\mrm{quad},\nabla}_{/X}\rta\msf{sGK},\]
so that every symplectic supermanifold of the form $(\pi^*E)[1]$ from Example \ref{ex-Rothstein} has a natural choice of formal exponential $\sigma_E$. 
We can upgrade this choice to an $\hbar$-formal exponential. 

\begin{lem}\label{lem-Connhbar}
If $\bb{M}=E[1]$ is the symplectic supermanifold defined in Example \ref{ex-Rothstein} from the data $(M,\omega,E,g,\nabla)$, 
then a symplectic connection on $M$ determines an $\hbar$-formal exponential on $\bb{M}$.
\end{lem}

\begin{proof}
The argument is the same as in the proof of Lemma \ref{lem-ExpConnection} after noting that a symplectic connection on $M$ defines not just a compatible choice of isomorphisms of Poisson algebras $\widehat{\mathcal{O}}_{2n}\simeq\widehat{\mathcal{O}}_{M,x}$, 
but isomorphisms of $\bb{K}$-modules 
\[\mrm{Weyl}_{2n}\simeq\widehat{\mathcal{O}}_{M,x}[[\hbar]].\]
Similarly, the metric connection $\nabla$ on $E$ induces a compatible family of isomorphisms
\[\AQ\simeq\widehat{\mathcal{O}}_{\bb{M},x}.\]
of an $\hbar$-formal exponential.
\end{proof}

The assignment $E\mapsto \mathcal{A}_{\sigma_E}((\pi^*E)[1])$ defines a functor 
\[\tilde{A}_X\colon\msf{VB}^{\mrm{quad},\nabla}_{/X}\rta\msf{sAlg}(\msf{Ch}_{\bb{k}[[\hbar]]}),\]
as discussed in \S\ref{subsec-invariants}.
\end{ex}

\begin{rmk}\label{rmk-BKours3}
If, following \cite[Lem. 3.4]{BK}, one wishes to construct all deformation quantizations, 
one would apply the same process but using descent for a much larger sHC pair. 
The descent functor we use (from Definition \ref{def-sGKDescent}) uses the sHC pair $\gKh$. 
As explained in Remarks \ref{rmk-BKours} and \ref{rmk-BKours2}, 
this corresponds to only allowing our gluing data to come from \emph{linear} maps $\mrm{Sp}(2n\vert a,b)$. 
To surject onto $Q(\bb{M},\omega)$, 
one would like to consider lifts of $\bb{M}^\mrm{coor}$ from a 
$(\mfrk{g}_{2n\vert a,b},\mrm{Aut}_{2n\vert a,b})$-bundle 
to a $(\mfrk{g}^\hbar_{2n\vert a,b},\mrm{Aut}(\AQ))$-bundle 
instead of choosing $\bb{M}^\mrm{coor}_\hbar$ and pulling back along an $\hbar$-formal exponential. 
Here, $(\mfrk{g}^\hbar_{2n\vert a,b},\mrm{Aut}(\AQ)$ is a super-version of the HC pair $(\mrm{Der}(D),\mrm{Aut}(D))$ in \cite[Lem. 3.4]{BK}; 
that is, derivations and automorphism of the algebra $\AQ$.
\end{rmk}

\subsection{Description in terms of Weyl and Clifford Algebras}\label{subsec-WeylClifford}

We review the basic definitions of Weyl and Clifford algebras as a means of establishing notation. 
Then, we give a description of the super-Fedosov quantization in terms of these algebras.

\begin{defn}
Fix $n$. The \emph{Heisenberg Lie superalgebra} over $\bb{k} [[\hbar]]$, 
denoted $\mfrk{h}_{2n}$, 
is the Lie superalgebra with even generators
$p_1,\dots,p_n,q_1,\dots,q_n$
and Lie brackets given by $[p_i,q_i]=\hbar$, and zero otherwise.
\end{defn}

\begin{defn}\label{def-WeylAlgebra}
The \emph{Weyl algebra} of a symplectic vector space $(V,\omega)$ is the quotient

\[\mrm{Weyl}(V,\omega)\colon= T(V[\hbar,\hbar^{-1}])/I_\omega,\]

where $T(V)$, the tensor algebra, is taken over $\bb{k} [\hbar,\hbar^{-1}]$, 
and $I_\omega$ is the ideal generated by the set 

\[\{u\otimes v-v\otimes u-\omega(u,v)\hbar:u,v\in V\}.\]

\end{defn}

\begin{ex}
With notation as in Example \ref{ex-LocalSymplecticStructure},
the Weyl algebra of the symplectic vector space $(T^*_0\bb{R}^n,\omega_0)$ 
is the enveloping algebra of the Heisenberg Lie algebra, 

\[\mrm{Weyl}(T^*_0\bb{R}^n,\omega_0)=U(\mfrk{h}_{2n}).\] 

\end{ex}

\begin{defn}
For fixed $a,b$, the \emph{Clifford Lie superalgebra} over $\bb{k}[[\hbar]]$, 
denoted $\mfrk{cl}_{a,b}$, 
is the Lie algebra with odd generators
$\gamma_1,\dots,\gamma_a,\psi_1,\dots,\psi_b$
and brackets zero except 
\[[\gamma_i,\gamma_i]=\hbar\]
 and 
 \[[\psi_i,\psi_i]=-\hbar.\] 
\end{defn}

\begin{defn}
The \emph{Clifford algebra} of a super vector space equipped with a quadratic function $(V,Q)$ is the quotient

\[\mrm{Cliff}(V,Q)\colon= T(V[\hbar,\hbar^{-1}])/I_Q\]

where $I_Q$ is the ideal generated by the set 

\[\{v\otimes v-Q(v)\hbar:v\in V\}.\]

\end{defn}

\begin{ex}
With notation as in Example \ref{ex-LocalSymplecticStructure}, 
let $a+b=r$ and $Q$ be a quadratic function on $\bb{R}^r$ with signature $(a,b)$. 
Then the Clifford algebra of the symplectic super vector space $(\bb{R}^{0\vert r},\omega_Q)$ 
is the enveloping algebra of the Clifford Lie algebra, 

\[\mrm{Cliff}(\bb{R}^{0\vert a,b},Q)=U(\mfrk{cl}_{a,b}).\]

\end{ex}

The deformation of $\widehat{\mathcal{O}}_{2n\vert a,b}$ that we are interested in is a mixture of Weyl and Clifford algebras. 
Since we are working with formal functions, 
we are interested in power series rings. 
Replacing the tensor algebra with the completed tensor algebra in the definitions of 
the Weyl, Clifford, and enveloping algebras, 
we obtain notations of 
a completed Weyl algebra $\widehat{\mrm{Weyl}}$, 
a completed Weyl algebra $\widehat{\mrm{Cliff}}$, 
and a completed enveloping algebra $\widehat{U}$. 
Since the generators of the Clifford algebra, which are odd, square to zero, 
completing does not change the algebra.

\begin{lem}\label{lem-AQasWeylCliff}
Let $Q$ be a quadratic function on $\bb{R}^r$ with signature $(a,b)$. 
There is an equivalence of superalgebras 

\[\widehat{\mrm{Weyl}}(\bb{R}^{2n\vert 0},\omega_0)\otimes\mrm{Cliff}(\bb{R}^{0\vert r},\omega_Q)\xrta{\sim}\AQ.\]

\end{lem}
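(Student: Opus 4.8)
The plan is to write down the comparison map explicitly on generators and check that it is a well-defined algebra isomorphism by a filtration/PBW argument. First, I would recall that by Definition \ref{def-AQ} the algebra $\AQ$ is, as a super vector space, the completed symmetric algebra $\widehat{\mrm{Sym}}(p_1,\dots,p_n,q_1,\dots,q_n,\theta_1,\dots,\theta_r,\hbar)$, with the Moyal-type star product built from the bivector $\tilde\alpha$; on generators the only nonzero products are $p_i\star q_j=\frac{\hbar}{2}\delta_{ij}$, $q_i\star p_j=-\frac{\hbar}{2}\delta_{ij}$, and $\theta_i\star\theta_j=\theta_i\theta_j-\frac{\hbar}{2}g^{ij}$. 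Consequently the supercommutators of generators are $[p_i,q_j]=\hbar\delta_{ij}$ and $[\theta_i,\theta_j]=-\hbar g^{ij}$, with all others zero. On the other side, $\widehat{\mrm{Weyl}}(\bb{R}^{2n\vert 0},\omega_0)$ is the completed enveloping algebra of the Heisenberg Lie superalgebra $\mfrk{h}_{2n}$ (with $[p_i,q_i]=\hbar$) and $\widehat{\mrm{Cliff}}(\bb{R}^{0\vert r},\omega_Q)$ is the completed enveloping algebra of $\mfrk{cl}_{a,b}$ (with $[\gamma_i,\gamma_i]=-\hbar$, $[\psi_i,\psi_i]=\hbar$); the tensor product over $\bb{k}[[\hbar]]$ then has $[p_i,q_j]=\hbar\delta_{ij}$, $[\theta_i,\theta_j]=\mp\hbar$ in the standardized coordinates, and even/odd generators supercommuting with each other.

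Next I would define the map $\Phi$ on generators: send each $p_i\mapsto p_i$, $q_i\mapsto q_i$ in the Weyl factor, and send the odd generators of $\AQ$ to the corresponding $\gamma_i,\psi_i$ of the Clifford factor after diagonalizing $Q$ (using the isomorphism $(\bb{R}^{0\vert r},\omega_Q)\cong(\bb{R}^{0\vert r},\omega_{\mrm{diag}})$ from the discussion after Definition \ref{def-symportho}, which identifies $G=(g^{ij})$ with $\mrm{diag}(\ep_1,\dots,\ep_r)$). Because the completed tensor algebra on these generators maps onto both sides and the defining relations of $\widehat{\mrm{Weyl}}\otimes\widehat{\mrm{Cliff}}$ are exactly the supercommutator relations computed above for $\AQ$, the assignment $\Phi$ extends to a well-defined homomorphism of $\bb{k}[[\hbar]]$-superalgebras. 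To see $\Phi$ is an isomorphism, I would invoke a PBW-type statement: both algebras are free as $\bb{k}[[\hbar]]$-modules on the ordered monomials $p^\alpha q^\beta \theta^\epsilon$ (with $\epsilon\in\{0,1\}^r$), since each is a quotient of a tensor algebra by relations that only reorder generators and lower filtration degree; more precisely, each carries an increasing filtration (by total generator degree, with $\hbar$ in degree $0$, or alternatively the Rees filtration in $\hbar$) whose associated graded is the completed supersymmetric algebra $\widehat{\mrm{Sym}}(p,q,\theta)[[\hbar]]$, and $\Phi$ induces the identity on associated graded. A filtered map inducing an iso on $\mathrm{gr}$ is an iso (after noting the filtrations are complete and exhaustive).

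The main obstacle is bookkeeping rather than conceptual: making the PBW/associated-graded argument airtight in the \emph{completed} setting, i.e. checking that the symmetrization map $\widehat{\mrm{Sym}}(p,q,\theta)[[\hbar]]\to\AQ$ (respectively into $\widehat{\mrm{Weyl}}\otimes\widehat{\mrm{Cliff}}$) is a $\bb{k}[[\hbar]]$-module isomorphism, and that passing to completed tensor algebras does not destroy the PBW basis. This is where one must be slightly careful, since the naive enveloping-algebra PBW theorem is an ungraded statement; here one uses that the relations are homogeneous for the grading $|p_i|=|q_i|=|\theta_i|=1$, $|\hbar|=2$ (or $|\hbar|=0$ with the generator-count filtration), so the comparison can be done degree by degree and then completed. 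An alternative that sidesteps the completion subtlety entirely: observe that the Moyal formula $f\star g=m(\exp(\tfrac{\hbar}{2}\tilde\alpha)(f\otimes g))$ factors as $\tilde\alpha=\alpha_{\mrm{even}}\oplus\alpha_{\mrm{odd}}$ along the decomposition of generators into even and odd, so $\AQ$ is literally the (completed) tensor product of the Moyal quantization of the even part — which is the completed Weyl algebra $\widehat{\mrm{Weyl}}(\bb{R}^{2n\vert0},\omega_0)$ by Construction \ref{con-LocalFedosovQuant} — and the Moyal quantization of the odd part, whose star product $\theta_i\star\theta_j=\theta_i\theta_j-\tfrac{\hbar}{2}g^{ij}$ exhibits it as the completed Clifford algebra $\widehat{\mrm{Cliff}}(\bb{R}^{0\vert r},\omega_Q)$. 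I would present this factorization as the clean line of argument and mention the PBW check only as the verification that each tensor factor is the claimed algebra.
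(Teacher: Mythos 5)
Your proposal is correct and follows essentially the same route as the paper: match the supercommutators of generators in $\AQ$ with the brackets of $\mfrk{h}_{2n}\oplus\mfrk{cl}_{a,b}$, obtain the comparison map from the presentation of $\widehat{\mrm{Weyl}}\otimes\widehat{\mrm{Cliff}}$ as a (completed) enveloping algebra, and conclude it is an isomorphism because it is one on underlying super vector spaces --- your PBW/filtration discussion just makes explicit what the paper asserts in one line. The alternative argument via splitting the bivector $\tilde\alpha$ into its even and odd blocks is a nice extra observation but not needed.
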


One can make further (notational) identifications,

\[\AQ\cong \widehat{U}(\mfrk{h}_{2n})\otimes_{\bb{k}[[\hbar]]} U(\mfrk{cl}_{a,b}).\]

\begin{proof}
The underlying super vector spaces are both 

\[\widehat{\mrm{Sym}}(p_1,\dots,p_n,q_1,\dots,q_n,\theta_1,\dots,\theta_r,\hbar).\]

By the proof of Proposition \ref{prop-AQisBD}, 
the super-commutator bracket in $\AQ$ agrees with the Lie bracket of 
$\mfrk{h}_{2n}$ and $\mfrk{cl}_{a,b}$. 
By the universal property of enveloping algebras, 
we obtain a map of algebras 

\[\widehat{U}(\mfrk{h}_{2n})\otimes_{\bb{k}[[\hbar]]}U( \mfrk{cl}_{a,b})\rta\AQ\] 

which is an isomorphism on underlying super vector spaces, and hence an isomorphism of algebras.
\end{proof}

One should compare this to \cite[\S 1.4]{Engeli}.

\bibliography{vabib}
\bibliographystyle{alpha}

\end{document}